\newtheorem{thm}{Theorem}
\newtheorem*{thm*}{Theorem}
\newtheorem*{thmA*}{Theorem A}
\newtheorem*{thmB*}{Theorem B}
\newtheorem*{thmC*}{Theorem C}
\newtheorem{cor}{Corollary}
\newtheorem{prop}{Proposition}
\newtheorem{lem}{Lemma}
\theoremstyle{definition}
\newtheorem*{defn}{Definition}
\newtheorem*{exmp*}{Example}
\newcommand{\conv}{\mathop{\rm conv}}
\newcommand{\inter}{\mathop{\rm int}}
\newcommand{\relint}{\mathop{\rm relint}}
\newcommand{\rk}{\mathop{\rm rk}}
\newcommand{\orb}{\mathop{\rm orb}}
\newcommand{\Cone}{\mathop{\rm Cone}}
\title{The Hartogs extension phenomenon in toric varieties}
\author{Sergey Feklistov, Alexey Shchuplev}
\date{\today}
\begin{document}

\begin{abstract}
We study the Hartogs extension phenomenon in non-compact toric varieties and its relation to the first cohomology group with compact support. We show that a toric variety  admits this phenomenon if at least one connected component of the fan complement is concave, proving by this an earlier conjecture M. Marciniak.
\end{abstract}

\maketitle

\section{Introduction}
\label{s:0}
The classical Hartogs extension theorem states that for every domain $D\subset\mathbb{C}^{n}(n>1)$ and a compact set $K\subset D$ such that $D\setminus K$ is connected, any holomorphic function  $f$ on $D\setminus K$ extends holomorphically to $D$. A natural question arises if this is true for complex analytic spaces. 

\begin{defn}
\emph{We say that a connected complex space $X$ admits the Hartogs phenomenon if for any domain $D\subset X$ and a compact set $K\subset D$ such that $D\setminus K$ is connected, the restriction homomorphism $$H^{0}(D,\mathcal{O})\to H^{0}(D\setminus K, \mathcal{O})$$ is an isomorphism. }
	\end{defn}
In this or a similar formulation this phenomenon has been extensively studied in many situations, including Stein manifolds and spaces, $(n-1)$-complete normal complex spaces and so on \cite{Andersson,AndrGrau,AndrHill,AndrVesen,BanStan,ColtRupp,Harvey,Merker,Vassiliadou,Rossi}.

Our goal is to study the Hartogs phenomenon in toric varieties. Toric varieties is a class of algebraic varieties that are rather easy to construct. Locally, they are algebraic sets defined by systems of binomial equations which are glued together via monomial mappings. This allows to describe their structure and properties in purely combinatorial terms; each $p$-dimensional toric variety $X$ is encoded by a fan $\Sigma$ --- a collection of cones  in $\mathbb R^p$ with the common apex that may intersect only along their common face. The toric geometry arises naturally in constructions involving monomial functions or curves, notably in resolution of singularities or asymptotics, e.g. in the theory of residue currents (see, e.g., \cite{STY,Tsikh, Alekos}).

In the context of toric varieties the Hartogs phenomenon was first studied by M.~A.~Marciniak in her thesis \cite{Marci} and a paper in this journal \cite{Marci3}. She was able to prove the global version of the Hartogs phenomenon (with $D=X$):
	\begin{thm*}
	Let $X_{\Sigma}$ be a smooth toric surface or the total space of a smooth toric line bundle over a compact base. If the support $|\Sigma|$ of $\Sigma$ is a strictly convex cone then for any compact set $K\subset X_\Sigma$ such that $X_{\Sigma}\setminus K$ is connected, the restriction homomorphism $H^{0}(X_{\Sigma},\mathcal{O})\to H^{0}(X_{\Sigma}\setminus K, \mathcal{O})$ is an isomorphism.
	\end{thm*}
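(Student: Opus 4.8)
The plan is to convert the isomorphism assertion into the vanishing of one cohomology group with compact support and then to verify that vanishing by toric combinatorics. Throughout write $X=X_\Sigma$, $p=\dim_{\mathbb C}X$, and note that $X$ is smooth and noncompact. Injectivity of the restriction map is immediate: if $f\in H^0(X,\mathcal O)$ vanishes on $X\setminus K$, then it vanishes on a nonempty open subset of the connected manifold $X$ (nonempty since $K$ is compact and $X$ is not), hence $f\equiv0$ by the identity principle. The substance is surjectivity, for which I would prove the cohomological criterion: \emph{if $H^1_c(X,\mathcal O)=0$, then every $f\in H^0(X\setminus K,\mathcal O)$ extends across $K$.} Choose $\chi\in C^\infty(X)$ that is $0$ near $K$ and $1$ outside a compact $K'\supset K$, and set $\tilde f=\chi f$ (extended by $0$ over $K$). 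Then $\bar\partial\tilde f=(\bar\partial\chi)f$ is a $\bar\partial$-closed $(0,1)$-form supported in the compact set $K'\setminus K$, so it determines a class in $H^{0,1}_{c,\bar\partial}(X)\cong H^1_c(X,\mathcal O)$. If this group vanishes, write $\bar\partial\tilde f=\bar\partial u$ with $u\in C^\infty_c(X)$; then $F:=\tilde f-u$ is holomorphic on $X$, agrees with $f$ outside $K'\cup\operatorname{supp}u$, and hence (by connectedness of $X\setminus K$ and the identity principle) agrees with $f$ on all of $X\setminus K$, giving the extension.

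So everything reduces to showing $H^1_c(X,\mathcal O)=0$ when $|\Sigma|$ is a strictly convex cone. I would pass to ordinary cohomology by Serre duality on the smooth $p$-fold $X$, namely $H^1_c(X,\mathcal O)\cong H^{p-1}(X,\Omega^p_X)^\vee=H^{p-1}(X,K_X)^\vee$, where $K_X=\mathcal O(-\sum_\rho D_\rho)$ is the canonical sheaf. On the noncompact $X$ this duality must be handled as a pairing of topological vector spaces, so I would check that the relevant cohomology is separated, making the pairing perfect; this is the analytic point to be careful about.

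For the total space $\pi\colon X=\operatorname{Tot}(L)\to B$ of a line bundle over a compact smooth toric base $B$ ($\dim B=p-1$), the map $\pi$ is affine, so $R^{>0}\pi_*\mathcal O_X=0$ and $\pi_*\mathcal O_X=\bigoplus_{k\ge0}L^{-k}$; moreover $K_X=\pi^*(K_B\otimes L^{-1})$. Leray and the projection formula then give $H^{p-1}(X,K_X)\cong\bigoplus_{k\ge1}H^{p-1}(B,K_B\otimes L^{-k})$, and Serre duality on the compact $B$ turns this into $\bigoplus_{k\ge1}H^0(B,L^{k})^\vee$; hence $H^1_c(X,\mathcal O)\cong\bigoplus_{k\ge1}H^0(B,L^{k})$. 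The geometry now enters combinatorially: the fan of $\operatorname{Tot}(L)$ encodes $L$ through the support function of its defining divisor, and $|\Sigma|$ being strictly convex is exactly the statement that this support function is strictly convex, i.e. that $L$ is negative, whence $H^0(B,L^{k})=0$ for all $k\ge1$. (The case $L=\mathcal O_{\mathbb P^1}(-1)$, whose total space is the blow-up of $\mathbb C^2$ at the origin with support the first quadrant, confirms both the sign convention and the conclusion.) For a smooth toric surface ($p=2$) I would instead compute $H^1(X,K_X)$ directly: each graded piece $H^1(X,K_X)_m$ is a reduced cohomology $\tilde H^0(V_m)$ of an explicit polyhedral subset $V_m\subset|\Sigma|$ determined by $m$ and the support function of $K_X$, so vanishing amounts to connectedness (or emptiness) of every $V_m$, which is precisely what strict convexity of the two-dimensional cone $|\Sigma|$ guarantees.

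The main obstacle I expect is this final combinatorial vanishing together with the precise dictionary translating ``$|\Sigma|$ strictly convex'' into negativity of $L$ (line-bundle case) and into connectedness of the sets $V_m$ (surface case); the $\bar\partial$-criterion and the projection-formula reduction are comparatively routine. The secondary technical point is the justification of Serre duality as a perfect pairing on the noncompact $X$, which I would address by verifying separatedness of the cohomology in the degree at issue.
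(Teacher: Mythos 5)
Your overall architecture --- reduce the Hartogs phenomenon to $H^{1}_{c}(X,\mathcal O)=0$ and then establish that vanishing from the combinatorics of $\Sigma$ --- is also the paper's architecture, but both halves are carried out differently. The paper does not reprove Marciniak's theorem chart by chart; it obtains it as a special case of Theorem C: if $|\Sigma|$ is a strictly convex cone then the complement $C=\mathbb R^{p}\setminus|\Sigma|$ is connected and $\conv(\overline C)=\mathbb R^{p}$, so Theorem A gives $H^{1}_{c}(X_{\Sigma},\mathcal O)=0$ and Theorem B gives the extension. Your $\bar\partial$ (Ehrenpreis) argument is a correct smooth-case substitute for the one implication of Theorem B that you need. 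The genuine divergence is in the vanishing: the paper never invokes duality. It compactifies $X_{\Sigma}\subset X_{\Sigma''}$ equivariantly and uses the exact sequence of the pair together with $H^{>0}(X_{\Sigma''},\mathcal O)=0$ to identify $H^{1}_{c}(X_{\Sigma},\mathcal O)\cong H^{0}(Z,\mathcal O)/\mathbb C$, where $Z$ is the compact boundary; germs of holomorphic functions along $Z$ are Laurent series with exponents in $\overline C^{\vee}\cap M$, and this space reduces to the constants precisely when $\overline C^{\vee}=\{O\}$, i.e.\ when $\conv(\overline C)=\mathbb R^{p}$. This is elementary, requires no topology on cohomology groups, and survives passage to normal (singular) varieties via equivariant resolution.

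The gap in your route is not primarily the one you flag. Separatedness is manageable: injectivity of the natural map $H^{1}_{c}(X,\mathcal O)\to H^{p-1}(X,\Omega^{p})'$ requires $H^{p}(X,\Omega^{p})$ to be separated, and that group vanishes on a noncompact $p$-dimensional manifold (Malgrange/Siu), so vanishing of $H^{p-1}(X,\Omega^{p})$ would indeed force $H^{1}_{c}(X,\mathcal O)=0$. The problem is that the group you then compute is the \emph{algebraic} one. The identities $\pi_{*}\mathcal O_{X}=\bigoplus_{k\ge0}L^{-k}$ and $H^{1}(X,K_{X})_{m}\cong\tilde H^{0}(V_{m})$ are Zariski-cohomology statements; on a noncompact variety there is no GAGA, and analytically $\pi_{*}\mathcal O_{X}$ is a completed sum (holomorphic functions on the fibres are convergent series, not polynomials, in the fibre variable), so the analytic $H^{p-1}(X,\Omega^{p})$ that Serre duality pairs with $H^{1}_{c}$ is not literally $\bigoplus_{k\ge1}H^{p-1}(B,K_{B}\otimes L^{-k})$, and the graded decomposition in the surface case carries the same caveat. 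This can be repaired (for instance by averaging Dolbeault representatives over the compact torus to reduce to isotypic components), but as written the vanishing of the algebraic group does not yet yield the vanishing of the analytic one. Your combinatorial dictionary (strict convexity of $|\Sigma|$ versus negativity of $L$, and connectedness of the sets $V_{m}$) is the right one and does check out; note only that the paper's criterion is sharper --- all that is needed is $\conv(\overline C)=\mathbb R^{p}$, not convexity of $|\Sigma|$ itself.
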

To prove the theorem one needs to know the transition functions between coordinate charts to construct holomorphic extensions of a function given in one chart to all others. Since the transition functions are given by the cones of maximal dimensions in the fan, this yields the restrictions on $\Sigma$. However, this approach can hardly be generalised to higher dimensions because of increasing combinatorial difficulties. Nevertheless, Marciniak has formulated the following   

\noindent {\bfseries Conjecture \cite{Marci}}. \textit{Let $X_\Sigma$ be a smooth toric variety. If the complement of $|\Sigma|$ has at least one concave connected component then $X_\Sigma$ admits the global Hartogs phenomenon.}

We shall follow a more general approach that goes back to Serre \cite{Serre}. First we prove the result about vanishing cohomology
 	\begin{thmA*}
	Let $X_{\Sigma'}$ be a $p$-dimensional toric variety with the fan $\Sigma'$. Assume that the complement of the fan's support  $C:=\mathbb{R}^{p}\setminus |\Sigma'|$ is connected, then  $H^{1}_{c}(X_{\Sigma'},\mathcal{O})=0$ if and only if $\conv(\overline{C})=\mathbb{R}^{p}$.
	\end{thmA*}
This allows us to specify what concavity in the conjecture formulated above means. Let $\Sigma$ be a fan in $\mathbb{R}^{p}$, and $\mathbb{R}^{p}\setminus |\Sigma|$ be its complement. The complement may have several connected components $\mathbb{R}^{p}\setminus |\Sigma|=\bigsqcup\limits_{j} C_{j}$.
	\begin{defn}
A complement component $C_{j}$ is called concave if $$\conv(\overline{C_j})=\mathbb{R}^{p}.$$
	\end{defn}	

Then we use the toric version of Serre's theorem
\begin{thmB*}
 	Let $X_{\Sigma}$ be a non-compact normal toric variety with the complement $\mathbb{R}^{p}\setminus |\Sigma|$ being connected. The cohomology group $H^{1}_{c}(X_{\Sigma},\mathcal{O})$ is trivial if and only if  $X_{\Sigma}$ admits the Hartogs phenomenon.
 \end{thmB*}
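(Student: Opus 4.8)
The plan is to realize Theorem B as the toric incarnation of Serre's duality-free mechanism relating extension across compacta to cohomology with compact support, the genuinely new ingredient being a vanishing special to the toric setting. Throughout write $U_K := X_\Sigma \setminus K$ and recall that, since $X_\Sigma$ is normal and connected (hence irreducible) and $U_K$ is connected, holomorphic functions enjoy unique continuation; thus $H^0_K(X_\Sigma,\mathcal O)=0$ and, more generally, $H^0_{K'\setminus K}(U_K,\mathcal O)=0$ whenever $U_K$ is connected.

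First I would assemble the local-cohomology long exact sequence
$$0\to H^0(X_\Sigma,\mathcal O)\xrightarrow{\ r\ } H^0(U_K,\mathcal O)\xrightarrow{\ \delta\ } H^1_K(X_\Sigma,\mathcal O)\to H^1(X_\Sigma,\mathcal O)\to H^1(U_K,\mathcal O)\to\cdots$$
together with the excision isomorphism $H^1_K(D,\mathcal O)\cong H^1_K(X_\Sigma,\mathcal O)$ for every open $D\supset K$, so that the obstruction to extending a given $f\in H^0(D\setminus K,\mathcal O)$ across $K$ is the single class $\delta(f)\in H^1_K(X_\Sigma,\mathcal O)$. Next I would pass to the direct limit over an exhaustion by compacta $K$ with $U_K$ connected, which is cofinal because the hypothesis that $\mathbb R^p\setminus|\Sigma|$ is connected forces $X_\Sigma$ to have a single end. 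Using $H^0_{K'\setminus K}(U_K,\mathcal O)=0$ and the long exact sequence of the pair $(K',K)$, the transition maps $H^1_K\to H^1_{K'}$ are injective, whence $H^1_c(X_\Sigma,\mathcal O)=\varinjlim_K H^1_K(X_\Sigma,\mathcal O)$ and each $H^1_K$ injects into $H^1_c$.

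With this in place the two implications split. For $H^1_c=0\Rightarrow$ Hartogs, the injectivity just established gives $H^1_K(X_\Sigma,\mathcal O)=0$ for every $K$ with connected complement; by excision the obstruction $\delta(f)$ vanishes for every admissible pair $(D,K)$ with $D\setminus K$ connected, so every such $f$ extends and the restriction map is an isomorphism, injectivity being automatic from unique continuation. For the converse I would specialize the definition to $D=X_\Sigma$: the Hartogs property makes $r$ surjective for all $K$, i.e. $\delta=0$, so $H^1_K(X_\Sigma,\mathcal O)\hookrightarrow H^1(X_\Sigma,\mathcal O)$ and, in the limit, the canonical map $H^1_c(X_\Sigma,\mathcal O)\to H^1(X_\Sigma,\mathcal O)$ is injective.

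The crux, and the only step that is genuinely toric, is to upgrade this injectivity to vanishing: for a non-compact toric variety with connected fan complement the canonical map $H^1_c(X_\Sigma,\mathcal O)\to H^1(X_\Sigma,\mathcal O)$ must be identically zero, and injective together with zero forces $H^1_c=0$. Equivalently, every ordinary class in $H^1(X_\Sigma,\mathcal O)$ should be detected near infinity, i.e. $H^1(X_\Sigma,\mathcal O)\to\varinjlim_K H^1(U_K,\mathcal O)$ is injective. I expect this to be the main obstacle, and I would attack it through the $T$-action: both groups carry the $M$-grading by torus characters, the map is graded, and on each graded piece the cohomology is governed by the combinatorics of $\Sigma$ and of its complement, the same local-cohomology-of-fans bookkeeping that underlies Theorem A. The point to extract is that, because $X_\Sigma$ is non-compact and $\mathbb R^p\setminus|\Sigma|$ is connected, no compactly supported class survives to the graded ordinary cohomology. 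Granting this vanishing, Theorem B follows; I would finally note that the general-domain formulation of the definition reduces to the case treated here by the excision isomorphism $H^1_K(D,\mathcal O)\cong H^1_K(X_\Sigma,\mathcal O)$, the connectivity of $D\setminus K$ being precisely what makes $\delta$ the sole obstruction.
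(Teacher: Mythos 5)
Your forward implication is essentially sound and close in spirit to the paper's: the injectivity of the transition maps $H^{1}_{V_n}(X_{\Sigma},\mathcal{O})\to H^{1}_{V_{n+1}}(X_{\Sigma},\mathcal{O})$ via unique continuation, combined with $H^{1}_{c}\cong\varinjlim H^{1}_{V_n}$ and excision, does yield the extension across $K$ once $H^{1}_{c}=0$. (One small caveat: for an arbitrary admissible pair $(D,K)$ you need $H^{1}_{K}(X_{\Sigma},\mathcal{O})=0$, which you obtain by injecting $H^{1}_{K}$ into some $H^{1}_{V_n}$; your unique-continuation argument for that injectivity uses connectedness of $X_{\Sigma}\setminus K$, which is not literally the hypothesis that $D\setminus K$ is connected, so a word is needed there.)

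The converse, however, has a genuine gap, and you have flagged it yourself: you reduce everything to the claim that the canonical map $H^{1}_{c}(X_{\Sigma},\mathcal{O})\to H^{1}(X_{\Sigma},\mathcal{O})$ is zero and then write that, granting this vanishing, the theorem follows. That claim is precisely the hard content and is not established; moreover the proposed attack through the $M$-grading is doubtful in the analytic category, where $H^{1}(X_{\Sigma},\mathcal{O})$ of a non-compact toric variety is not a direct sum of weight spaces --- convergence of the character expansions is exactly the delicate point the paper has to control in Lemma \ref{mainlemm}. The paper sidesteps this entirely: it passes to a $T_{N}$-equivariant compactification $X_{\Sigma'}\supset X_{\Sigma}$ (Corollary \ref{corequvcompact}), uses the vanishing $H^{q}(X_{\Sigma'},\mathcal{O})=0$ for $q\geq 1$ on compact toric varieties together with excision $H^{1}_{K}(X_{\Sigma},\mathcal{O})\cong H^{1}_{K}(X_{\Sigma'},\mathcal{O})$, and thereby identifies $H^{1}_{K}(X_{\Sigma},\mathcal{O})\cong H^{0}(X_{\Sigma'}\setminus K,\mathcal{O})/\mathbb{C}$ exactly, with no ambient $H^{1}$ term left over. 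The converse then becomes elementary: given $f$ holomorphic on $X_{\Sigma'}\setminus K$, the Hartogs property on a domain $D\supset K$ in $X_{\Sigma}$ extends $f|_{D\setminus K}$ to $D$, and gluing over $X_{\Sigma'}=(X_{\Sigma'}\setminus K)\cup D$ produces a global holomorphic function on the compact $X_{\Sigma'}$, hence a constant; so $H^{0}(X_{\Sigma'}\setminus K,\mathcal{O})=\mathbb{C}$, each $H^{1}_{V_n}$ vanishes, and $H^{1}_{c}=0$ in the limit. You should replace the unproven ``crux'' by this compactification argument; as written, your proof of the ``only if'' direction is incomplete.
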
 
\noindent to prove the main result from which Marciniak's conjecture follows
	 
 	\begin{thmC*}
Let $X_{\Sigma}$ be a normal non-compact toric variety with the fan $\Sigma$ whose complement is $\mathbb{R}^{p}\setminus |\Sigma|=\bigsqcup\limits_{j=1}^{n}C_{j}$. Then
	\begin{itemize}
	\item if at least one of $C_j$'s is concave then $X_{\Sigma}$ admits the Hartogs phenomenon.
	\item if $n=1$ then the converse is also true, i.e. if $X_{\Sigma}$ admits the Hartogs phenomenon then $\mathbb{R}^{p}\setminus |\Sigma|$ is concave.
	\end{itemize}  
	\end{thmC*}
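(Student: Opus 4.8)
The plan is to reduce everything to the two earlier theorems, both of which require the complement of the fan's support to be connected: Theorem B converts the Hartogs phenomenon into the vanishing of $H^1_c$, and Theorem A converts that vanishing into concavity of the (connected) complement. First I would dispose of the second bullet, the case of a connected complement $\mathbb{R}^p\setminus|\Sigma|=C_1$ (i.e. $n=1$). Here the hypotheses of both theorems hold, so Theorem B gives that $X_\Sigma$ admits the Hartogs phenomenon if and only if $H^1_c(X_\Sigma,\mathcal{O})=0$, and Theorem A gives that this vanishing is equivalent to $\conv(\overline{C_1})=\mathbb{R}^p$, that is, to $C_1$ being concave. Chaining the two equivalences proves both implications of the $n=1$ statement simultaneously, and in particular the first bullet for $n=1$.

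For the first bullet with $n>1$, suppose some $C_j$ is concave. The idea is to build a new fan $\widehat\Sigma$ having $\Sigma$ as a subfan and whose support is all of $\mathbb{R}^p$ except that one concave component, $|\widehat\Sigma|=\mathbb{R}^p\setminus C_j$. Since $|\Sigma|$ is invariant under positive scaling, each component $C_i$ is as well, so $\overline{C_i}$ is a closed polyhedral cone region and $\mathbb{R}^p\setminus C_j=|\Sigma|\cup\bigcup_{i\ne j}\overline{C_i}$ is a finite, scaling-invariant union of polyhedral cones. I would subdivide the closures $\overline{C_i}$ with $i\ne j$ into rational cones compatibly with the faces of $\Sigma$ lying on their boundaries, keeping every cone of $\Sigma$ unchanged; this guarantees $\Sigma\subseteq\widehat\Sigma$, hence that $X_\Sigma$ is an open toric subvariety of the normal variety $X_{\widehat\Sigma}$. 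By construction $\mathbb{R}^p\setminus|\widehat\Sigma|=C_j$, which is connected and, being concave, satisfies $\conv(\overline{C_j})=\mathbb{R}^p$; in particular $C_j\ne\emptyset$, so $X_{\widehat\Sigma}$ is non-compact. Theorem A then gives $H^1_c(X_{\widehat\Sigma},\mathcal{O})=0$, and Theorem B yields that $X_{\widehat\Sigma}$ admits the Hartogs phenomenon.

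It remains to transport the phenomenon from $X_{\widehat\Sigma}$ to its open subvariety $X_\Sigma$, and this step is purely formal. The Hartogs property is a statement about pairs $(D,K)$ with $D$ a domain and $K\subset D$ compact such that $D\setminus K$ is connected, and the asserted isomorphism $H^0(D,\mathcal{O})\to H^0(D\setminus K,\mathcal{O})$ refers only to $D$ and $D\setminus K$, not to the ambient space. Every such pair with $D\subset X_\Sigma$ is also such a pair in $X_{\widehat\Sigma}$, so the phenomenon for $X_{\widehat\Sigma}$ restricts verbatim to the phenomenon for the open subvariety $X_\Sigma$. This completes the first bullet.

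The hard part will be the fan-extension step: one must verify that the non-concave components $\overline{C_i}$ can genuinely be filled by rational polyhedral cones that, together with the cones of $\Sigma$, form an honest fan (cones meeting only along common faces) with $X_{\widehat\Sigma}$ normal. I expect this to follow from standard partial-completion techniques for fans --- coning over a common refinement, on the unit sphere, of the links of the $\overline{C_i}$ that agrees with the trace of $\Sigma$ on the shared boundary --- but ensuring compatibility along those boundary facets while preserving $\Sigma$ as a subfan is the delicate point and should be carried out explicitly.
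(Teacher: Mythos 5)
Your proposal follows essentially the same route as the paper: embed $X_\Sigma$ as an open toric subvariety of a larger toric variety whose fan complement is exactly the chosen concave component, apply Theorem A to get $H^1_c=0$, Theorem B to get the Hartogs phenomenon there, restrict to the open subvariety, and for $n=1$ chain Theorems A and B to get the converse. The fan-extension step you flag as delicate is precisely the point the paper itself passes over with a one-line assertion ("can be embedded into a toric variety $X_{\Sigma'}$ such that $\mathbb{R}^p\setminus|\Sigma'|=C_1$"), and it does follow from the equivariant-completion result (Corollary~\ref{corequvcompact}) together with a selection/subdivision of cones lying in $\mathbb{R}^p\setminus C_j$, much as you sketch.
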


\section{Toric varieties}

In this section we briefly review the necessary elements of the theory of toric varieties. Here we follow \cite{Cox} and \cite{Oda}.

\subsection{Lattices and cones}	

	Consider a lattice $N$ of rank $r$ and  the dual lattice  $M:=Hom_{\mathbb{Z}}(N,\mathbb{Z})$ with the canonical $\mathbb{Z}$-bilinear product $$\langle-,-\rangle\colon M\times N\to \mathbb{Z}, \langle l,x\rangle:=l(x).$$ We define scalar extensions  $$N_{\mathbb{R}}:=N\otimes_{\mathbb{Z}}\mathbb{R}, M_{\mathbb{R}}:=M\otimes_{\mathbb{Z}}\mathbb{R}$$ and obtain  the canonical $\mathbb{R}$-bilinear product $$\langle-,-\rangle\colon M_{\mathbb{R}}\times N_{\mathbb{R}}\to \mathbb{Z}, \langle l,x\rangle:=l(x).$$
	
	\begin{defn}
A subset $\sigma\subset N_{\mathbb{R}}$ is a rational polyhedral cone with the apex at the origin $O\in N_{\mathbb{R}}$ if there is a finite number of  $n_{1},n_{2},\cdots, n_{s}\in N$ such that $$\sigma=\{a_{1}n_{1}+\cdots a_{s}n_{s}\mid a_{i}\in\mathbb{R},a_{i}\geq0,\forall\,i=1,\cdots,s\}.$$ 
	A rational polyhedral cone  $\sigma$ is strictly convex if  $\sigma\cap(-\sigma)=\{O\}$.
	\end{defn}
Throughout the paper, unless stated explicitly, we shall refer to rational polyhedral cones simply as cones. A cone generated by the list $\{n_{1},n_{2},\cdots, n_{s}\}$ we denote as $\Cone(n_{1},n_{2},\cdots, n_{s})$. 
	
	\begin{defn}\indent
	\begin{itemize}
	\item The dimension $\dim\sigma$ of the cone $\sigma$ is the dimension of the smallest subspace $\mathbb{R}(\sigma)\subset N_{\mathbb{R}}$ containing  $\sigma$.
	\item The dual cone $\sigma^{\vee}\subset M_{\mathbb{R}}$ for $\sigma$ is defined as $$\sigma^{\vee}:=\{x\in M_{\mathbb{R}}\mid\langle x,y\rangle\geq0,\forall\,y\in\sigma\}.$$
	\item A face $\tau$ of $\sigma$ (denoted as $\tau\prec\sigma$) is the set $$\tau:=\sigma\cap H_{m_{0}}$$ for some $m_{0}\in\sigma^{\vee}$ where $H_{m_{0}}:=\{y\in N_{\mathbb{R}}\mid \langle m_{0},y\rangle=0\}.$
	\end{itemize}	
	\end{defn}

A strictly convex rational polyhedral cone $\sigma\subset N_{\mathbb{R}}$ defines a semigroup $S_{\sigma}:=\sigma^{\vee}\cap M$. Its properties are summarized in the following
	\begin{prop}\cite[Proposition 1.1]{Oda}
	Let $\sigma\subset N_{\mathbb{R}}$ be a strictly convex rational polyhedral cone. Then the following hold
	\begin{enumerate}
	\item $S_{\sigma}\subset M$ is an additive subsemigroup containing the origin, i.e. $O\in S_{\sigma}$ and for all $m,m'\in S_{\sigma}$ we have $m+m'\in S_{\sigma}$.
	\item $S_{\sigma}$ is finitely generated as an additive semigroup, i.e. there exist $m_{1},\cdots, m_{p}\in S_{\sigma}$ such that $S_{\sigma}=\mathbb{Z}_{\geq0}\langle m_{1},\cdots,m_{p}\rangle$.
	\item $S_{\sigma}+(-S_{\sigma})=M$.
	\item $S_{\sigma}$ is saturated, i.e. if $cm\in S_{\sigma}$ for $m\in M$ and $c\in\mathbb{Z}_{>0}$ then $m\in S_{\sigma}$.
	\end{enumerate}
	Conversely, for any additive semigroup $S\subset M$ satisfying 1--4 there exists a unique strictly convex rational polyhedral cone $\sigma\in N_{\mathbb{R}}$ such that $S=S_{\sigma}$.
	\end{prop}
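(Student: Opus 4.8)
The plan is to verify properties (1)--(4) directly from the definition $S_\sigma=\sigma^\vee\cap M$, treating (2) by Gordan's lemma, and then to construct an inverse for the converse. Properties (1), (3) and (4) are essentially formal. For (1), the origin pairs to $0$ with every $y\in\sigma$, so $O\in\sigma^\vee$, and if $m,m'\in\sigma^\vee$ then $\langle m+m',y\rangle=\langle m,y\rangle+\langle m',y\rangle\ge 0$ for all $y\in\sigma$; intersecting with the group $M$ preserves both facts. For (4), if $cm\in\sigma^\vee\cap M$ with $c\in\mathbb{Z}_{>0}$, then $\langle m,y\rangle=\tfrac1c\langle cm,y\rangle\ge0$ for every $y\in\sigma$, whence $m\in S_\sigma$.

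For (3) I would use strict convexity of $\sigma$, which forces $\sigma^\vee$ to be full-dimensional; hence there is an interior lattice point $u\in S_\sigma$ with $\langle u,n_j\rangle>0$ on every generator $n_j$ of $\sigma$. Given arbitrary $m\in M$, for $k\gg0$ one has $\langle ku+m,n_j\rangle\ge0$ for all $j$, so $ku+m\in S_\sigma$ and also $ku\in S_\sigma$, giving $m=(ku+m)-ku\in S_\sigma+(-S_\sigma)$.

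The substantive step in the forward direction is (2), Gordan's lemma. Its proof rests on the polyhedral duality fact that the dual of a rational polyhedral cone is again rational polyhedral (Minkowski--Weyl/Farkas), so I may write $\sigma^\vee=\Cone(m_1,\dots,m_k)$ with all $m_i\in M$. Consider the bounded parallelotope $K=\{\sum t_i m_i : 0\le t_i\le 1\}$; since $M$ is discrete, $K\cap M$ is finite. Any $m\in S_\sigma$ is $m=\sum t_i m_i$ with $t_i\ge0$, and splitting $t_i=\lfloor t_i\rfloor+\{t_i\}$ exhibits $m=\sum\lfloor t_i\rfloor m_i+\sum\{t_i\}m_i$ as a nonnegative integer combination of the $m_i$ plus the point $\sum\{t_i\}m_i\in K\cap M$. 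Thus the finite set $K\cap M$ generates $S_\sigma$.

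For the converse, given $S$ satisfying (1)--(4) I would set $C:=\Cone_\mathbb{R}(S)\subset M_\mathbb{R}$, which is rational polyhedral by (2), and define $\sigma:=C^\vee\subset N_\mathbb{R}$. Biduality for closed convex cones gives $\sigma^\vee=C^{\vee\vee}=C$, so $S_\sigma=C\cap M$, while property (3) makes $C$ full-dimensional, equivalently $\sigma$ strictly convex. It remains to identify $C\cap M$ with $S$: the inclusion $S\subseteq C\cap M$ is clear, and for the reverse I would take $m\in C\cap M$, write it with rational coefficients in the rational generators of $C$ (possible since $m$ is a rational point of a rational cone), clear denominators to get $dm\in S$ for some $d\in\mathbb{Z}_{>0}$, and invoke saturation (4) to conclude $m\in S$. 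Uniqueness follows because a full-dimensional rational cone is recovered from its lattice points, so $S_\sigma=S_{\sigma'}$ forces $\sigma^\vee=\sigma'^\vee$ and then $\sigma=\sigma'$ by biduality. The main obstacle throughout is the convex-geometry input---rationality of dual cones and biduality---together with the Gordan finiteness argument; once these are in place, the semigroup statements reduce to bookkeeping.
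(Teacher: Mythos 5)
Your proposal is correct, and since the paper offers no proof of this proposition (it is quoted verbatim from \cite[Proposition 1.1]{Oda}), the relevant comparison is with the standard argument in the cited literature, which yours matches: Gordan's lemma via the rationality of the dual cone and the parallelotope $K=\{\sum t_i m_i : 0\le t_i\le 1\}$, an interior lattice point of $\sigma^\vee$ for property (3), scaling plus saturation for the converse inclusion $C\cap M\subseteq S$, and biduality of closed convex cones for existence and uniqueness. The only points you elide---density of rational points to produce the interior lattice point $u$, and rational solvability of the rational linear system when writing $m\in C\cap M$ with coefficients in $\mathbb{Q}_{\geq 0}$---are standard and routine, so there is no genuine gap.
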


	\subsection{Fans and toric varieties}
		
	Consider an algebraic torus $T_{N}$ associated with the lattice $N$ (and the dual lattice $M$)
	 $$T_{N}:=Hom_{\mathbb{Z}}(M,\mathbb{C}^{*})=N\otimes_{\mathbb{Z}}\mathbb{C}^{*}.$$
Each element $m\in M$ gives rise to a character (a group homomorphism)  $$\chi^{m}\colon T_{N}\to\mathbb{C}^{*}, \chi^{m}(t):=t(m).$$ 
The characters form a multiplicative group $T_{N}$ which may be identified with the lattice $M$. Each element $n\in N$ defines a one-parametric subgroup $$\lambda^{n}\colon\mathbb{C}^{*}\to T_{N}, \lambda^{n}(t)(m):=t^{\langle m,n \rangle}.$$ 
The multiplicative group of all one-parametric subgroups of the torus $T_{N}$ may be identified with the lattice $N$.

 Let $\{n_{1},\cdots,n_{r}\}$ be a $\mathbb{Z}$-basis of the lattice $N$ and $\{m_{1}.\cdots,m_{r}\}$ be the dual $\mathbb{Z}$-basis of $M$. Denoting $u_{j}=\chi^{m_{j}}$, we get an isomorphism
	\begin{gather*}
	T_{N} \cong (\mathbb{C}^{*})^{r},\\ t \mapsto (u_{1}(t),\cdots,u_{r}(t)).
	\end{gather*}
Now, if $m=\sum\limits_{j=1}^{r}a_{j}m_{j}$ then $\chi^{m}(t)=u_{1}^{a_1}\hdots u_{r}^{a_r}$; if $n=\sum\limits_{j=1}^{r}b_{j}n_{j}$ then $\lambda^{n}(t)=(t^{b_1},\cdots,t^{b_r})$.
	
For any cone $\sigma$ we construct a normal complex space $U_{\sigma}$ as follows
	\begin{prop}\cite[Proposition 1.2]{Oda}\label{affvar}
	Let $$S_{\sigma}=\mathbb{Z}_{\geq0}\langle m_{1},\cdots,m_{p}\rangle \subset M$$ be a finitely generated subsemigroup determined by a strictly convex rational polyhedral cone $\sigma\subset N_{\mathbb{R}}$. Let $$U_{\sigma}:=\{u\colon S_{\sigma}\to\mathbb{C}\mid u(O)=1, u(m+m')=u(m)u(m'),\forall\,m,m'\in S_{\sigma}\}$$ and $\chi^{m}(u):=u(m)$ for $m\in S_{\sigma}$,  $u\in U_{\sigma}$. Then 
	\begin{itemize}
	\item the mapping $(\chi^{m_1},\cdots,\chi^{m_p})\colon U_{\sigma}\to\mathbb{C}^{p}$ is injective. 
	\item identifying $U_{\sigma}$ with its image we get an algebraic subset in $\mathbb{C}^{p}$ given by a system of equations of the form $$z_{1}^{a_{1}}\cdots z_{p}^{a_{p}}-z_{1}^{b_{1}}\cdots z_{p}^{b_{p}}=0$$ for $(a_{1},\hdots,a_{p}),(b_{1},\hdots,b_{p})\in\mathbb{Z}_{\geq0}^{p}$ such that $\sum\limits_{i=1}^{p}a_{i}m_{i}=\sum\limits_{i=1}^{p}b_{i}m_{i}$.
	\item the structure of an $r$-dimensional irreducible normal complex space on $U_{\sigma}$ is induced by the complex analytic structure of  $\mathbb{C}^{p}$ and does not depend on the choice of the system of generators $\{m_{1},\cdots,m_{p}\}$ for the subsemigroup $S_{\sigma}$.
	\item each $m\in S_{\sigma}$ defines a polynomial function $\chi^{m}$ on $U_{\sigma}$ which is holomorphic with respect to the induced complex structure.
	\end{itemize}
	\end{prop}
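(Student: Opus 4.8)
The plan is to identify $U_{\sigma}$ with the set of $\mathbb{C}$-points of the affine scheme whose coordinate ring is the semigroup algebra $\mathbb{C}[S_{\sigma}]$, the $\mathbb{C}$-vector space with basis $\{\chi^{m}\mid m\in S_{\sigma}\}$ and multiplication $\chi^{m}\cdot\chi^{m'}=\chi^{m+m'}$. Indeed, an element $u\in U_{\sigma}$ is precisely a semigroup homomorphism $S_{\sigma}\to(\mathbb{C},\cdot)$ sending $O\mapsto 1$, which is the same datum as a $\mathbb{C}$-algebra homomorphism $\mathbb{C}[S_{\sigma}]\to\mathbb{C}$, i.e. a closed point of $\mathrm{Spec}\,\mathbb{C}[S_{\sigma}]$. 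Under this dictionary every assertion of the proposition becomes a ring-theoretic statement about $\mathbb{C}[S_{\sigma}]$, and the four properties of the previous proposition are exactly the input one needs.

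First I would settle the two elementary bullets together. If $u,u'\in U_{\sigma}$ agree on the generators $m_{1},\dots,m_{p}$, then for any $m=\sum_{i}c_{i}m_{i}\in S_{\sigma}$ with $c_{i}\in\mathbb{Z}_{\geq0}$ one has $u(m)=\prod_{i}u(m_{i})^{c_{i}}=\prod_{i}u'(m_{i})^{c_{i}}=u'(m)$, so $u=u'$ and the map $(\chi^{m_{1}},\dots,\chi^{m_{p}})$ is injective. The same computation shows $\chi^{m}=\prod_{i}(\chi^{m_{i}})^{c_{i}}$ is a monomial in the coordinates $z_{i}=\chi^{m_{i}}$, hence a polynomial and in particular holomorphic, which is the last bullet. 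For the image, a point $z=(z_{1},\dots,z_{p})$ is a value of some $u$ iff the rule $\sum_{i}c_{i}m_{i}\mapsto\prod_{i}z_{i}^{c_{i}}$ is well defined; this holds exactly when $\prod_{i}z_{i}^{a_{i}}=\prod_{i}z_{i}^{b_{i}}$ for every relation $\sum_{i}a_{i}m_{i}=\sum_{i}b_{i}m_{i}$, and such a $z$ then automatically yields a multiplicative $u$. Thus the image is the common zero locus of the stated binomials, an algebraic set (by Noetherianity finitely many of them already suffice).

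Next I would treat the structural claims through $\mathbb{C}[S_{\sigma}]$. Because $S_{\sigma}$ generates $M$ as a group (the relation $S_{\sigma}+(-S_{\sigma})=M$ above), the algebra $\mathbb{C}[S_{\sigma}]$ sits inside the Laurent algebra $\mathbb{C}[M]\cong\mathbb{C}[t_{1}^{\pm1},\dots,t_{r}^{\pm1}]$ and shares its fraction field $\mathbb{C}(t_{1},\dots,t_{r})$. Hence $\mathbb{C}[S_{\sigma}]$ is an integral domain, so $U_{\sigma}$ is irreducible, and it has Krull dimension $r$, so $\dim U_{\sigma}=r$. Independence of the generators is then immediate from this intrinsic description: the coordinate ring is $\mathbb{C}[S_{\sigma}]$ no matter how $S_{\sigma}$ is presented, so two choices give monomial embeddings with biholomorphic images, the analytic structure being characterised as the unique one rendering all $\chi^{m}$ holomorphic.

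The hard part is normality, which I would deduce from the saturation property. Writing $\sigma^{\vee}$ as a finite intersection of rational half-spaces $\{x\mid\langle x,v_{j}\rangle\geq0\}$ indexed by the generators $v_{j}$ of $\sigma$ (which we may take primitive), one gets $S_{\sigma}=\bigcap_{j}\big(\{x\mid\langle x,v_{j}\rangle\geq0\}\cap M\big)$ and accordingly $\mathbb{C}[S_{\sigma}]=\bigcap_{j}\mathbb{C}[\{x\mid\langle x,v_{j}\rangle\geq0\}\cap M]$ inside $\mathbb{C}[M]$. After a suitable change of lattice basis each half-space semigroup algebra is isomorphic to $\mathbb{C}[x,t_{2}^{\pm1},\dots,t_{r}^{\pm1}]$, a localisation of a polynomial ring and hence normal; and an intersection of normal subrings of a fixed field is normal. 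Alternatively, and more directly, I would exploit the $M$-grading: an element of $\mathbb{C}(t_{1},\dots,t_{r})$ integral over $\mathbb{C}[S_{\sigma}]$ may be taken homogeneous, say a scalar multiple of some $\chi^{m}$, and an integral dependence relation forces $cm\in S_{\sigma}$ for some $c\in\mathbb{Z}_{>0}$, whence $m\in S_{\sigma}$ by saturation. Either route shows $\mathbb{C}[S_{\sigma}]$ is integrally closed, and analytifying $\mathrm{Spec}\,\mathbb{C}[S_{\sigma}]$ equips $U_{\sigma}$ with the asserted structure of a normal complex space.
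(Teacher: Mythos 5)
The paper gives no proof of this proposition at all: it is quoted verbatim from Oda (Proposition~1.2 of \emph{Convex Bodies and Algebraic Geometry}) and used as a black box, so there is nothing internal to compare your argument against. Your proof is correct and is essentially the standard one from that reference and from Cox--Little--Schenck: identify $U_{\sigma}$ with the $\mathbb{C}$-points of $\mathrm{Spec}\,\mathbb{C}[S_{\sigma}]$, get injectivity and the binomial equations from the presentation of $S_{\sigma}$ by the chosen generators, irreducibility and dimension from $S_{\sigma}+(-S_{\sigma})=M$, and normality either from writing $S_{\sigma}$ as an intersection of half-space semigroups or from saturation via the $M$-grading (for the latter you should note explicitly that the integral closure of an $M$-graded domain inside $\mathbb{C}[M]$ is again $M$-graded, which is what licenses reducing to homogeneous elements $\chi^{m}$).
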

	
	Any face $\tau$ of a cone $\sigma$ is again a cone, hence, it defines a complex space $U_{\tau}$. The relationship between $U_{\sigma}$ and $U_{\tau}$ is given by	
	\begin{prop}\cite[Proposition 1.3]{Oda}\label{faceandopen}\indent
	\begin{enumerate}	
	\item For a strictly convex rational polyhedral cone $\sigma\subset N_{\mathbb{R}}$ its dual cone $\sigma^{\vee}\subset M_{\mathbb{R}}$ is a rational polyhedral cone.
	\item If $\tau\prec\sigma$ then there exists $m_{0}\in M\cap\sigma^{\vee}$ such that
		\begin{itemize}
		\item $\tau=\sigma\cap H_{m_0}$.
		\item $S_{\tau}=S_{\sigma}+\mathbb{Z}_{\geq0}(-m_{0})$.
		\item $U_{\tau}=\{u\in U_{\sigma}\mid u(m_{0})\neq0\}$ is an open subset in $U_{\sigma}$.
		\end{itemize}
	\end{enumerate}
	\end{prop}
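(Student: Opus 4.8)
The plan is to establish the two assertions in turn: first the polyhedrality of the dual cone, which is the only genuinely nontrivial ingredient, and then to read off the description of the face $\tau$, its semigroup, and the open set $U_\tau$ as essentially formal consequences. For part (1), write $\sigma=\Cone(n_1,\dots,n_s)$ with $n_i\in N$. By definition $\sigma^\vee$ is the intersection of the finitely many rational half-spaces $\{x\in M_{\mathbb{R}}\mid\langle x,n_i\rangle\ge0\}$, so the point is that a finite intersection of rational half-spaces is again finitely generated over $\mathbb{Z}_{\ge0}$. This is the Farkas--Minkowski--Weyl theorem asserting the equivalence of the description of a polyhedral cone by generators and by homogeneous linear inequalities; since the defining data are rational, the generators of $\sigma^\vee$ may be chosen in $M$. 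The same theorem yields biduality $(\sigma^\vee)^\vee=\sigma$, which I record for later use.

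For part (2), I start from the very definition of a face: $\tau=\sigma\cap H_{m}$ for some $m\in\sigma^\vee$. The functionals in $\sigma^\vee$ that cut out $\tau$ form the relative interior of a face of the rational polyhedral cone $\sigma^\vee$, and since that cone is rational this relative interior contains lattice points; choosing any such $m_0\in M\cap\sigma^\vee$ gives $\tau=\sigma\cap H_{m_0}$ together with the crucial strictness that $\langle m_0,n\rangle>0$ for every generator $n$ of $\sigma$ not lying in $\tau$. The heart of part (2) is then the dual-cone identity
\[
\tau^\vee=\sigma^\vee+\mathbb{R}_{\ge0}(-m_0).
\]
The inclusion $\supseteq$ is immediate, since $\sigma^\vee\subseteq\tau^\vee$ (because $\tau\subseteq\sigma$) and $-m_0\in\tau^\vee$ (because $m_0$ vanishes on $\tau\subseteq H_{m_0}$). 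For $\subseteq$, given $x\in\tau^\vee$ I test $x+c\,m_0$ on the finite generating set of $\sigma$: on generators inside $\tau$ the functional $m_0$ vanishes so $\langle x,\cdot\rangle\ge0$ already, while on the finitely many remaining generators $\langle m_0,\cdot\rangle>0$, so a single large $c\ge0$ makes $x+c\,m_0\in\sigma^\vee$, whence $x=(x+c\,m_0)-c\,m_0$ lies in the right-hand side.

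Intersecting this identity with $M$ gives the semigroup relation $S_\tau=S_\sigma+\mathbb{Z}_{\ge0}(-m_0)$: one inclusion is clear, and conversely any $m\in\tau^\vee\cap M$ can be written $m=y-t\,m_0$ with $y\in\sigma^\vee$, $t\ge0$, so taking the integer $k=\lceil t\rceil$ one finds $m+k\,m_0\in\sigma^\vee\cap M=S_\sigma$, giving $m\in S_\sigma+\mathbb{Z}_{\ge0}(-m_0)$. The open-immersion statement is now a reinterpretation of this relation. A point $u\in U_\tau$ is a multiplicative map $S_\tau\to\mathbb{C}$ with $u(O)=1$; its restriction to $S_\sigma$ is a point of $U_\sigma$, and since $-m_0\in S_\tau$ multiplicativity forces $u(m_0)u(-m_0)=u(O)=1$, so $u(m_0)\ne0$. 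Conversely a point $u\in U_\sigma$ with $u(m_0)\ne0$ extends uniquely to $S_\tau=S_\sigma+\mathbb{Z}_{\ge0}(-m_0)$ by setting $u(-m_0):=u(m_0)^{-1}$, the extension being well defined and multiplicative exactly because of the semigroup relation. Hence $U_\tau=\{u\in U_\sigma\mid\chi^{m_0}(u)\ne0\}$ is the nonvanishing locus of the holomorphic function $\chi^{m_0}$, and therefore open in $U_\sigma$.

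I expect the main obstacle to be the convex-geometric content of part (1): the appeal to the Farkas--Minkowski--Weyl theorem and to biduality is where genuine finiteness and separation arguments are required, as opposed to the formal inclusions of part (2). A secondary but real point of care is the selection of $m_0$ in the relative interior of the correct face of $\sigma^\vee$, since the reverse inclusion in the dual-cone identity depends on $m_0$ being strictly positive on every generator of $\sigma$ outside $\tau$; everything downstream---the semigroup equality and the openness of $U_\tau$---is then a direct lattice computation.
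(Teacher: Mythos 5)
Your proof is correct, and since the paper merely quotes this proposition from Oda's book without reproducing an argument, the relevant comparison is with the cited source: your route (Minkowski--Weyl duality for part (1); choosing $m_0\in M$ in the relative interior of the dual face $\sigma^{\vee}\cap\tau^{\perp}$, establishing $\tau^{\vee}=\sigma^{\vee}+\mathbb{R}_{\geq0}(-m_0)$ by adding a large multiple of $m_0$, intersecting with $M$ via $k=\lceil t\rceil$, and identifying $U_{\tau}$ with the nonvanishing locus of $\chi^{m_0}$ for part (2)) is exactly the standard proof found in Oda and in Cox--Little--Schenck. The one point you correctly isolate as the crux, strict positivity of $m_0$ on every generator of $\sigma$ outside $\tau$, is indeed guaranteed by the relative-interior choice, so there is no gap.
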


If $\sigma_1,\sigma_2$ are two cones with a common face $\tau=\sigma_{1}\cap\sigma_{2}$, then $U_{\tau}$ is an open subset of both $U_{\sigma_1}$ and $U_{\sigma_2}$ and we can glue them together along  $U_{\tau}$ to obtain a complex space. A fan is a collection of cones specifying how to glue all $U_\sigma$'s together.
	
	\begin{defn}
A fan is a pair	$(\Sigma,N)$ where $\Sigma$ is a finite set of strictly convex rational polyhedral cones $\sigma\subset N_{\mathbb{R}}$ with the properties: 
	\begin{itemize}
	\item each face of any cone  $\sigma\in\Sigma$ is in $\Sigma$.
	\item for any cones $\sigma,\sigma'\in\Sigma$ their intersection $\sigma\cap\sigma'$ is a face of both.
	\end{itemize}
	\end{defn}	
	
\begin{exmp*}
		Consider a collection of the following 2-dimensional cones 
\begin{align*}
&\Cone((1,\,1,\,1),\,(1,\,-1,\,-1)), & &\Cone((1,\,-1,\,-1),\,(-1,\,-1,\,1)),\\
&\Cone((-1,\,-1,\,1),\,(-1,\,1,\,-1)), & &\Cone((-1,\,1,\,-1),\,(1,\,1,\,1))
\end{align*}
together with all their faces. This collection is a fan. Both complement components of this fan are concave.	
	\begin{figure}[h!]
	\begin{center}
	\includegraphics[width=0.4\textwidth]{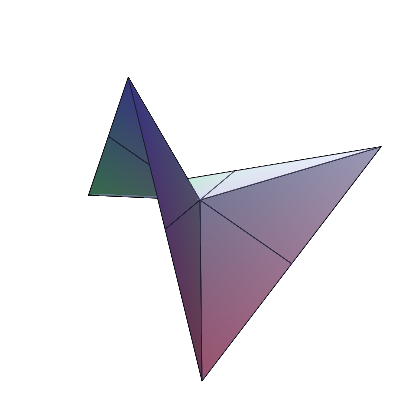}
	\label{3fan1}
	\caption{A fan with concave complement components.}
	\end{center}					
	\end{figure} 
\end{exmp*}

	\begin{prop}\cite[Theorem 1.4]{Oda}
	For a fan $(\Sigma,N)$ the result of gluing $\{U_{\sigma}\mid\sigma\in\Sigma\}$ together as described above is an irreducible normal complex space $X_{\Sigma,N}$ of dimension $r=\rk{N}$.
	\end{prop}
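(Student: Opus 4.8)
The plan is to check that the gluing prescription yields a bona fide complex space, and then to transport the local properties established for the affine charts in Proposition~\ref{affvar} to the glued object. The governing observation is that the trivial cone $\{O\}$ is a face of every $\sigma\in\Sigma$, and $U_{\{O\}}$ is exactly the torus $T_{N}$; by Proposition~\ref{faceandopen} this $T_{N}$ sits as a dense open subset inside every $U_{\sigma}$, with all characters $\chi^{m}$, $m\in M$, defined on it. Thus all the affine pieces share one and the same dense open torus, and each gluing isomorphism $U_{\tau}\subset U_{\sigma_{1}}\to U_{\tau}\subset U_{\sigma_{2}}$, for $\tau=\sigma_{1}\cap\sigma_{2}$, restricts to the identity on $T_{N}$.

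First I would set up the gluing data: for each pair $\sigma_{1},\sigma_{2}\in\Sigma$ the fan axioms force $\tau:=\sigma_{1}\cap\sigma_{2}$ to be a common face, so Proposition~\ref{faceandopen} embeds $U_{\tau}$ as an open subset of both $U_{\sigma_{1}}$ and $U_{\sigma_{2}}$, and the gluing map $\varphi_{\sigma_{2}\sigma_{1}}$ is the induced identification of the two copies of $U_{\tau}$. The cocycle condition $\varphi_{\sigma_{3}\sigma_{2}}\circ\varphi_{\sigma_{2}\sigma_{1}}=\varphi_{\sigma_{3}\sigma_{1}}$ on the triple overlap $U_{\sigma_{1}\cap\sigma_{2}\cap\sigma_{3}}$ is where the common torus pays off: all three maps restrict to the identity on the dense subset $T_{N}$, each $U_{\sigma}$ is a Hausdorff space (a closed subset of $\mathbb{C}^{p}$ under the character embedding of Proposition~\ref{affvar}), and two continuous maps into a Hausdorff space agreeing on a dense subset agree everywhere. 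Hence the cocycle identity holds and the analytic gluing produces a space $X_{\Sigma,N}$ that is locally isomorphic to the charts $U_{\sigma}$.

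The main obstacle is separatedness, i.e.\ Hausdorffness of $X_{\Sigma,N}$: gluing two affine varieties along a common open subset can fail to be Hausdorff, as in the line with a doubled origin, and this is precisely where the fan axiom ``$\sigma_{1}\cap\sigma_{2}$ is a common face'' does the work. The key point to establish is the \emph{separation lemma}: if $\tau=\sigma_{1}\cap\sigma_{2}$ is a common face, then at the level of semigroups $S_{\tau}=S_{\sigma_{1}}+S_{\sigma_{2}}$. To prove it I would first produce a single separating functional $m_{0}\in M$ with $m_{0}\in\sigma_{1}^{\vee}$, $-m_{0}\in\sigma_{2}^{\vee}$, and $\tau=\sigma_{1}\cap H_{m_{0}}=\sigma_{2}\cap H_{m_{0}}$; the existence of such a common $m_{0}$ is the genuine content of the lemma and follows from a separating-hyperplane argument for the two cones sharing the face $\tau$. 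Granting it, Proposition~\ref{faceandopen} applied to $\tau\prec\sigma_{1}$ gives $S_{\tau}=S_{\sigma_{1}}+\mathbb{Z}_{\geq0}(-m_{0})$, and since $-m_{0}\in\sigma_{2}^{\vee}\cap M=S_{\sigma_{2}}$ we obtain $S_{\tau}\subseteq S_{\sigma_{1}}+S_{\sigma_{2}}$; the reverse inclusion is immediate from $S_{\sigma_{i}}\subset S_{\tau}$. The semigroup identity says the algebra generated by the characters of $S_{\sigma_{1}}\cup S_{\sigma_{2}}$ is all of $\mathbb{C}[S_{\tau}]$, so the image of $U_{\tau}$ under $(\mathrm{id},\varphi_{\sigma_{2}\sigma_{1}})$ is closed in $U_{\sigma_{1}}\times U_{\sigma_{2}}$, which is exactly the condition guaranteeing that no pair of non-separated points appears after gluing. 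Therefore $X_{\Sigma,N}$ is Hausdorff.

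The remaining assertions follow formally from the charts together with the dense common torus. For irreducibility: each $U_{\sigma}$ is irreducible by Proposition~\ref{affvar} and contains the dense $T_{N}$, so $T_{N}$ is dense in $X_{\Sigma,N}=\bigcup_{\sigma}U_{\sigma}$ and the whole space is the closure of the irreducible $T_{N}$, hence irreducible. Normality is a local property and every chart $U_{\sigma}$ is normal by Proposition~\ref{affvar}, so $X_{\Sigma,N}$ is normal. For the dimension, each $U_{\sigma}$ has dimension $r=\rk N$ by Proposition~\ref{affvar}, and an irreducible space covered by charts of dimension $r$ has dimension $r$. I expect essentially all of the difficulty to be concentrated in the separation lemma, specifically the existence of the common separating functional $m_{0}$; the rest is bookkeeping resting on Propositions~\ref{affvar} and~\ref{faceandopen}.
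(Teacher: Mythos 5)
The paper does not prove this proposition itself --- it is quoted verbatim from Oda (Theorem 1.4) --- and your argument is a correct reconstruction of the standard proof given there: the cocycle condition via agreement on the common dense torus $T_{N}$, Hausdorffness via the separation lemma $S_{\tau}=S_{\sigma_{1}}+S_{\sigma_{2}}$ (equivalently, closedness of $U_{\tau}$ in $U_{\sigma_{1}}\times U_{\sigma_{2}}$), and irreducibility, normality, and dimension transported from the affine charts of Proposition~\ref{affvar}. You have correctly located the one nontrivial ingredient, the existence of the common separating functional $m_{0}\in\sigma_{1}^{\vee}\cap(-\sigma_{2})^{\vee}$ cutting out $\tau$ from both cones, so no further changes are needed.
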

	
	\begin{defn}
The complex space $X_{\Sigma,N}$ is called a toric variety associated to the fan $(\Sigma,N)$.
	\end{defn}

When the lattice $N$ is fixed we omit mentioning it in the notation, writing simply $\Sigma$ and $X_\Sigma$. We also denote by $\Sigma(1)$ the set of all 1-dimensional cones in $\Sigma$.

Smoothness and compactness of a normal toric variety can also be described in terms of its fan.
	\begin{defn}
We call a cone $\sigma$ smooth if there exists a $\mathbb{Z}$-basis $\{n_{1},\cdots,n_{r}\}$ of the lattice $N$ and $s\leq r$ such that $\sigma=\Cone(n_{1},\cdots,n_{s})$.
	\end{defn}
	\begin{thm}\cite[Theorem 1.10]{Oda}
A toric variety $X_{\Sigma}$ is smooth (i.e. is a smooth complex manifold) if and only if every cone  $\sigma\in\Sigma$ is smooth.
	\end{thm}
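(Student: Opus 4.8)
The plan is to reduce the global assertion to a local one and then to a combinatorial computation on a single chart. The affine pieces $\{U_{\sigma}\mid\sigma\in\Sigma\}$ form an open cover of $X_{\Sigma}$, and smoothness is a local property, so $X_{\Sigma}$ is a complex manifold if and only if each $U_{\sigma}$ is smooth. As every face of a smooth cone is again smooth and $\Sigma$ is closed under faces, it would suffice to test the maximal cones; but it is cleanest to establish, for an arbitrary strictly convex cone $\sigma$, the equivalence
\[
U_{\sigma}\ \text{is smooth}\quad\Longleftrightarrow\quad \sigma\ \text{is a smooth cone.}
\]

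For the easy implication ($\Leftarrow$) I would start from $\sigma=\Cone(n_{1},\dots,n_{s})$ with $\{n_{1},\dots,n_{r}\}$ a $\mathbb{Z}$-basis of $N$, and let $\{m_{1},\dots,m_{r}\}$ be the dual basis of $M$. A direct check gives $\sigma^{\vee}=\Cone(m_{1},\dots,m_{s},\pm m_{s+1},\dots,\pm m_{r})$, whence $S_{\sigma}=\mathbb{Z}_{\geq0}\langle m_{1},\dots,m_{s}\rangle+\mathbb{Z}\langle m_{s+1},\dots,m_{r}\rangle$ is free. By Proposition~\ref{affvar} this identifies $U_{\sigma}\cong\mathbb{C}^{s}\times(\mathbb{C}^{*})^{r-s}$, which is smooth.

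The substance of the theorem is the converse ($\Rightarrow$). First I would reduce to a full-dimensional cone: the sublattice $N\cap\mathbb{R}(\sigma)$ is saturated in $N$, so $N$ splits as $N\cong N_{1}\oplus N_{2}$ with $N_{1}=N\cap\mathbb{R}(\sigma)$ of rank $k=\dim\sigma$, and $M\cong M_{1}\oplus M_{2}$ dually. Dualizing and intersecting with $M$ shows $S_{\sigma}=S_{\sigma,N_{1}}\times M_{2}$, so $U_{\sigma}\cong U_{\sigma,N_{1}}\times(\mathbb{C}^{*})^{r-k}$; thus $U_{\sigma}$ is smooth iff the factor $U_{\sigma,N_{1}}$ is, while $\sigma$ is smooth in $N$ iff it is smooth, as a full-dimensional cone, in $N_{1}$. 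We may therefore assume $\dim\sigma=r$, so that $\sigma$ is full-dimensional and strictly convex and $U_{\sigma}$ has a unique torus-fixed (distinguished) point $x_{\sigma}$, which under the embedding $(\chi^{m_{1}},\dots,\chi^{m_{p}})\colon U_{\sigma}\hookrightarrow\mathbb{C}^{p}$ by the minimal (irreducible) generators $m_{1},\dots,m_{p}$ of $S_{\sigma}$ goes to the origin.

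The key computation is the Zariski tangent space at $x_{\sigma}$. Each defining binomial $z_{1}^{a_{1}}\cdots z_{p}^{a_{p}}-z_{1}^{b_{1}}\cdots z_{p}^{b_{p}}$ of $U_{\sigma}$ corresponds to a relation $\sum a_{i}m_{i}=\sum b_{i}m_{i}$; were such a binomial to carry a nonzero linear part at the origin, one of $a,b$ would be a standard basis vector, forcing some $m_{j}$ to equal another generator or to split as a sum of two nonzero elements of $S_{\sigma}$ --- impossible for distinct irreducible generators. Hence every defining equation lies in the square of the maximal ideal, the tangent space at $x_{\sigma}$ is all of $\mathbb{C}^{p}$, and the embedding dimension equals the number $p$ of minimal generators. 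Smoothness of the $r$-dimensional space $U_{\sigma}$ at $x_{\sigma}$ thus forces $p=r$. Finally, since $S_{\sigma}+(-S_{\sigma})=M$, the $r$ generators $m_{1},\dots,m_{r}$ generate the rank-$r$ lattice $M$ and therefore form a $\mathbb{Z}$-basis; the dual basis $\{n_{i}\}$ is a $\mathbb{Z}$-basis of $N$, and $\sigma=(\sigma^{\vee})^{\vee}=\Cone(n_{1},\dots,n_{r})$ is smooth. The main obstacle is precisely this direction: identifying the embedding dimension at the distinguished point with the number of minimal semigroup generators through the vanishing of the linear parts, and then upgrading the resulting $\mathbb{Q}$-independence to an honest $\mathbb{Z}$-basis of $M$.
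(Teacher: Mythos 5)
Your proof is correct and follows essentially the same route as the source the paper relies on: the paper gives no proof of its own here, citing \cite[Theorem 1.10]{Oda}, and the standard argument there (and in \cite{Cox}) is precisely your reduction to the full-dimensional affine chart followed by the computation of the Zariski tangent space at the distinguished point, identifying the embedding dimension with the number of irreducible generators of $S_{\sigma}$. All the steps check out, including the final upgrade from $r$ generators of $M$ to a $\mathbb{Z}$-basis via surjectivity of $\mathbb{Z}^{r}\to M$.
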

	\begin{defn}
A fan $\Sigma$  is called complete if its support $|\Sigma|:=\bigcup\limits_{\sigma\in\Sigma}\sigma$ coincides with $N_{\mathbb{R}}$.
	\end{defn}
	\begin{thm}\cite[Theorem 1.11]{Oda}
	A toric variety $X_{\Sigma}$ is compact if and only if its fan $\Sigma$ is complete.
	\end{thm}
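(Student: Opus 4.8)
The plan is to translate compactness, an analytic property, into the combinatorial condition $|\Sigma|=N_{\mathbb{R}}$ through the behaviour of the coordinate functions $\chi^{m}$ along the torus. Fix a $\mathbb{Z}$-basis of $N$, identify $N_{\mathbb{R}}\cong\mathbb{R}^{r}$ and $T_{N}\cong(\mathbb{C}^{*})^{r}$, and introduce the ``$\log$-modulus'' map $\pi\colon T_{N}\to N_{\mathbb{R}}$, $\pi(t):=(\log|t_{1}|,\dots,\log|t_{r}|)$. The single computation underlying everything is that, in the embedding $U_{\sigma}\hookrightarrow\mathbb{C}^{p}$ of Proposition \ref{affvar} given by generators $m_{1},\dots,m_{p}$ of $S_{\sigma}=\sigma^{\vee}\cap M$, a torus point $t$ has coordinates $\chi^{m_{i}}(t)$ with $|\chi^{m_{i}}(t)|=e^{\langle m_{i},\pi(t)\rangle}$, while $\chi^{m_{i}}(\lambda^{n}(s))=s^{\langle m_{i},n\rangle}$ for the one-parameter subgroup $\lambda^{n}$. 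Since the $m_{i}$ generate $\sigma^{\vee}$ and $\sigma=(\sigma^{\vee})^{\vee}$, sign conditions on the pairings $\langle m_{i},-\rangle$ are exactly membership conditions for $\sigma$, and this is the bridge between the two worlds.

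For the direction ``compact $\Rightarrow$ complete'' I would argue by contraposition. If $\Sigma$ is not complete, its support $|\Sigma|$ is a proper closed subset stable under positive scaling, so the open set $N_{\mathbb{R}}\setminus|\Sigma|$ contains a rational point and, after clearing denominators, a lattice point $n_{0}\in N\setminus|\Sigma|$. Consider the sequence $\lambda^{n_{0}}(1/k)\in T_{N}\subset X_{\Sigma}$. If $X_{\Sigma}$ were compact, a subsequence would converge to some $x$ lying in an open chart $U_{\sigma}$; reading off the embedding coordinates, $\chi^{m_{i}}(\lambda^{n_{0}}(1/k))=(1/k)^{\langle m_{i},n_{0}\rangle}$ would have to tend to the finite value $\chi^{m_{i}}(x)$, forcing $\langle m_{i},n_{0}\rangle\geq0$ for every generator $m_{i}$ of $S_{\sigma}$ and hence $n_{0}\in(\sigma^{\vee})^{\vee}=\sigma\subset|\Sigma|$, a contradiction.

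The substantive direction is ``complete $\Rightarrow$ compact'', and here the plan is to exhibit $X_{\Sigma}$ as a finite union of compact pieces. For each maximal cone $\sigma$ set $A_{\sigma}:=\{x\in U_{\sigma}\mid|\chi^{m}(x)|\leq1 \text{ for all } m\in S_{\sigma}\}$; bounding the finitely many generators already bounds every $\chi^{m}$ (as $\chi^{m+m'}=\chi^{m}\chi^{m'}$), so in the embedding $A_{\sigma}=U_{\sigma}\cap\overline{\mathbb{D}}^{p}$ is a closed bounded subset of $\mathbb{C}^{p}$, hence compact. Let $K:=(S^{1})^{r}\subset T_{N}$ be the compact real torus. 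I claim $X_{\Sigma}=\bigcup_{\sigma\text{ maximal}}K\cdot A_{\sigma}$. Each $K\cdot A_{\sigma}$ is compact, being the continuous image of $K\times A_{\sigma}$, so the union is compact, in particular closed in $X_{\Sigma}$. Using the polar decomposition $t=k\cdot\rho$ with $k\in K$ and $\rho\in(\mathbb{R}_{>0})^{r}$, a torus point $t$ lands in $A_{\sigma}$ precisely when $|\chi^{m}(\rho)|=e^{\langle m,\pi(t)\rangle}\leq1$ for all $m\in\sigma^{\vee}$, i.e. when $-\pi(t)\in(\sigma^{\vee})^{\vee}=\sigma$; completeness guarantees $-\pi(t)\in|\Sigma|$, so such a maximal $\sigma$ exists and every torus point lies in the union. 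Since $T_{N}=U_{\{O\}}$ is the dense torus orbit (open in each chart by Proposition \ref{faceandopen}) and the union is closed, it must equal all of $X_{\Sigma}$, which is therefore compact.

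The main obstacle I anticipate is entirely in this last direction, and it is twofold. First, the duality bookkeeping of signs: it is the opposite direction $-\pi(t)\in\sigma$, not $\pi(t)\in\sigma$, that makes the coordinates bounded, and getting this backwards breaks the argument. Second, the slick step is to cover only the dense torus explicitly and invoke closedness to reach the remaining boundary orbits, which sidesteps a case analysis over the orbit--cone stratification but must be justified by the density of $T_{N}$ together with the compactness (hence closedness) of the finite union; one could alternatively verify the valuative criterion of properness, but the covering argument is more elementary and self-contained. Checking that $A_{\sigma}$ is genuinely compact --- i.e. that $U_{\sigma}$ is closed in $\mathbb{C}^{p}$, which is exactly the content of the binomial description in Proposition \ref{affvar} --- is the remaining routine point.
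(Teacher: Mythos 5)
Your proof is correct, and note first that the paper itself contains no argument for this statement: it is imported verbatim from \cite[Theorem 1.11]{Oda}, so there is no internal proof to compare against; what you have written is essentially the standard textbook argument. Your ``compact $\Rightarrow$ complete'' direction is a direct coordinate verification of one half of Proposition \ref{orbconecoresp}(7): if $n_{0}\in N\setminus|\Sigma|$ and some subsequence of $\lambda^{n_{0}}(1/k)$ converged to $x\in U_{\sigma}$, then each coordinate $(1/k)^{\langle m_{i},n_{0}\rangle}$ would converge, forcing $\langle m_{i},n_{0}\rangle\geq 0$ on the generators of $S_{\sigma}$ and hence $n_{0}\in\sigma^{\vee\vee}=\sigma$, a contradiction --- and working with coordinates directly is actually cleaner than citing the limit criterion, which speaks only of the full limit $\lim_{t\to 0}\lambda^{n}(t)$, not of subsequential limits. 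The converse direction via the covering $X_{\Sigma}=\bigcup_{\sigma}K\cdot A_{\sigma}$ is the classical argument (it appears, for instance, in Fulton's book on toric varieties), and your bookkeeping is right at the two places where it matters: $A_{\sigma}=U_{\sigma}\cap\overline{\mathbb{D}}{}^{p}$ is compact because $U_{\sigma}$ is cut out of $\mathbb{C}^{p}$ by the binomial equations of Proposition \ref{affvar} and is therefore closed; and the sign is the correct one, since $|\chi^{m}(t)|=e^{\langle m,\pi(t)\rangle}\leq 1$ for all $m\in S_{\sigma}$ amounts to $\langle m,\pi(t)\rangle\leq 0$ on a generating set of the rational cone $\sigma^{\vee}$, i.e.\ $-\pi(t)\in\sigma^{\vee\vee}=\sigma$, so completeness applied to $-\pi(t)$ does place every torus point in the union. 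The final closure step is also legitimate: $T_{N}$ is Zariski-open in the irreducible $U_{\sigma}$'s (Proposition \ref{faceandopen}), hence dense in the analytic topology, while the finite union of the compacta $K\cdot A_{\sigma}$ (continuous images of $K\times A_{\sigma}$ under the algebraic action) is closed because $X_{\Sigma}$ is Hausdorff --- this last point deserves the one word you give it, since separatedness is precisely what the common-face condition in the definition of a fan guarantees. The remaining implicit steps are harmless one-liners: every cone of a finite fan lies in a maximal one, and a rational point of the open set $N_{\mathbb{R}}\setminus|\Sigma|$ scales to a lattice point still outside $|\Sigma|$ because the complement of a union of cones is stable under positive scaling.
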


\subsection{Torus action}
	Let $(\Sigma, N)$ be a fan; it always contains a zero-dimensional cone $\{O\}$ --- the origin. Then $S_{\{O\}}=M$, and $U_{\{O\}}=T_{N}$. This cone is a face of every other cone in $(\Sigma, N)$, therefore $T_{N}$ is an open subset in each $U_{\sigma}$. Hence, a toric variety $X_{\Sigma}$ contains an algebraic torus $T_{N}$ as an open subset.
	
	The algebraic torus $T_{N}$ acts algebraically on $X_{\Sigma}$ (by this we mean a morphism of algebraic varieties $T_{N}\times X\to X$). The action is defined as follows:
	\begin{enumerate}	
	\item Let $t\in T_{N}$, it gives a group homomorphism $t\colon M\to\mathbb{C}^{*}$;
	\item If $u\in U_{\sigma}$, then we have a mapping  $u\colon S_{\sigma}\to\mathbb{C}$ with the property $u(O)=1, u(m+m')=u(m)u(m'),\forall\,m,m'\in S_{\sigma}\}$;
	\item Define $tu\colon S_{\sigma}\to\mathbb{C}$ as $(tu)(m):=t(m)u(m)$ for $m\in S_{\sigma}$;
	\item $tu\in U_{\sigma}$,  since $tu(O)=1$ and $(tu)(m+m')=t(m+m')u(m+m')=t(m)t(m')u(m)u(m')=(tu)(m)(tu)(m')$;
	\item Since $(t_{1}t_{2})(u)=t_{1}(t_{2}(u))$, we get an action of $T_{N}$ on each $U_{\sigma}$  and $X_{\Sigma}$.
	\end{enumerate}
	
Thus, a toric variety $X_{\Sigma}$ contains an algebraic torus $T_{N}$ with an algebraic action on itself which extends to an action on the whole $X_{\Sigma}$. The converse is also true
	\begin{thm}\cite[Theorem 1.5]{Oda}
	Suppose the algebraic torus $T_{N}$ acts algebraically on an irreducible normal algebraic variety $X$ locally of finite type over $\mathbb{C}$. If $X$ contains an open orbit isomorphic to  $T_{N}$, then there exists a unique fan $(\Sigma,N)$ such that $X$ is equivariantly isomorphic to $X_{\Sigma}$.
	\end{thm}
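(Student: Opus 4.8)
The plan is to reduce the assertion to the classification of \emph{affine} toric varieties and then to reassemble the affine charts into a fan, the only genuinely external input being Sumihiro's theorem on the existence of an invariant affine cover. Throughout I treat the fixed isomorphism of the open orbit with $T_N$ as part of the data, so that every chart carries the \emph{same} character lattice $M$ with its canonical translation action; this is precisely what will make the resulting fan, and the equivariant isomorphism, canonical. Step 1 (invariant affine cover): since $X$ is normal and carries a $T_N$-action, Sumihiro's theorem yields a covering $X=\bigcup_i X_i$ by $T_N$-invariant affine open subsets. Each $X_i$ is a nonempty open set, hence meets the dense open orbit, and being invariant it then contains all of $T_N$; so $T_N$ sits in $X_i$ as a dense open orbit of an affine normal variety.

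Step 2 (affine classification): fix one chart $X_i=\mathrm{Spec}\,A_i$. Restriction to the dense torus embeds $A_i\hookrightarrow\mathcal{O}(T_N)=\mathbb{C}[M]$, and the $T_N$-action splits $A_i$ into character eigenspaces; inside $\mathbb{C}[M]$ these are the lines $\mathbb{C}\chi^m$, whence $A_i=\bigoplus_{m\in S_i}\mathbb{C}\chi^m$ for some $S_i\subset M$. I then read the four defining properties of a semigroup $S_\sigma$ directly off the ring structure of $A_i$: that $A_i$ is a subring containing $1$ gives that $S_i$ is a subsemigroup containing $O$; that $A_i$ is a finitely generated $\mathbb{C}$-algebra gives that $S_i$ is finitely generated; that $\mathrm{Frac}(A_i)=\mathbb{C}(T_N)$ gives $S_i+(-S_i)=M$; and that $A_i$ is integrally closed gives that $S_i$ is saturated. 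By the converse part of the proposition characterising $S_\sigma$, there is then a unique strictly convex cone $\sigma_i$ with $S_i=S_{\sigma_i}$, yielding an equivariant isomorphism $X_i\cong U_{\sigma_i}$.

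Step 3 (assembling the fan): for two charts, $X_i\cap X_j$ is again $T_N$-invariant, contains $T_N$, and is affine because $X$ is separated, so Step 2 presents it as $U_\tau$ with $S_\tau\supset S_{\sigma_i},S_{\sigma_j}$; by Proposition~\ref{faceandopen} the open inclusion $U_\tau\hookrightarrow U_{\sigma_i}$ exhibits $\tau$ as a face of $\sigma_i$, and symmetrically a face of $\sigma_j$. To identify $\tau$ with $\sigma_i\cap\sigma_j$ I invoke separatedness once more: the diagonal $U_\tau\hookrightarrow U_{\sigma_i}\times U_{\sigma_j}$ being a closed immersion forces $\mathbb{C}[S_{\sigma_i}]\otimes\mathbb{C}[S_{\sigma_j}]\to\mathbb{C}[S_\tau]$ to be surjective, i.e. $S_\tau=S_{\sigma_i}+S_{\sigma_j}$; passing to the cones these semigroups generate gives $\tau^\vee=\sigma_i^\vee+\sigma_j^\vee$, and dualising yields $\tau=(\sigma_i^\vee+\sigma_j^\vee)^\vee=\sigma_i\cap\sigma_j$. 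Thus $\sigma_i\cap\sigma_j$ is a common face for all $i,j$, so $\Sigma:=\{\text{faces of the }\sigma_i\}$ satisfies the fan axioms, and the gluings of the $X_i$ along the $X_i\cap X_j$ coincide with the toric gluings of the $U_{\sigma_i}$ along the $U_{\sigma_i\cap\sigma_j}$, giving $X\cong X_\Sigma$ equivariantly.

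For uniqueness I note that $\Sigma$ is recoverable intrinsically from $(X,T_N)$ together with the base point $x_0$ of the open orbit: a $T_N$-invariant affine open $U\subset X$ determines the semigroup $\{m\in M:\chi^m|_U\text{ is regular}\}$, hence its cone, and equivalently each cone is $\{\,n\in N_{\mathbb{R}}:\lim_{t\to0}\lambda^{n}(t)\cdot x_0\text{ exists in }U\,\}$; since these descriptions do not refer to any chosen presentation, two fans producing equivariantly isomorphic varieties must agree. The main obstacle is Step 1: the existence of an invariant affine cover is exactly Sumihiro's theorem and is the one substantial ingredient I would have to cite rather than derive. The second delicate point is the separatedness argument in Step 3, where one must verify carefully that closedness of the diagonal really does yield the combinatorial identity $S_\tau=S_{\sigma_i}+S_{\sigma_j}$, as it is this identity alone that pins down $\tau=\sigma_i\cap\sigma_j$ and thereby the fan axioms.
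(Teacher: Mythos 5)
The paper gives no proof of this statement at all --- it is quoted verbatim from \cite{Oda} (Theorem 1.5) as background material --- so there is no internal argument to compare yours against. What you have written is, in outline, the standard proof of the classification (essentially the one in \cite{Oda} and \cite{Cox}): Sumihiro's invariant affine cover, the eigenspace decomposition $A_i=\bigoplus_{m\in S_i}\mathbb{C}\chi^m$ with the four semigroup axioms read off the ring (your translation of finite generation, $\mathrm{Frac}(A_i)=\mathbb{C}(T_N)$, and integral closedness is exactly right), and the closed-diagonal computation $S_\tau=S_{\sigma_i}+S_{\sigma_j}$ followed by dualisation to get $\tau=\sigma_i\cap\sigma_j$, which is the standard separation lemma. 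One small caveat on sources: the Sumihiro statement quoted in the paper (Theorem \ref{equvcompact}) is the equivariant \emph{completion} theorem, not the invariant affine cover, so the cover version must be cited separately, as you anticipate.

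The one step that fails \emph{as cited} is the face claim in Step 3: Proposition \ref{faceandopen} states the converse implication (if $\tau\prec\sigma$ then $U_\tau$ is an open subset of $U_\sigma$), not that an equivariant open inclusion $U_\tau\hookrightarrow U_{\sigma_i}$ forces $\tau\prec\sigma_i$. The latter is true but needs its own argument, and without it the fan axioms are not established: the diagonal computation identifies $\tau$ with $\sigma_i\cap\sigma_j$ as a cone but does not by itself make it a \emph{face} of $\sigma_i$ and $\sigma_j$. The missing implication follows from the orbit--cone correspondence you already invoke for uniqueness: by Proposition \ref{orbconecoresp}, parts (6)--(7), the $T_N$-orbits of $U_\tau$ are orbits of $U_{\sigma_i}$, and for $n\in\relint(\tau)\cap N$ the limit $\lim_{t\to0}\lambda^{n}(t)$ exists in $U_\tau$, hence in $U_{\sigma_i}$, where it is the identity of the orbit labelled by the unique face $\tau'\prec\sigma_i$ with $n\in\relint(\tau')$; conversely, since $U_\tau$ is open, a limit existing in $U_{\sigma_i}$ with value in $U_\tau$ already exists in $U_\tau$, so $\relint(\tau)\cap N$ and $\relint(\tau')\cap N$ coincide and $\tau=\tau'\prec\sigma_i$. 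Finally, a cosmetic point on uniqueness: an equivariant isomorphism restricts on the open orbits to a \emph{translation} of $T_N$, not the identity, so you should first compose with a translation; since translations do not affect the existence of $\lim_{t\to0}\lambda^{n}(t)\cdot x_{0}$, your intrinsic description of the cones then pins down the fan as claimed.
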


For each cone $\sigma\in\Sigma$ we can consider an algebraic torus $$O(\sigma):=\{\text{group homomorphisms }u\colon M\cap\sigma^{\perp}\to\mathbb{C}^{*}\}$$ as a $T_{N}$-orbit in $X_{\Sigma}$. In terms of the fan $\Sigma$ we can describe the $T_{N}$-orbits as follows:
	\begin{prop}\cite[Proposition 1.6]{Oda}\indent\label{orbconecoresp}
	\begin{enumerate}
	\item Each $T_{N}$-orbit is of the form $O(\sigma)$. 
	\item The fan $\Sigma$ is in a 1-1 correspondence with the set of $T_{N}$-orbits in $X_{\Sigma}$. 
	\item $O(\{O\})=U_{\{O\}}=T_{N}$.
	\item For $\sigma\in\Sigma$, the complex dimension $\dim(O(\sigma))=r-\dim(\sigma)$.
	\item For $\tau, \sigma\in\Sigma$,  $\tau\prec\sigma$ if and only if $O(\sigma)\subset\overline{O(\tau)}$.
	\item For $\sigma\in\Sigma$, the orbit $O(\sigma)$ is the only closed $T_{N}$-orbit in $U_{\sigma}$, and $U_{\sigma}=\bigsqcup\limits_{\tau\prec\sigma}O(\tau)$.
	\item Let $n\in N$ and $\sigma\in\Sigma$. Then $n\in\sigma$ if and only if for the one-parameter subgroup $\lambda^{n}$ corresponding to $n$ there exists the limit $\lim\limits_{t\to0}\lambda^{n}(t)$ in $U_{\sigma}$. In this case, the limit $\lim\limits_{t\to0}\lambda^{n}(t)$ coincides with the identity element of the algebraic torus $\orb(\tau)$, where $\tau$ is a face of $\sigma$ which contains $n$ in its relative interior. 
	\end{enumerate}
	\end{prop}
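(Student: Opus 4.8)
The plan is to describe the points of each affine chart $U_\sigma$ as monoid homomorphisms and to read off the orbit structure from the locus where such a homomorphism vanishes. By Proposition \ref{affvar} a point $u\in U_\sigma$ is precisely a map $u\colon S_\sigma\to\mathbb{C}$ with $u(O)=1$ and $u(m+m')=u(m)u(m')$. For such $u$ put $F(u):=\{m\in S_\sigma\mid u(m)\neq0\}$. Multiplicativity shows $O\in F(u)$, that $F(u)$ is closed under addition, and that $m+m'\in F(u)$ forces $m,m'\in F(u)$; i.e. $F(u)$ is a \emph{face} of the semigroup $S_\sigma$. The first key step is the purely combinatorial statement that the faces of $S_\sigma=\sigma^{\vee}\cap M$ are exactly the sets $S_\sigma\cap\tau^{\perp}$ as $\tau$ runs over the faces of $\sigma$. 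This follows from the duality of Proposition \ref{faceandopen} (the faces of $\sigma^{\vee}$ are the sets $\sigma^{\vee}\cap\tau^{\perp}$ with $\tau\prec\sigma$) together with saturation of $S_\sigma$, which guarantees that a semigroup face is exactly the set of lattice points of the cone face it spans and generates the full group $M\cap\tau^{\perp}$.

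Granting this, each $u\in U_\sigma$ has $F(u)=S_\sigma\cap\tau^{\perp}$ for a unique $\tau\prec\sigma$, and the restriction $u|_{F(u)}$ is a homomorphism into $\mathbb{C}^{*}$ which extends uniquely to a group homomorphism $M\cap\tau^{\perp}\to\mathbb{C}^{*}$, i.e. a point of $O(\tau)$. Conversely any point of $O(\tau)$, extended by $0$ off $\tau^{\perp}$, lies in $U_\sigma$ with non-vanishing face $S_\sigma\cap\tau^{\perp}$. This bijection is the stratification $U_\sigma=\bigsqcup_{\tau\prec\sigma}O(\tau)$ of item (6). Since $(tu)(m)=t(m)u(m)$ with $t(m)\neq0$, the action of $T_N$ preserves each $F(u)$, hence each stratum; and it is transitive on $O(\tau)\cong\mathrm{Hom}(M\cap\tau^{\perp},\mathbb{C}^{*})$ because the restriction map $\mathrm{Hom}(M,\mathbb{C}^{*})\to\mathrm{Hom}(M\cap\tau^{\perp},\mathbb{C}^{*})$ is surjective ($\mathbb{C}^{*}$ being a divisible group). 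Transitivity identifies every stratum as a single orbit, giving items (1) and (2) once one notes that the orbits $O(\tau)$ are intrinsic (independent of the chart $\sigma\succ\tau$), so the local stratifications glue to $X_\Sigma=\bigsqcup_{\sigma\in\Sigma}O(\sigma)$; computing $\dim O(\tau)=\rk(M\cap\tau^{\perp})=r-\dim\tau$ gives item (4), and item (3) is the case $\tau=\{O\}$, where $S_{\{O\}}=M$ forces $u$ into $\mathbb{C}^{*}$ so that $U_{\{O\}}=O(\{O\})=T_N$.

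For item (7) I would compute directly. The subgroup $\lambda^{n}(t)$ acts as $\chi^{m}\mapsto t^{\langle m,n\rangle}$, so $\lim_{t\to0}\chi^{m}(\lambda^{n}(t))$ equals $1$ when $\langle m,n\rangle=0$, equals $0$ when $\langle m,n\rangle>0$, and fails to exist when $\langle m,n\rangle<0$. Hence the limit exists in $U_\sigma$ iff $\langle m,n\rangle\geq0$ for all $m\in S_\sigma=\sigma^{\vee}\cap M$, i.e. iff $n\in(\sigma^{\vee})^{\vee}=\sigma$. When $n\in\sigma$ the limiting homomorphism is $1$ on $n^{\perp}\cap S_\sigma$ and $0$ elsewhere; if $\tau\prec\sigma$ is the face with $n\in\relint\tau$ then $n^{\perp}\cap\sigma^{\vee}=\tau^{\perp}\cap\sigma^{\vee}$, so the limit is the distinguished point of $O(\tau)$, namely its identity element.

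Item (5) then follows by combining the stratification with (7). If $\tau\prec\sigma$, choose $n\in\relint\sigma$; a short computation gives $\lim_{t\to0}\lambda^{n}(t)\cdot x_\tau=x_\sigma$, where $x_\tau\in O(\tau)$ is the distinguished point, so $O(\sigma)\ni x_\sigma\in\overline{O(\tau)}$ and, by $T_N$-invariance of the closure, $O(\sigma)\subset\overline{O(\tau)}$. Conversely, any limit of points of $O(\tau)$ vanishes on every $m\notin\tau^{\perp}$, so its non-vanishing face lies in $S_\sigma\cap\tau^{\perp}$, forcing its cone face to contain $\tau$; thus $O(\sigma)\subset\overline{O(\tau)}$ implies $\tau\prec\sigma$. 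The same closure relations show that $O(\sigma)$ lies in the closure of every other orbit of $U_\sigma$ and is therefore the unique closed orbit there, completing item (6). The main obstacle I anticipate is the combinatorial face correspondence of the first paragraph together with the verification that the $T_N$-action is transitive with the correct stabilizers on each stratum; once these are in place, items (3)--(7) reduce to the explicit limit computations indicated above.
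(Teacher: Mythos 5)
The paper offers no proof of this proposition at all --- it is quoted verbatim from Oda (Proposition 1.6) --- so there is no internal argument to compare with; your proposal should be measured against the standard proof in the cited literature, and it is essentially that proof (Oda \S 1.2--1.3, Cox--Little--Schenck \S 3.2): points of $U_\sigma$ as semigroup homomorphisms, classification of non-vanishing loci by faces, transitivity on each stratum via divisibility of $\mathbb{C}^{*}$ (correct: divisibility makes $\mathbb{C}^{*}$ an injective $\mathbb{Z}$-module, so restriction of characters is surjective without even invoking saturatedness of $M\cap\tau^{\perp}$ in $M$), and the $\lambda^{n}$-limit computation for items (5)--(7). The combinatorial crux you isolate is the right one, and your sketch of it goes through: given a semigroup face $F$ of $S_\sigma$, it is generated by the generators of $S_\sigma$ it contains; taking $f_{0}$ to be their sum and $G$ the face of $\sigma^{\vee}$ with $f_{0}\in\relint(G)$, the face property of $G$ gives $F\subseteq G\cap M$, and for $m\in G\cap M$ one has $kf_{0}-lm\in G\cap M\subseteq S_\sigma$ for suitable positive integers $k,l$, so the face property of $F$ forces $m\in F$. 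The identification of the group generated by $S_\sigma\cap\tau^{\perp}$ with $M\cap\tau^{\perp}$ holds because the dual face $\sigma^{\vee}\cap\tau^{\perp}$ is full-dimensional in $\tau^{\perp}$, so its lattice points generate the lattice (the $S+(-S)=M$ argument of the paper's Proposition 1, applied inside $\tau^{\perp}$).

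One inference is too quick, in the converse half of item (5). For a limit $x\in O(\sigma)$ of points of $O(\tau)$ you deduce $F(x)\subseteq S_\sigma\cap\tau^{\perp}$ and conclude that this ``forces its cone face to contain $\tau$''. It does not: since $F(x)=S_\sigma\cap\sigma^{\perp}$ spans $\sigma^{\perp}$, the containment only yields $\sigma^{\perp}\subseteq\tau^{\perp}$, i.e. $\tau\subseteq\mathbb{R}(\sigma)$, and a cone of $\Sigma$ lying in the span of $\sigma$ need not be a face of $\sigma$. The fix is already in your toolkit: if $O(\sigma)\subseteq\overline{O(\tau)}$ then $\overline{O(\tau)}$ meets the open set $U_\sigma$, hence $O(\tau)\cap U_\sigma\neq\varnothing$; by your stratification $U_\sigma=\bigsqcup_{\tau'\prec\sigma}O(\tau')$, the point lies in some $O(\tau')$ with $\tau'\prec\sigma$, and since orbits are equal or disjoint, $O(\tau)=O(\tau')$, whence $\tau=\tau'\prec\sigma$. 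Note that this last step leans on the injectivity of $\tau\mapsto O(\tau)$, which you assert under ``the orbits are intrinsic'' but never argue; it deserves its one-line proof (if $O(\tau)=O(\tau')$, the common orbit lies in $U_{\tau}\cap U_{\tau'}=U_{\tau\cap\tau'}$, and the stratification of $U_{\tau\cap\tau'}$ forces $\tau\prec\tau\cap\tau'$ and $\tau'\prec\tau\cap\tau'$, so $\tau=\tau'$). With these two patches the argument is complete and matches the standard proof of the cited result.
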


\subsection{Equivariant compactification and mappings}

In what follows, we consider toric varieties with incomplete fans (i.e. non-compact toric varieties). It turns out that such varieties can be compactified such that the resulting variety is again a toric variety. This is given by Sumihiro's theorem 
	\begin{thm}\cite[p. 17]{Oda}\label{equvcompact}
Suppose a connected linear algebraic group  $G$ acts algebraically on an irreducible normal algebraic variety $X$ of finite type over $\mathbb{C}$. Then $X$ can be embedded as a  $G$-invariant open subset of a complete irreducible normal algebraic variety $\widehat{X}$ on which $G$ acts algebraically.
	\end{thm}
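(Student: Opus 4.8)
The plan is to follow Sumihiro's original argument, whose architecture is to settle the quasi-projective case first by an explicit equivariant projective embedding, and then to bootstrap to an arbitrary normal $X$ by covering it with $G$-stable quasi-projective pieces and gluing their completions. A key elementary observation used throughout is that, because $G$ is \emph{connected}, the Zariski closure $\overline{S}$ of any $G$-stable subset $S\subset X$ is again $G$-stable: the action morphism $G\times X\to X$ sends the irreducible set $G\times\overline{S}$ into the closure of the image of $G\times S$, which is $\overline{S}$. This is what will let us take closures freely without destroying equivariance.

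First I would treat the case in which $X$ is itself normal and quasi-projective. The tool here is $G$-linearization of line bundles: one picks a very ample $L$ on $X$ and, after replacing it by a suitable tensor power, equips it with a $G$-linearization, following Mumford's theory (this is where connectedness of $G$ and the structure of the Picard group enter). A $G$-linearized very ample $L$ produces a $G$-equivariant locally closed embedding $X\hookrightarrow\mathbb{P}(V)$ into the projectivization of a finite-dimensional $G$-representation $V$. Taking the closure $\overline{X}\subset\mathbb{P}(V)$ yields a complete, $G$-stable variety containing $X$ as an open subset, and passing to the normalization $\widehat{X}\to\overline{X}$ preserves completeness while restoring normality; the $G$-action lifts to $\widehat{X}$ by the universal property of normalization. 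This already gives the theorem for quasi-projective normal $X$.

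The general case rests on Sumihiro's covering lemma: every point of a normal $G$-variety has a $G$-stable quasi-projective open neighborhood, so that $X=\bigcup_{i=1}^{k}U_i$ with each $U_i$ normal, quasi-projective and $G$-stable. By the previous step each $U_i$ embeds as a $G$-stable open subset of a normal complete $G$-variety $\widehat{U}_i$. I would then glue these completions in the manner of Weil and Nagata: assemble the open immersions $U_i\hookrightarrow\widehat{U}_i$ into a rational map from $X$ to $\prod_{i=1}^{k}\widehat{U}_i$, let $\widehat{X}$ be the normalization of the closure of its image, and check that $X$ sits inside $\widehat{X}$ as a $G$-stable open subset. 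Completeness holds because $\widehat{X}$ is closed in a product of complete varieties, $G$-stability follows from the connectedness remark of the first paragraph, and normality is forced by the final normalization.

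The main obstacle is the covering lemma itself, namely producing the $G$-stable quasi-projective open neighborhoods; this is the genuinely hard, technical heart of the theorem and is exactly where normality of $X$ is indispensable, the statement being false without it. Its proof requires a local analysis of orbit closures and the construction of enough $G$-semi-invariant rational functions to separate the relevant points, after which $G$-linearization re-enters. A secondary technical point lies in the gluing step: one must verify that the assembled map is a well-defined birational morphism onto its image and that $X$ is genuinely \emph{open} (not merely constructible) in the normalized closure, both of which follow from the facts that the $U_i$ cover $X$ and that each $U_i\hookrightarrow\widehat{U}_i$ is an open immersion.
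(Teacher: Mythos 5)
The paper does not prove this statement at all: it is quoted verbatim from Oda (p.~17) as Sumihiro's equivariant completion theorem and used as a black box (only its Corollary~\ref{corequvcompact}, the existence of a completing fan $\widehat{\Sigma}$, is actually needed, and in the toric setting that corollary has a purely combinatorial proof by adding cones to $\Sigma$). So there is no in-paper argument to compare yours against; what you have written is an outline of Sumihiro's original proof, and at the level of architecture it is the correct one: quasi-projective case via $G$-linearization of a power of a very ample line bundle and closure in $\mathbb{P}(V)$, the covering theorem producing $G$-stable quasi-projective opens, and a Nagata-style gluing of the local completions, with connectedness of $G$ used to keep closures $G$-stable.

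As a proof, however, your text is a roadmap rather than an argument: the two genuinely hard ingredients (the covering lemma and the linearization of line bundles) are named and deferred, which you acknowledge. The one place where you assert something that does not follow as easily as claimed is the gluing step. The map $X\dashrightarrow\prod_i\widehat{U}_i$ is only defined on $\bigcap_i U_i$, and the normalized closure of its image need not contain $X$ as an open subset merely because the $U_i$ cover $X$ and each $U_i\hookrightarrow\widehat{U}_i$ is an open immersion; a one-shot product closure can collapse or fail to separate points of $X\setminus\bigcap_i U_i$. Sumihiro handles this by induction on the number of charts, completing two pieces at a time via the join (closure of the graph of the birational map between the two completions) and checking openness of $X$ there; if you intend this to be a complete proof, that verification has to be carried out rather than asserted. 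For the purposes of this paper, though, the citation to Oda/Sumihiro is the intended treatment.
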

We have	
	\begin{cor}\label{corequvcompact}
	For any fan $(\Sigma,N)$  there exists a complete fan $(\widehat{\Sigma},N)$ such that $X_{\widehat{\Sigma}}$ is a $T_{N}$-equivariant compactification of $X_{\Sigma}$.
	\end{cor}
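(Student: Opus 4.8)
The plan is to deduce the corollary directly from Sumihiro's theorem (Theorem \ref{equvcompact}) together with the reconstruction theorem \cite[Theorem 1.5]{Oda} and the compactness criterion \cite[Theorem 1.11]{Oda}. First I would apply Theorem \ref{equvcompact} to the connected linear algebraic group $G=T_{N}$ acting on $X=X_{\Sigma}$, which is irreducible, normal, and of finite type over $\mathbb{C}$. This produces a complete irreducible normal algebraic variety $\widehat{X}$ together with a $T_{N}$-equivariant open embedding $X_{\Sigma}\hookrightarrow\widehat{X}$, where $T_{N}$ acts algebraically on $\widehat{X}$.

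Next I would observe that $\widehat{X}$ is itself a toric variety for $T_{N}$. Indeed, $X_{\Sigma}$ contains the open dense orbit $O(\{O\})=T_{N}$, and since $X_{\Sigma}$ is open in $\widehat{X}$, this orbit remains open in $\widehat{X}$ and is an orbit isomorphic to $T_{N}$. Applying the reconstruction theorem \cite[Theorem 1.5]{Oda} to $\widehat{X}$ then yields a unique fan $(\widehat{\Sigma},N)$ and a $T_{N}$-equivariant isomorphism $\widehat{X}\cong X_{\widehat{\Sigma}}$. Because $\widehat{X}$ is complete (hence compact), the compactness criterion \cite[Theorem 1.11]{Oda} forces $\widehat{\Sigma}$ to be a complete fan, i.e. $|\widehat{\Sigma}|=N_{\mathbb{R}}$.

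It remains to check that this compactification genuinely extends the original variety, i.e. that the image of $X_{\Sigma}$ under the isomorphism $\widehat{X}\cong X_{\widehat{\Sigma}}$ is the open toric subvariety attached to $\Sigma$ regarded as a subfan of $\widehat{\Sigma}$. The image of $X_{\Sigma}$ is a $T_{N}$-invariant open subset of $X_{\widehat{\Sigma}}$; by the orbit-cone correspondence (Proposition \ref{orbconecoresp}, parts (5) and (6)) such subsets are precisely the toric subvarieties associated with subfans of $\widehat{\Sigma}$. The open subset therefore determines a subfan $\Sigma'\subset\widehat{\Sigma}$ with $X_{\Sigma'}$ equivariantly isomorphic to $X_{\Sigma}$, and the uniqueness clause of \cite[Theorem 1.5]{Oda} identifies $\Sigma'$ with $\Sigma$.

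I expect the main obstacle to be exactly this last identification of $\Sigma$ as a subfan of $\widehat{\Sigma}$. Sumihiro's theorem functions here as a black box producing only an abstract equivariant open embedding, so the real work is to descend from the variety-level statement to the combinatorial level and confirm that the cones of $\Sigma$ reappear among those of $\widehat{\Sigma}$, rather than merely obtaining some unrelated complete toric variety that happens to contain $X_{\Sigma}$ as an open set. This verification rests on the dictionary between $T_{N}$-invariant open subsets and subfans together with the uniqueness of the associated fan, both of which are supplied by the results quoted above.
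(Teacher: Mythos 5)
Your argument is correct and is exactly the deduction the paper intends: the corollary is stated immediately after Sumihiro's theorem with the proof left implicit, and your chain (Sumihiro $\Rightarrow$ complete normal equivariant completion, reconstruction theorem \cite[Theorem 1.5]{Oda} $\Rightarrow$ it is $X_{\widehat{\Sigma}}$ with $\widehat{\Sigma}$ complete by \cite[Theorem 1.11]{Oda}, orbit--cone correspondence plus uniqueness $\Rightarrow$ $\Sigma$ is a subfan of $\widehat{\Sigma}$) is the standard way to fill it in. Your attention to the final identification of $\Sigma$ inside $\widehat{\Sigma}$ is a welcome extra level of care rather than a deviation from the paper's route.
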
		

Holomorphic mappings compatible with torus actions can also be described in terms of fans. Let  $(\Sigma,N)$ and $(\Sigma',N')$ be two fans.
	\begin{defn}
	A fan morphism $\phi\colon (\Sigma',N')\to(\Sigma,N)$ is a $\mathbb{Z}$-linear homomorphism of lattices $\phi\colon N'\to N$ such that the corresponding scalar extension $\phi_{\mathbb{R}}\colon N_{\mathbb{R}}'\to N_{\mathbb{R}}$ has the property: for each $\sigma'\in\Sigma'$ there exists $\sigma\in\Sigma$ such that $\phi_{\mathbb{R}}(\sigma')\subset\sigma$.
	\end{defn}
The following theorem describes morphisms of toric varieties
	\begin{thm}\cite[Theorem 1.13]{Oda}
	\begin{itemize}
	\item A fan morphism $\phi\colon (\Sigma',N')\to(\Sigma,N)$ gives rise to a holomorphic mapping $$\phi_{*}\colon X_{\Sigma',N'}\to X_{\Sigma,N}$$ whose restriction to the open subset $T_{N'}$ coincides with the homomorphism of algebraic tori $$\phi\otimes1\colon T_{N'}\to T_{N}.$$
	Besides, $\phi_{*}$ is equivariant with respect to the actions of $T_{N'}$ and $T_{N}$ on the toric varieties.
	\item Conversely, let $f'\colon T_{N'}\to T_{N}$ be a homomorphism of algebraic tori, and $f\colon X_{\Sigma',N'}\to X_{\Sigma,N}$ be a holomorphic mapping equivariant with respect to $f'$. Then there exists a unique $\mathbb{Z}$-linear homomorphism $$\phi\colon N'\to N,$$ which gives rise to a fan morphism $$\phi\colon (\Sigma',N')\to(\Sigma,N)$$ such that $f=\phi_{*}$.
	\end{itemize}
	\end{thm}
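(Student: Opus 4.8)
The plan is to prove the two implications separately, in both cases passing between the lattice/semigroup data and the varieties through the functorial description of the affine charts $U_{\sigma}$ in Proposition \ref{affvar}, and exploiting that $T_{N'}$ is a dense open subset of $X_{\Sigma'}$ on which every map in sight is determined. For the \emph{forward direction} I would first dualize $\phi\colon N'\to N$ to its transpose $\phi^{*}\colon M\to M'$, characterized by $\langle\phi^{*}(m),n'\rangle=\langle m,\phi(n')\rangle$. The fan-morphism condition $\phi_{\mathbb R}(\sigma')\subset\sigma$ gives at once $\phi^{*}(\sigma^{\vee})\subset(\sigma')^{\vee}$, so $\phi^{*}$ restricts to a semigroup homomorphism $S_{\sigma}\to S_{\sigma'}$. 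Precomposition then sends $u'\in U_{\sigma'}$ to $u'\circ\phi^{*}\in U_{\sigma}$, and in the coordinates of Proposition \ref{affvar} this map is given by monomials, hence is holomorphic; on the open torus it is exactly the character-pullback form of $\phi\otimes1$. To glue the local maps into a global $\phi_{*}\colon X_{\Sigma'}\to X_{\Sigma}$ I would avoid checking cocycle conditions by hand: each local map agrees with $\phi\otimes1$ on the dense set $T_{N'}$, and $X_{\Sigma}$ is Hausdorff, so any two of them coincide on overlaps (this also makes the construction independent of the chosen target cone $\sigma$). Equivariance holds on $T_{N'}$ because $\phi\otimes1$ is a homomorphism, and extends everywhere by continuity.

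For the \emph{converse} I would first recover $\phi$ from $f'$ through characters: for $m\in M$ the composite $\chi^{m}\circ f'$ is a character of $T_{N'}$, hence equals $\chi^{\phi^{*}(m)}$ for a unique $\phi^{*}(m)\in M'$, and $\phi^{*}\colon M\to M'$ is $\mathbb Z$-linear. Setting $\phi:=(\phi^{*})^{\vee}\colon N'\to N$ one checks directly that $\phi\otimes1=f'$ and, by the same computation, that $f'\circ\lambda^{n'}=\lambda^{\phi(n')}$ for every $n'\in N'$. Uniqueness is automatic, since any $\phi$ with $f=\phi_{*}$ must satisfy $\phi\otimes1=f|_{T_{N'}}=f'$, and $\phi^{*}$ is pinned down by $f'$ because characters pull back uniquely. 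Here I use that $f$ restricts to $f'$ on $T_{N'}$, which is the content of being equivariant with respect to $f'$ once the identity is sent to the identity; I would note that without $f(e')=e$ the conclusion $f=\phi_{*}$ could only hold up to a torus translation.

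The step I expect to be the main obstacle is showing that $\phi$ is actually a fan morphism, i.e. that for each $\sigma'\in\Sigma'$ there is a \emph{single} $\sigma\in\Sigma$ with $\phi_{\mathbb R}(\sigma')\subset\sigma$ (rather than merely $\phi_{\mathbb R}(\sigma')\subset|\Sigma|$). I would fix a lattice point $n'_{0}$ in the relative interior of $\sigma'$. By Proposition \ref{orbconecoresp}(7) the limit $\gamma:=\lim_{t\to0}\lambda^{n'_{0}}(t)$ exists in $X_{\Sigma'}$; continuity of $f$ together with $f\circ\lambda^{n'_{0}}=\lambda^{\phi(n'_{0})}$ shows $\lim_{t\to0}\lambda^{\phi(n'_{0})}(t)=f(\gamma)$ exists in $X_{\Sigma}$. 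Choosing $\sigma\in\Sigma$ with $f(\gamma)\in U_{\sigma}$ and applying Proposition \ref{orbconecoresp}(7) in $U_{\sigma}$ gives $\phi(n'_{0})\in\sigma$. To pass from $n'_{0}$ to all of $\sigma'$, I would rerun the argument with $n'_{0}+k\,n'_{1}$ for an arbitrary lattice point $n'_{1}\in\sigma'$ and $k\in\mathbb Z_{\geq0}$: adding a point of the cone to a relative-interior point keeps it in the relative interior, so each $\phi(n'_{0})+k\,\phi(n'_{1})$ lies in $\sigma$; scaling by $1/k$ and letting $k\to\infty$ forces $\phi(n'_{1})\in\sigma$ since $\sigma$ is a closed cone. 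As $\sigma'\cap N'$ generates $\sigma'$ over $\mathbb R_{\geq0}$, this yields $\phi_{\mathbb R}(\sigma')\subset\sigma$.

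Finally, with $\phi$ now a fan morphism I would form $\phi_{*}$ by the forward direction and compare it with $f$: both are continuous, both restrict to $f'=\phi\otimes1$ on the dense torus $T_{N'}$, and $X_{\Sigma}$ is Hausdorff, whence $f=\phi_{*}$ on all of $X_{\Sigma'}$. The separatedness-plus-density arguments and the character bookkeeping are routine; the only genuinely delicate point is the single-cone condition of the previous paragraph, where the limit characterization in Proposition \ref{orbconecoresp}(7), the relative-interior trick, and the closedness of $\sigma$ all enter essentially.
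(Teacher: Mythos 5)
The paper does not actually prove this statement: it is imported verbatim from Oda \cite[Theorem 1.13]{Oda} as background material, so there is no in-paper proof to compare against; what can be done is to check your reconstruction against the standard argument, and it holds up. Your forward direction is exactly the usual one: the fan condition $\phi_{\mathbb R}(\sigma')\subset\sigma$ dualizes to $\phi^{*}(\sigma^{\vee})\subset(\sigma')^{\vee}$, precomposition with $\phi^{*}$ gives monomial (hence holomorphic) maps between the semigroup models of Proposition \ref{affvar}, and gluing by density of $T_{N'}$ plus Hausdorffness of $X_{\Sigma}$ is legitimate. For the converse, the genuinely delicate step --- producing a \emph{single} $\sigma$ with $\phi_{\mathbb R}(\sigma')\subset\sigma$ --- is handled correctly, but you should make explicit the fact your argument silently relies on: since $n'_{0}+kn'_{1}\in\relint(\sigma')$ for all $k\geq0$, Proposition \ref{orbconecoresp}(7) gives the \emph{same} limit point (the identity of $\orb(\sigma')$) for every $k$, and it is this independence of $k$ that licenses reusing the same chart $U_{\sigma}$ before scaling by $1/k$ and invoking closedness of $\sigma$. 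The texts (Oda; Cox--Little--Schenck \cite{Cox}, Theorem 3.3.4) reach the same conclusion by instead showing $f(U_{\sigma'})\subset U_{\sigma}$ from $f(x_{\sigma'})\in U_{\sigma}$ using equivariance together with the orbit-closure relations of Proposition \ref{orbconecoresp}(5)--(6); your scaling trick trades that orbit bookkeeping for an extra limit argument, and both routes are sound. Finally, your caveat about the hypothesis is well taken and worth keeping: equivariance $f(t'x)=f'(t')f(x)$ alone does not force $f|_{T_{N'}}=f'$ (a torus translate $t_{0}\cdot\phi_{*}$, or the constant map to a torus-fixed point, is equivariant), so the statement must be read with the normalization $f(e')=e$, equivalently $f|_{T_{N'}}=f'$, built into the word ``equivariant'', which is how Oda's original formulation is to be understood.
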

In other words, an equivariant holomorphic mapping of normal toric varieties in local coordinates is given by monomial functions.

\subsection{Resolution of singularities and subdivisions of fans}\label{equresolution}

	\begin{defn}\indent
	\begin{itemize}
	\item Let $(\Sigma,N)$ and $(\Sigma',N)$ be two fans. A fan $(\Sigma',N)$ is called a subdivision of a fan  $(\Sigma,N)$ if each cone of  $\Sigma'$ is contained in a cone of $\Sigma$, and $|\Sigma'|=|\Sigma|$.
	\item We call a subdivision $(\Sigma',N)$ smooth if each cone $\sigma'\in\Sigma'$ is smooth.
	\end{itemize}
	\end{defn}
	
Consider the identity mapping of the lattice $id\colon N\to N$. This mapping is a fan morphism  $id\colon (\Sigma',N)\to(\Sigma,N)$, since $\Sigma'$ is a subdivision of  $\Sigma$, and we get an equivariant holomorphic mapping $$id_{*}\colon X_{\Sigma'}\to X_{\Sigma}.$$

The following statements hold
	\begin{prop}\cite[Corollary 1.18]{Oda}
	Let $(\Sigma',N)$ be a smooth subdivision of $(\Sigma,N)$. Then $id_{*}\colon X_{\Sigma'}\to X_{\Sigma}$ is a proper birational mapping and is an equivariant resolution of singularities for $X_{\Sigma}$.
	\end{prop}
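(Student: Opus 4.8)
The plan is to verify four properties of $id_*$ in turn — that it is equivariant, that it is birational, that its source $X_{\Sigma'}$ is smooth, and, the only substantial point, that it is proper — since together these say precisely that $id_*$ is an equivariant resolution of singularities. The first three I would dispatch immediately. Equivariance is the content of the theorem on morphisms of toric varieties applied to the fan morphism $id\colon(\Sigma',N)\to(\Sigma,N)$. Because the underlying lattice homomorphism is the identity, its torus part $\phi\otimes1$ is the identity of $T_N$; as $T_N=U_{\{O\}}$ is a dense open subset of both $X_{\Sigma'}$ and $X_\Sigma$ (the cone $\{O\}$ belongs to every fan), $id_*$ restricts to the identity on this common torus, hence is birational and in fact an isomorphism over the dense open set $T_N$. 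Since the subdivision $\Sigma'$ is smooth, every cone of $\Sigma'$ is smooth, so the smoothness criterion for toric varieties shows $X_{\Sigma'}$ is a smooth complex manifold. As $T_N$ lies in the smooth locus of $X_\Sigma$ and $id_*$ is an isomorphism there, once properness is in hand $id_*$ will be a proper birational morphism from a smooth variety that is an isomorphism over a dense open subset, i.e. a resolution of singularities.

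Thus the whole content is concentrated in properness. Properness is local on the base, and for a maximal cone $\sigma\in\Sigma$ the preimage $id_*^{-1}(U_\sigma)$ is the toric variety attached to the subfan $\Sigma'_\sigma:=\{\sigma'\in\Sigma'\mid\sigma'\subset\sigma\}$, whose support is exactly $\sigma$. Hence it suffices to prove that each map $X_{\Sigma'_\sigma}\to U_\sigma$ is proper, and I would do this through the valuative criterion. Concretely, given a discrete valuation ring $R$ with fraction field $K$ and a commutative square consisting of a $K$-point of $X_{\Sigma'_\sigma}$ and an $R$-point of $U_\sigma$, I must produce a unique $R$-point of $X_{\Sigma'_\sigma}$ lifting both. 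Using equivariance together with the density of the torus, the test reduces to curves coming from one-parameter subgroups: after translating by a torus element one may assume the $K$-point lands in $T_N$ and is given by $\lambda^n$ for a lattice point $n\in N$ recording the valuation.

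At this point the limit criterion of Proposition \ref{orbconecoresp}(7) does the remaining work: the limit $\lim_{t\to0}\lambda^n(t)$ exists in $U_\sigma$ exactly when $n\in\sigma$, and it exists in $X_{\Sigma'_\sigma}$ exactly when $n\in|\Sigma'_\sigma|$. Since $\Sigma'$ is a subdivision, $|\Sigma'_\sigma|=\sigma$, so the limit exists upstairs precisely when it exists downstairs; this furnishes the required lift, whose uniqueness is guaranteed by the Hausdorff (separatedness) property of the normal toric variety $X_{\Sigma'_\sigma}$. I expect the main obstacle to be exactly this reduction step — making rigorous the passage from an arbitrary valuative test diagram to a one-parameter subgroup and checking that the center of the valuation is faithfully recorded by a point $n\in N$ — rather than the final combinatorial comparison, which is the trivial identity $|\Sigma'|=|\Sigma|$.

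As an alternative that sidesteps the valuative machinery, one may instead complete $\Sigma$ to a complete fan $\widehat\Sigma$ using Corollary \ref{corequvcompact}, choose a smooth complete subdivision $\widehat{\Sigma}'$ refining $\widehat\Sigma$ and restricting to $\Sigma'$ over $|\Sigma|$, and observe that $X_{\widehat{\Sigma}'}\to X_{\widehat\Sigma}$ is a morphism of compact spaces and hence proper. Since $X_{\Sigma'}$ is the preimage of the open set $X_\Sigma$ under this morphism, properness of $id_*$ then follows by base change; the cost of this route is the combinatorial input that a smooth completion $\widehat{\Sigma}'$ compatible with $\Sigma'$ exists, which I would rather avoid, preferring the self-contained valuative argument built on Proposition \ref{orbconecoresp}(7).
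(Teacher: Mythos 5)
The paper does not actually prove this proposition: it is quoted from Oda's book (Corollary 1.18 there), where it is deduced from the general properness criterion for toric morphisms ($\phi_{*}$ is proper if and only if $\phi_{\mathbb{R}}^{-1}(|\Sigma|)=|\Sigma'|$). Your argument reconstructs exactly that proof in the special case $\phi=id$, so in substance you are following the same route as the cited source. The easy parts (equivariance, birationality over $T_{N}$, smoothness of $X_{\Sigma'}$ from smoothness of all cones of $\Sigma'$) are handled correctly, and the local-on-the-base reduction to $X_{\Sigma'_{\sigma}}\to U_{\sigma}$ with $|\Sigma'_{\sigma}|=\sigma$ is the right decomposition (note that $\sigma'\subset\sigma$ for every $\sigma'\in\Sigma'$ meeting $\relint$ of a face of $\sigma$ does require the standard convexity argument, but it is routine). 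The one place your sketch is genuinely thin is the step you yourself flag: a $K$-point of $X_{\Sigma'_{\sigma}}$ in a valuative test diagram need not land in $T_{N}$, and when it does it is a homomorphism $\gamma\colon M\to K^{*}$, not literally a one-parameter subgroup translated by a torus element. The standard repairs are (i) invoking the refined valuative criterion for separated morphisms of finite type, which allows testing only those $K$-points that hit the dense torus (or else inducting over orbit closures, which are again toric and compatibly subdivided), and (ii) replacing the one-parameter-subgroup language by the observation that $n:=v\circ\gamma\in \mathrm{Hom}_{\mathbb{Z}}(M,\mathbb{Z})=N$ and that $\gamma$ extends to an $R$-point of $U_{\sigma'}$ precisely when $v(\gamma(m))\geq 0$ for all $m\in S_{\sigma'}$, i.e. when $n\in(\sigma'^{\vee})^{\vee}=\sigma'$ --- this is the DVR form of Proposition \ref{orbconecoresp}(7). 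With those two points made precise the argument is complete; your alternative route via a compatible smooth completion is also valid but, as you note, imports the nontrivial combinatorial fact that such a completion of $\Sigma'$ exists.
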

	\begin{thm}\cite[p. 23]{Oda}\label{resol}
Any normal toric variety  $X_{\Sigma}$ admits an equivariant resolution of singularities.
	\end{thm}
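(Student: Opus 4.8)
The plan is to reduce the theorem to a purely combinatorial statement and then invoke the preceding proposition. That proposition tells us that as soon as we exhibit a \emph{smooth} subdivision $(\Sigma',N)$ of $(\Sigma,N)$ --- a subdivision all of whose cones are smooth --- the induced morphism $id_{*}\colon X_{\Sigma'}\to X_\Sigma$ is automatically a proper birational equivariant resolution of singularities. Since a subdivision preserves the support and every face of a smooth cone (being generated by a subset of a $\mathbb{Z}$-basis of $N$) is again smooth, it is enough to arrange that every maximal cone of $\Sigma'$ be smooth. Thus the whole problem becomes: show that every fan admits a smooth subdivision with the same support. I will construct $\Sigma'$ by a finite sequence of \emph{star subdivisions} centred at lattice points. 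The merit of working with star subdivisions is that they are automatically compatible with the fan structure: subdividing at a lattice point $w$ refines, in a consistent way, every cone that contains $w$, so at each stage the collection of cones is again a fan and shared faces of adjacent cones are refined identically.

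I would carry out the construction in two stages. \emph{First}, make the fan simplicial: for every cone $\sigma$ that is not generated by linearly independent primitive vectors, perform a pulling triangulation through one of its ray generators, adding no new rays, until each maximal cone is generated by exactly $\dim\sigma$ primitive vectors; this is a finite combinatorial process that preserves the support. \emph{Second}, smooth out the simplicial cones. For a simplicial cone $\sigma=\Cone(v_{1},\dots,v_{d})$ with primitive generators $v_{1},\dots,v_{d}$, introduce its multiplicity
\[
\mathrm{mult}(\sigma):=\bigl[\,N\cap\mathbb{R}(\sigma)\,:\,\mathbb{Z}v_{1}+\cdots+\mathbb{Z}v_{d}\,\bigr],
\]
the index of the lattice spanned by the generators inside the saturated lattice of the span $\mathbb{R}(\sigma)$. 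By the smoothness criterion stated above, $\sigma$ is smooth precisely when $\mathrm{mult}(\sigma)=1$.

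The heart of the proof, and the step I expect to be the main obstacle, is the induction that drives the multiplicity down to $1$. If $\mathrm{mult}(\sigma)>1$, then the half-open fundamental parallelepiped $\{\sum_{i} a_{i}v_{i} : 0\le a_{i}<1\}$ has lattice volume $\mathrm{mult}(\sigma)$, hence contains a nonzero lattice point $v=\sum_{i} a_{i}v_{i}$; let $w$ be the primitive generator of the ray $\mathbb{R}_{\ge0}v$. The star subdivision of $\sigma$ at $w$ replaces $\sigma$ by the simplicial cones $\sigma_{i}:=\Cone(v_{1},\dots,v_{i-1},w,v_{i+1},\dots,v_{d})$ for those $i$ with $a_{i}>0$. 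The key computation is that $\mathrm{mult}(\sigma_{i})<\mathrm{mult}(\sigma)$ for each such $i$: since $\mathbb{R}(\sigma_{i})=\mathbb{R}(\sigma)$, both multiplicities are determinants taken in one fixed basis of $N\cap\mathbb{R}(\sigma)$, and replacing $v_{i}$ by $w=v/k$ (with $k\ge 1$) gives $\mathrm{mult}(\sigma_{i})=(a_{i}/k)\,\mathrm{mult}(\sigma)<\mathrm{mult}(\sigma)$ because $0\le a_{i}<1$. Because multiplicities are positive integers, the well-ordering of $\mathbb{Z}_{>0}$ forces the maximal multiplicity occurring in the fan to drop after finitely many star subdivisions, so the process terminates with every cone of multiplicity $1$. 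Carrying out these star subdivisions consistently across shared faces --- automatic, as noted, since they are centred at lattice points --- produces the sought smooth subdivision $\Sigma'$ of $\Sigma$ with $|\Sigma'|=|\Sigma|$, and the theorem follows from the preceding proposition.
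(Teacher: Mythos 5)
Your proof is correct and is essentially the argument the paper relies on: the paper gives no proof of this theorem itself, quoting it from Oda (p.~23), and Oda's proof is exactly your two-stage scheme --- first simplicialize by star subdivisions along existing rays, then drive $\mathrm{mult}(\sigma)=[\,N\cap\mathbb{R}(\sigma):\mathbb{Z}v_{1}+\cdots+\mathbb{Z}v_{d}\,]$ down to $1$ by star subdivisions at primitive lattice points of the half-open fundamental parallelepiped, and feed the resulting smooth subdivision into the preceding proposition. The only step you leave implicit is that the star subdivision at $w$ also refines every \emph{other} cone $\sigma'$ containing $w$; this is harmless because $w\in\relint(\tau)$ for a face $\tau$ common to $\sigma$ and $\sigma'$ (as $\sigma\cap\sigma'$ is a face of $\sigma$ containing $w$), so $w=\sum_{i\in I}a_{i}v'_{i}$ with $0<a_{i}<1$ in the generators of $\sigma'$ as well, and the same determinant computation gives $\mathrm{mult}(\sigma'_{i})=a_{i}\,\mathrm{mult}(\sigma')<\mathrm{mult}(\sigma')$, so no multiplicity increases anywhere and your termination argument goes through.
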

	
\subsection{Holomorphic functions in toric varieties}
	
Recall the following
\begin{defn}
A discrete valuation on a field $K$ is a group homomorphism 
 $v\colon K^{*}\to \mathbb{Z}$ that is onto and satisfies  $v(fg)=v(f)+v(g)$, $v(f+g)\geq \min(v(f),v(g))$. 
\end{defn}

Let $X_{\Sigma}$ be a smooth toric variety and $\rho\in\Sigma(1)$. Then according to Proposition \ref{orbconecoresp}, $V(\rho)=\overline{O(\rho)}$ is a $T_{N}$-invariant prime divisor that defines a discrete valuation
$$v_{\rho}\colon \mathbb{C}(T_{N})^{*}\to \mathbb{Z}$$
on the field of rational functions $\mathbb{C}(T_{N})$ on $T_{N}$ such that $v_{\rho}(f)$  is equal to the order of vanishing of the  rational function $f$ along $V(\rho)$. More information on the valuation theory in the context of toric varieties can be found in \cite[\S 4.0]{Cox}.

Let $D\subset X_{\Sigma}$ be a domain and $D\cap V(\rho)\not=\varnothing$. The valuation $v_{\rho}$ can be extended to a valuation on the field of meromorphic functions $v_{\rho}\colon \mathcal{M}(D)^{*}\to \mathbb{Z}$: if $f\in \mathcal{M}(D)^{*}$ and $x\in V(\rho)_{reg}\cap D$, then by the Weierstrass division theorem we have $f=h^{n}\phi$ locally at the point $x$, where $n\in\mathbb{Z}$ and $h_{x}$ is irreducible in $\mathcal{O}_{X_{\Sigma},x}$ such that $V(\rho)=\{h=0\}$ locally, $\phi\in\mathcal{O}_{X_{\Sigma},x} $ and $\phi|_{V(\rho)}\neq0$. We set $v_{\rho}(f):=n$,  this is the order of vanishing of $f$ on $V(\rho)$.

Let us characterize holomorphic functions in $D$.

\begin{lem}\label{lemm1} Let $D\subset X_{\Sigma}$ be domain. Then $$\mathcal{O}(D)=\{f\in\mathcal{O}(D\cap T_{N})\cap\mathcal{M}(D)\mid v_{\rho}(f)\geq 0 \forall\rho\in \Sigma(1)\; such\; that\; D\cap V(\rho)\not=\varnothing\}$$
\end{lem}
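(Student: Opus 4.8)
The plan is to regard both sides as subsets of $\mathcal{O}(D\cap T_N)$ and prove they coincide. Since $T_N=O(\{O\})$ is the dense open orbit of the irreducible space $X_\Sigma$, the set $D\cap T_N$ is dense in $D$, so a holomorphic function on $D$ vanishing on it vanishes by continuity; hence the restriction homomorphism $\mathcal{O}(D)\to\mathcal{O}(D\cap T_N)$ is injective, and I identify $\mathcal{O}(D)$ with its image. The inclusion $\subseteq$ is then immediate: if $f\in\mathcal{O}(D)$ and $D\cap V(\rho)\neq\varnothing$, then $f$ is holomorphic near a regular point $x\in V(\rho)_{reg}\cap D$, so in the local factorization $f=h^{n}\phi$ the exponent $n=v_\rho(f)$ is non-negative, giving $v_\rho(f)\ge 0$ for every such $\rho\in\Sigma(1)$.

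The substance is the reverse inclusion. Let $f\in\mathcal{O}(D\cap T_N)$ satisfy $v_\rho(f)\ge 0$ for all relevant $\rho$; I must produce a holomorphic extension to $D$. I first read off the orbit stratification of $D\setminus T_N$ from Proposition~\ref{orbconecoresp}: every cone $\sigma\neq\{O\}$ contains a ray $\rho\prec\sigma$, and by part~(5) this gives $O(\sigma)\subset\overline{O(\rho)}=V(\rho)$, so $X_\Sigma\setminus T_N=\bigcup_{\rho\in\Sigma(1)}V(\rho)$; moreover, by the dimension formula $\dim O(\sigma)=r-\dim\sigma$ of part~(4), the orbits $O(\sigma)$ with $\dim\sigma\ge 2$ — equivalently the pairwise intersections $V(\rho)\cap V(\rho')$ for $\rho\neq\rho'$ — have codimension $\ge 2$. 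Accordingly I set $S:=\mathrm{Sing}(X_\Sigma)\cup\bigcup_{\rho\neq\rho'}\big(V(\rho)\cap V(\rho')\big)$, which is a closed analytic subset of codimension $\ge 2$, since the singular locus of a normal variety has codimension $\ge 2$ and each $V(\rho)\cap V(\rho')$ is a union of orbit closures of codimension $\ge 2$.

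Now I extend in two steps. On the complex manifold $X_\Sigma\setminus S$ the divisors $V(\rho)$ are smooth and pairwise disjoint, so near a point $x\in(V(\rho)\setminus S)\cap D$ I may choose local coordinates $(z_1,\dots,z_r)$ with $V(\rho)=\{z_1=0\}$, and then $T_N=\{z_1\neq 0\}$ locally because $V(\rho)$ is the only divisor through $x$. There $f$ is holomorphic on $\{z_1\neq 0\}$ and the hypothesis $v_\rho(f)\ge 0$ forbids a pole along $\{z_1=0\}$, so by the first Riemann removable-singularity theorem $f$ extends holomorphically across it. Performing this at every point of $(D\setminus S)\cap\bigcup_\rho V(\rho)$ yields a holomorphic function on $D\setminus S$. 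Finally, because $X_\Sigma$ is normal and $S$ has codimension $\ge 2$, the second Riemann extension theorem extends the function across $S\cap D$, producing $\tilde f\in\mathcal{O}(D)$ with $\tilde f|_{D\cap T_N}=f$; by the injectivity above this exhibits $f$ as an element of $\mathcal{O}(D)$.

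The main obstacle is the second step, the extension across the codimension-$\ge 2$ set $S$: this is exactly where normality of $X_\Sigma$ is indispensable, as the first Riemann theorem only disposes of the smooth codimension-$1$ divisorial locus, and across a codimension-$2$ set a bounded holomorphic function need not extend on a non-normal space. The supporting care required is to verify that $S$ is genuinely closed, analytic, and of codimension $\ge 2$ (via the orbit--cone dimension formula) and that $T_N$ is precisely the complement of $\bigcup_\rho V(\rho)$, so that the hypothesis ``$f\in\mathcal{O}(D\cap T_N)$'' is indeed holomorphy off the divisors, along which the valuation conditions are imposed.
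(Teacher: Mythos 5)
Your proof is correct and takes essentially the same approach as the paper, whose entire argument is the one-line remark that one inclusion is obvious and the converse follows from the Riemann extension theorem; you have simply supplied the details that remark compresses (the orbit stratification of $X_\Sigma\setminus T_N$, the first Riemann theorem across the smooth divisorial locus where $v_\rho(f)\ge 0$ rules out poles, and the second Riemann theorem together with normality across the codimension-$\ge 2$ set $S$).
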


\begin{proof}
One inclusion is obvious, the converse one follows from the Riemann extension theorem.
\end{proof}

A valuation $v_\rho$ for a $\rho\in\Sigma(1)$ may be identified with a point $u_\rho$ in $N$
\begin{lem}\cite[prop. 4.1.1]{Cox}
$v_{\rho}(t^{I})=\langle u_{\rho},I\rangle$, where $u_\rho$ is the minimal integer generator of $\rho\cap N$.
\end{lem}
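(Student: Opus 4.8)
The plan is to reduce to an explicit affine chart in which the divisor $V(\rho)$ becomes a coordinate hyperplane, and then factor the monomial there. Since $X_{\Sigma}$ is smooth, the ray $\rho\in\Sigma(1)$ is a smooth cone, so its minimal generator $u_\rho$ can be completed to a $\mathbb{Z}$-basis $\{n_{1}:=u_\rho,n_{2},\dots,n_{r}\}$ of $N$; let $\{m_{1},\dots,m_{r}\}$ be the dual basis of $M$. First I would compute the chart $U_\rho$ attached to $\rho=\Cone(u_\rho)$. Because $\rho^{\vee}=\{x\in M_{\mathbb{R}}\mid\langle x,u_\rho\rangle\geq0\}$ and $u_\rho=n_{1}$, the semigroup $S_\rho=\rho^{\vee}\cap M$ is generated by $m_{1},\pm m_{2},\dots,\pm m_{r}$, so by Proposition~\ref{affvar} one obtains $U_\rho\cong\mathbb{C}\times(\mathbb{C}^{*})^{r-1}$ with coordinates $\chi^{m_{1}},\dots,\chi^{m_{r}}$, the first taking values in $\mathbb{C}$ and the remaining ones in $\mathbb{C}^{*}$.

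Next I would identify the divisor inside this chart. By the orbit--cone correspondence (Proposition~\ref{orbconecoresp}) the only $T_{N}$-orbits contained in $U_\rho$ are $O(\{O\})=T_{N}$ and $O(\rho)$, with $\dim O(\rho)=r-1$; comparing dimensions against the factorisation above shows that $V(\rho)\cap U_\rho=O(\rho)=\{\chi^{m_{1}}=0\}$. In particular $h:=\chi^{m_{1}}$ is a global coordinate function on the smooth chart $U_\rho$ cutting out $V(\rho)$, and it is a legitimate local defining function in the sense used in the definition of $v_\rho$ via Weierstrass division.

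The computation is then immediate. Writing an arbitrary $I\in M$ in the dual basis as $I=\sum_{j=1}^{r}c_{j}m_{j}$, we have $\chi^{I}=(\chi^{m_{1}})^{c_{1}}(\chi^{m_{2}})^{c_{2}}\cdots(\chi^{m_{r}})^{c_{r}}$, and the factors $\chi^{m_{2}},\dots,\chi^{m_{r}}$ are nonvanishing at a regular point of $V(\rho)$, hence units there. Thus the order of vanishing of $t^{I}=\chi^{I}$ along $V(\rho)$ equals $c_{1}$. Since $\{m_{j}\}$ and $\{n_{j}\}$ are dual bases, $c_{1}=\langle I,n_{1}\rangle=\langle u_\rho,I\rangle$, which yields $v_\rho(t^{I})=\langle u_\rho,I\rangle$ as claimed.

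The only genuinely delicate point is the second step: one must verify that $\chi^{m_{1}}$ is a \emph{reduced} equation for $V(\rho)$, vanishing to first order, so that the Weierstrass-division definition of $v_\rho$ returns the exponent $c_{1}$ itself rather than a multiple of it. This is exactly where smoothness of $\rho$ enters in an essential way — it guarantees that $\chi^{m_{1}}$ is one of the coordinate functions on the manifold $U_\rho$, and not a higher-order or singular local equation.
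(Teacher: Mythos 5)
Your argument is correct and is essentially the standard proof of this fact — the paper itself gives no proof, citing \cite[Prop.~4.1.1]{Cox}, whose proof proceeds exactly as you do: pass to the chart $U_\rho\cong\mathbb{C}\times(\mathbb{C}^*)^{r-1}$, identify $V(\rho)\cap U_\rho$ with the coordinate hyperplane $\{\chi^{m_1}=0\}$, and read off the exponent. One small refinement: smoothness of $X_\Sigma$ is not actually needed here, since a primitive lattice vector can always be completed to a $\mathbb{Z}$-basis of $N$, so the ray $\rho$ is a smooth cone and $U_\rho$ is a smooth chart regardless of the other cones of $\Sigma$.
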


For any Laurent polynomial $f=\sum\limits_{I\in A} a_{I}t^{I}$ we have $v_{\rho}(f)=\min\limits_{I\in A}\langle u_{\rho},I\rangle$, which is attained for $I=I_0\in A$. Indeed,  
$$f=t^{I_{0}}\left(a_{I_0}+\sum\limits_{I\in A, I\not=I_0} a_{I}t^{I-I_0}\right)$$
and
$$v_{\rho}(f)=\langle u_{\rho},I_{0}\rangle+v_{\rho}\left(a_{I_0}+\sum\limits_{I\in A, I\not=I_0} a_{I}t^{I-I_0}\right).$$
Since  $v_{\rho}(t^{I-I_0})\geq 0$ and $a_{I_0}\neq0$, $a_{I_0}+\sum\limits_{I\in A, I\not=I_0} a_{I}t^{I-I_0}$ is a holomorphic function  at every point of $V(\rho)$ and non vanishing on $V(\rho)$. Hence, $v_{\rho}(f)=\langle u_{\rho},I_{0}\rangle$.

For series we have
\begin{lem} \label{lemm3}
If $f\in\mathcal{O}(D\cap T_{N})\cap\mathcal{M}(D)$ such that $v_{\rho}(f)\geq 0$ and is given by a Laurent series $\sum\limits_{I\in A} a_{I}t^{I}$ in a neighbourhood of $V(\rho)$, then $\langle u_{\rho},I\rangle\geq0 \,\forall I\in A$.
\end{lem}

\begin{proof}

The space $M_{\mathbb{R}}$ can be exhausted by compact sets. This induces an exhaustion of $A$ by finite sets, i.e. $A=\bigcup A_{n}$, $A_{n}\subset A_{n+1}$. The series $f=\sum\limits_{I\in A} a_{I}t^{I}$ is the limit  of partial sums $f_{n}=\sum\limits_{I\in A_{n}} a_{I}t^{I}$ in the topology of uniform convergence on compact sets in neighbourhood of $V(\rho)$. 

Assume that there exists $I\in A$ such that $\langle u_{\rho},I\rangle<0$. The monomial $t^{I}$ is a term in a partial sum $f_{n_0}$ for some $n_{0}\in \mathbb{N}$, then $v_{\rho}(f_{n_0})=\min\limits_{I\in A_{n_0}}\langle u_{\rho},I\rangle<0$. Moreover, for all $N>n_0$ we have $v_{\rho}(f_{N})<0$. This means that for all $N>n_0$ partial sums $f_{N}$ have a pole on $V(\rho)$. This contradiction proves the statement.
\end{proof}

Let $D\subset X_{\Sigma}$ be a domain such that $D\cap V(\rho)\not=\varnothing$ for all $\rho\in\Sigma(1)$.

\begin{cor}\label{mainlemm2}

If $f\in\mathcal{O}(D)$ and is given by a Laurent series $\sum\limits_{I\in A} a_{I}t^{I}$ in $D$, then $A\subset |\Sigma|^{\vee}\cap M$.
\end{cor}

\begin{proof}
By Lemma \ref{lemm1} we have $f\in\mathcal{O}(D\cap T_{N})\cap\mathcal{M}(D)$ such that $v_{\rho}(f)\geq 0$ for all $\rho\in\Sigma(1)$. The proof follows from the fact that $|\Sigma|^{\vee}=\{I\in M_{\mathbb{R}}\mid \langle u_{\rho},I\rangle\geq0 \forall \rho\in\Sigma(1) \}$ and from Lemma \ref{lemm3}.
\end{proof}

\subsection{Exhaustion of toric varieties by compact sets}\label{exhsubsect}

	\begin{defn}\label{exchcompact}
Let	$X$ be a topological space. A collection of compact subsets $\{V_{n}\}_{n=1}^{\infty}$ in $X$ is called an exhaustion of  $X$ if 
	\begin{enumerate}
	\item for any $n$ we have $V_{n}\subset \inter(V_{n+1})$;
	\item $X=\bigcup\limits_{n=1}^{\infty}V_{n}$.
	\end{enumerate}
	\end{defn}

We shall prove the existence of a certain exhaustion for toric varieties satisfying an additional property.

\begin{lem}
Let $X_{\Sigma}$ be a non-compact normal toric variety, and the complement  $\mathbb{R}^{p}\setminus |\Sigma|$ of its fan be connected. Then there exists an exhaustion of $X_{\Sigma}$ by compact sets  $\{V_n\}$ such that $X_{\Sigma}\setminus V_{n}$ are connected.
\end{lem}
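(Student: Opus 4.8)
The plan is to realise the complements $X_{\Sigma}\setminus V_{n}$ as shrinking \emph{punctured neighbourhoods of the boundary} added under compactification, and to deduce their connectivity from that of the fan complement $C:=\mathbb{R}^{p}\setminus|\Sigma|$ (note that $|\Sigma|$ is a union of cones, hence a scaling–invariant set, so $C$ is an open cone as well). First I would compactify: by Corollary \ref{corequvcompact} embed $X_{\Sigma}$ as a $T_{N}$-invariant open subset of a compact toric variety $\widehat{X}=X_{\widehat{\Sigma}}$ and set $Z:=\widehat{X}\setminus X_{\Sigma}$, a compact $(S^{1})^{p}$-invariant subset, where $(S^{1})^{p}\subset T_{N}$ is the maximal compact subtorus. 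Averaging any metric on the compact metrizable space $\widehat{X}$ over $(S^{1})^{p}$ produces an $(S^{1})^{p}$-invariant distance $d$, and since $Z$ is invariant so is $x\mapsto d(x,Z)$. Putting $V_{n}:=\{x\in X_{\Sigma}:d(x,Z)\ge 1/n\}$, each $V_{n}$ is closed in $\widehat{X}$, hence compact, and contained in $X_{\Sigma}$; since $d(x,Z)>0$ on $X_{\Sigma}$ one has $V_{n}\subset\inter(V_{n+1})$ and $\bigcup_{n}V_{n}=X_{\Sigma}$. Thus $\{V_{n}\}$ is an exhaustion and the entire problem reduces to the connectivity of $X_{\Sigma}\setminus V_{n}=\{x\in X_{\Sigma}:d(x,Z)<1/n\}=:W_{n}\cap X_{\Sigma}$, a punctured neighbourhood of $Z$.

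Next I would push the connectivity down to the dense torus and then to $N_{\mathbb{R}}\cong\mathbb{R}^{p}$. As $T_{N}$ is dense in $X_{\Sigma}$ and $W_{n}\cap X_{\Sigma}$ is open, one checks $W_{n}\cap T_{N}\subseteq W_{n}\cap X_{\Sigma}\subseteq\overline{W_{n}\cap T_{N}}$; since a set squeezed between a connected set and its closure is connected, it suffices to prove $W_{n}\cap T_{N}$ connected. Under the identification $T_{N}\cong(S^{1})^{p}\times N_{\mathbb{R}}$ the map $\mathrm{Log}\colon T_{N}\to N_{\mathbb{R}}$, $\mathrm{Log}(t)(m)=\log|\chi^{m}(t)|$, is the second projection, with connected compact fibres $(S^{1})^{p}$. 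Because $W_{n}$ is $(S^{1})^{p}$-invariant, the invariant set $W_{n}\cap T_{N}$ corresponds to a product $(S^{1})^{p}\times\Omega_{n}$ with $\Omega_{n}:=\mathrm{Log}(W_{n}\cap T_{N})\subseteq N_{\mathbb{R}}$; being a product with the connected factor $(S^{1})^{p}$, it is connected if and only if $\Omega_{n}$ is. Everything therefore comes down to showing that $\Omega_{n}\subseteq\mathbb{R}^{p}$ is connected.

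This last step is where the hypothesis on $C$ enters, and it is the main obstacle. The guiding dictionary is Proposition \ref{orbconecoresp}(7): computing $\mathrm{Log}(\lambda^{v}(e^{s}))=sv$, one sees that a ray $\{sv:s\ge0\}$ in $N_{\mathbb{R}}$ lifts to a curve whose limit in $\widehat{X}$ lies in $X_{\Sigma}$ precisely when $-v\in|\Sigma|$ and in $Z$ precisely when $-v\in C$. Hence, for large $n$, the directions in which $\Omega_{n}$ escapes to infinity are exactly those of the open cone $-C$, so $\Omega_{n}$ contains a truncated open cone $G$ over $-C$; as $C$ is connected, so is $-C$, and $G$ is a connected ``core at infinity'' sitting inside $\Omega_{n}$. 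It then remains to join an arbitrary $x\in\Omega_{n}$ to $G$ within $\Omega_{n}$. The natural mechanism is the flow $x\mapsto x+s\delta$ for a fixed lattice vector $\delta\in N$ with $-\delta\in\inter(C)$ (such $\delta$ exists because $C$ is a nonempty open cone), which on the torus is the one-parameter action $t\mapsto\lambda^{\delta}(e^{s})t$: by the dictionary $\lambda^{\delta}(e^{s})t\to Z$ as $s\to+\infty$, while the direction of $x+s\delta$ tends to that of $\delta\in -C$, so the tail of the flow line enters $G$.

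The delicate point — the crux of the lemma — is to guarantee that the \emph{entire} flow segment stays within distance $1/n$ of $Z$, i.e. inside $\Omega_{n}$, rather than wandering away in the middle; the raw metric distance need not be monotone along the flow. I expect to secure this either by replacing $d(\cdot,Z)$ by an exhaustion function built to be monotone along this one-parameter flow, or by bounding the maximal value of $d(\cdot,Z)$ along a flow line whose two ends both lie close to $Z$. A secondary technical burden is the careful treatment of the boundary directions $\partial C=\partial|\Sigma|$, where the lift of a ray limits to a positive-dimensional orbit of $X_{\Sigma}$ rather than cleanly into $X_{\Sigma}$ or into $Z$; these directions are precisely where $\Omega_{n}$ is attached to its core and must be shown not to disconnect it. Granting that $\Omega_{n}$ is connected, one extracts from $\{W_{n}\}$ a decreasing neighbourhood basis of $Z$ with $\overline{W_{n+1}}\subset W_{n}$ and $\bigcap_{n}W_{n}=Z$, whence $\{V_{n}=\widehat{X}\setminus W_{n}\}$ is the desired exhaustion with connected complements.
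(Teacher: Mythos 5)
Your reduction chain (compactify, build torus-invariant metric neighbourhoods $W_{n}$ of $Z$, squeeze $W_{n}\cap X_{\Sigma}$ between $W_{n}\cap T_{N}$ and its closure, quotient by the compact torus down to $\Omega_{n}\subset N_{\mathbb{R}}$) is sound as far as it goes, but the proof has a genuine gap exactly where you flag it: you never establish that $\Omega_{n}$ is connected. The flow argument is only a plan. There is no reason the segment $\{x+s\delta: 0\le s\le S\}$ stays inside $\Omega_{n}$: the set $\Omega_{n}$ is the Log-image of a \emph{metric} neighbourhood of $Z$, it is not invariant under translation by $\delta$, and $d(\cdot,Z)$ need not be monotone (or even bounded by $1/n$) along the flow line even though both of its ends are close to $Z$. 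Likewise the claim that $\Omega_{n}$ contains a truncated open cone over $-C$ is only uniform on compact subsets of directions in $\inter(-C)$ and degenerates near $\partial C$, which is precisely the ``secondary burden'' you mention and also remains unaddressed. Since connectivity of $X_{\Sigma}\setminus V_{n}$ is the entire content of the lemma (any compact exhaustion exists for trivial reasons), the proposal as written does not prove the statement.

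The paper avoids all of this with a much shorter mechanism that you may want to adopt. After compactifying to $X_{\Sigma'}$, the boundary $Z=X_{\Sigma'}\setminus X_{\Sigma}$ is the union of the orbits $O(\tau)$ over cones $\tau$ with $\relint(\tau)\subset\mathbb{R}^{p}\setminus|\Sigma|$, and connectedness of the fan complement gives connectedness of $Z$. Now take \emph{any} connected open neighbourhood $U$ of $Z$: since $U$ is a normal complex space and $Z$ is a thin analytic subset of it, the complement $U\setminus Z$ is connected (the criterion of connectedness in \cite[p.~81]{Grauert}); hence $V:=X_{\Sigma'}\setminus U$ is a compact subset of $X_{\Sigma}$ with connected complement $X_{\Sigma}\setminus V=U\setminus Z$. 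A nested sequence $U_{n+1}\Subset U_{n}$ of such neighbourhoods (obtained from paracompactness, or for that matter from your metric shells, whose connectedness is then irrelevant) yields the exhaustion. In other words, the connectivity you are trying to extract from the combinatorics of $C$ via the Log map comes for free from normality plus the single combinatorial input that $Z$ itself is connected; if you want to salvage your approach, the cleanest fix is to replace the entire third and fourth paragraphs by this thin-set argument applied to $U=W_{n}$.
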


\begin{proof}
Let $X_{\Sigma'}$ be a compactification of $X_{\Sigma}$, then $Z:=X_{\Sigma'}\setminus X_{\Sigma}$ is a compact algebraic set. According to \cite[Proposition 1.6]{Oda} we have $$Z=\bigcup\limits_{\tau\in\Sigma',\relint(\tau)\subset\mathbb{R}^{p}\setminus |\Sigma|}O(\tau),$$ where $\relint(\tau)$ is a relative interior of the cone $\tau$. Due to connectedness of the complement of the fan, the set $Z$ is connected.

Let $U\subset X_{\Sigma'}$ be a connected open neighborhood of $Z$. Since $U$ is also a normal space and $Z$ is a thin set in $U$, by the criterion of connectedness \cite[p. 81]{Grauert} we obtain $U\setminus Z\subset X_{\Sigma}$ is a connected set. Then the set $V=X_{\Sigma'}\setminus U$ is a compact subset in $X_{\Sigma}$, and $X_{\Sigma}\setminus V$ is connected.

A sequence of nested connected neighborhoods $\{U_{n}\}_{n=1}^{\infty}$ of $Z$ with properties $\overline{U_{n+1}}\subset U_{n}$ and $\bigcap\limits_{n}U_{n}=Z$ induces a sequence of compact sets  $\{V_{n}\}_{n=1}^{\infty}$ in $X_{\Sigma}$ giving an exhaustion of $X_{\Sigma}$ such that $X_{\Sigma}\setminus V_{n}$ is connected.

Existence of such a sequence $\{U_{n}\}_{n=1}^{\infty}$ follows from metrizable of $X_{\Sigma'}$. Indeed, let $\rho$ be a metric compatible with topology of $X_{\Sigma'}$. Then we define sets $$U_{n}:=\{x\in X_{\Sigma'}\mid \inf\limits_{z\in Z}\rho(z,x)<\frac{1}{n}\}.$$

The sequence $\{U_{n}\}_{n=1}^{\infty}$ satisfies the properties $\overline{U_{n+1}}\subset U_{n}$ and $\bigcap\limits_{n}U_{n}=Z$.

\end{proof}

\section{Cohomology with compact supports}

First we recall some facts in sheaf theory \cite{Bredon, Demailly}, and then describe the group  $H^{1}_{c}(X_{\Sigma'},\mathcal{O})$.
	
\subsection{Sheaf cohomology}
	Let $\mathcal{A}$ be a sheaf of Abelian groups on a paracompact topological space $X$. Denote by $\mathcal{A}|_F$ the restriction of  $\mathcal{A}$ to a subset $F$ of $X$.
	
	\begin{thm}\cite[Theorem 9.5]{Bredon}\label{closedandsection}
	Let $F$ be a closed subset of $X$. Then 
	 $$H^{0}(F,\mathcal{A}|_F)=\mathop{\underrightarrow{\lim}}_{U\supset F}H^{0}(U,\mathcal{A}),$$
where $U$ ranges over neighborhoods of $F$.
	\end{thm}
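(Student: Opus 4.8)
The plan is to realize both sides as sections of the étalé space and to show that the obvious restriction map between them is a bijection. Write $\pi\colon\mathcal{E}\to X$ for the étalé space of $\mathcal{A}$, so that $\pi$ is a local homeomorphism and a section of $\mathcal{A}$ over an open $U\subseteq X$ is exactly a continuous map $s\colon U\to\mathcal{E}$ with $\pi\circ s=\mathrm{id}_U$. By the description of $\mathcal{A}|_F$ in the statement, its étalé space is $\pi^{-1}(F)\to F$, so a section $t\in H^0(F,\mathcal{A}|_F)$ is a continuous splitting $t\colon F\to\pi^{-1}(F)$ of $\pi$. Restriction of sections induces a map $\Phi\colon\mathop{\underrightarrow{\lim}}_{U\supset F}H^0(U,\mathcal{A})\to H^0(F,\mathcal{A}|_F)$, and the content of the theorem is that $\Phi$ is an isomorphism. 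I would treat injectivity and surjectivity separately, the latter being where paracompactness of $X$ is essential.

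For injectivity, suppose $s\in H^0(U,\mathcal{A})$ and $s'\in H^0(U',\mathcal{A})$ restrict to the same section over $F$. The locus $\{x\in U\cap U' : s(x)=s'(x)\}$ is open, because $\pi$ being a local homeomorphism forces two sections agreeing at a point of $\mathcal{E}$ to agree on a whole neighborhood. This open locus contains $F$ by hypothesis, hence contains an open neighborhood $V$ of $F$ with $F\subseteq V\subseteq U\cap U'$ on which $s=s'$; therefore $s$ and $s'$ already become equal in the colimit.

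Surjectivity is the heart of the matter. Given $t\in H^0(F,\mathcal{A}|_F)$, for each $x\in F$ the point $t(x)\in\mathcal{E}$ has a neighborhood in $\mathcal{E}$ carried homeomorphically by $\pi$ onto an open $V_x\ni x$ in $X$; inverting $\pi$ there yields a genuine section $s_x\in H^0(V_x,\mathcal{A})$ with $s_x(x)=t(x)$, and by continuity of $t$ and the local-homeomorphism property I may shrink $V_x$ so that $s_x=t$ on all of $F\cap V_x$. The family $\{V_x\}_{x\in F}$ is then an open cover of $F$ whose members carry local extensions of $t$, and the task is to glue these into a single section on an open neighborhood of $F$. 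The main obstacle, and the point where the hypotheses enter, is precisely this gluing: on an overlap $V_x\cap V_y$ the sections $s_x$ and $s_y$ need only agree along $F$, and away from $F$ they may differ, so a naive union is not a section.

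To overcome this I would use paracompactness of $X$ and closedness of $F$. Complete $\{V_x\}$ to an open cover of $X$ by adjoining $X\setminus F$, pass to a locally finite refinement $\{W_i\}$, discard the members contained in $X\setminus F$, and choose a shrinking $\{W_i'\}$ with $\overline{W_i'}\subseteq W_i$ still covering $F$, each surviving $W_i$ equipped with a section $s_i$ agreeing with $t$ on $F\cap W_i$. Then set
$$U:=\bigl\{x\in\textstyle\bigcup_i W_i' : s_i(x)=s_j(x)\ \text{whenever}\ x\in\overline{W_i'}\cap\overline{W_j'}\bigr\}.$$
On $U$ the local sections patch to a well-defined $s\in H^0(U,\mathcal{A})$ with $s|_F=t$, which exhibits a preimage of $t$ and finishes surjectivity. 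I expect the two delicate verifications to be that $U$ is open and that $F\subseteq U$: the first uses local finiteness to reduce, at each point, to finitely many equalities together with the openness of agreement loci from the injectivity step (the closedness of the locally finite union of the $\overline{W_i'}$ not containing the point isolates the relevant indices), while the second uses that every $s_i$ coincides with $t$ on $F$, so all relevant sections agree along $F$.
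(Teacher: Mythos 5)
The paper does not prove this statement itself; it is quoted directly from Bredon with a citation. Your argument is correct and is essentially the standard proof from that reference: injectivity from the openness of the agreement locus of two sections of the \'etal\'e space, and surjectivity by extending $t$ locally, adjoining $X\setminus F$ to get an open cover of $X$, passing to a locally finite refinement with a shrinking (paracompactness, hence normality, of $X$), and taking $U$ to be the set of points where all relevant local extensions agree, which is open and contains $F$.
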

\noindent In other words, a section $s\in H^{0}(F,\mathcal{A}|_F)$ can be thought of as a section $s'\in H^{0}(U,\mathcal{A})$, where $U$ is some neighborhood of $F$.

\begin{defn}
	A family $\Phi$ of closed subsets of $X$ is called a family of supports if any closed subset of an element of $\Phi$ belongs to $\Phi$ and if a union of two elements of $\Phi$ belongs to $\Phi$.
\end{defn}	

For a family of supports $\Phi$ one defines cohomology with supports in $\Phi$ (see, e.g.,  \cite[Section 2, \S2]{Bredon}). If $\Phi$ is the family of compact subset of $X$ such cohomology is called cohomology with compact supports and denoted  $H^{*}_{c}(X,\mathcal{A})$. For a subset $Y$ of $X$ one can consider the groups $H^{*}_{c}(Y,\mathcal{A}|_Y)$, in this case we shall write simply $H^{*}_{c}(Y,\mathcal{A})$. If $\Phi$ is the family of closed subsets of a compact subset $K$ of $X$, we get cohomology with supports in $K$ and denote it by $H^{*}_{K}(X,\mathcal{A})$ 

If $\{V_n\}_{n=1}^{\infty}$ is a compact exhaustion of $X$ then there is the following relation:
\begin{prop}\cite[p. 11]{BanStan}\label{exhlemm}
There is canonical isomorphism
$$\mathop{\underrightarrow{\lim}}_{V_{n}}H^{p}_{V_n}(X,\mathcal{F})\cong H^{p}_{c}(X,\mathcal{F}).$$
\end{prop}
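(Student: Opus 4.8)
The plan is to compute both sides of the claimed isomorphism with a single flabby (flasque) resolution of $\mathcal{F}$ and then reduce the statement to the purely formal fact that a filtered colimit of complexes of abelian groups commutes with cohomology. For \emph{any} family of supports $\Phi$ the functor $\Gamma_{\Phi}(X,-)$ of sections with support in $\Phi$ is left exact, and flabby sheaves are acyclic for it; hence, fixing a flabby resolution $0\to\mathcal{F}\to\mathcal{I}^{0}\to\mathcal{I}^{1}\to\cdots$, one has $H^{p}_{\Phi}(X,\mathcal{F})=H^{p}\bigl(\Gamma_{\Phi}(X,\mathcal{I}^{\bullet})\bigr)$ \emph{simultaneously} for the family $\Phi=c$ of all compact subsets and for each family $\Phi_{n}$ of closed subsets of $V_{n}$ (the latter computing $H^{p}_{V_{n}}$ by definition). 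Thus the whole statement follows once I identify the complexes themselves.

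The key step is an identification at the level of sections: for every sheaf $\mathcal{G}$,
$$\Gamma_{c}(X,\mathcal{G})=\bigcup_{n}\Gamma_{V_{n}}(X,\mathcal{G})=\mathop{\underrightarrow{\lim}}_{n}\Gamma_{V_{n}}(X,\mathcal{G}).$$
Indeed, a section $s$ has closed support $\mathrm{supp}(s)$; if this support is compact then, since $X=\bigcup_{n}\inter(V_{n+1})$, compactness forces $\mathrm{supp}(s)\subset\inter(V_{N+1})\subset V_{N+1}$ for some $N$, so $s\in\Gamma_{V_{N+1}}(X,\mathcal{G})$. Conversely any $s\in\Gamma_{V_{n}}(X,\mathcal{G})$ has support inside the compact set $V_{n}$, hence is compactly supported. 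Because $V_{n}\subset V_{n+1}$ the families $\Phi_{n}$ are nested, so the increasing union is a filtered colimit whose structure maps are induced by the inclusions $\Phi_{n}\hookrightarrow\Phi_{n+1}$. Applying this to each term $\mathcal{I}^{q}$ yields an isomorphism of complexes $\Gamma_{c}(X,\mathcal{I}^{\bullet})\cong\mathop{\underrightarrow{\lim}}_{n}\Gamma_{V_{n}}(X,\mathcal{I}^{\bullet})$.

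It then remains to pass to cohomology. Filtered colimits in the category of abelian groups are exact, hence commute with the formation of kernels and images and therefore with cohomology of a complex; consequently
$$H^{p}_{c}(X,\mathcal{F})=H^{p}\bigl(\Gamma_{c}(X,\mathcal{I}^{\bullet})\bigr)=H^{p}\Bigl(\mathop{\underrightarrow{\lim}}_{n}\Gamma_{V_{n}}(X,\mathcal{I}^{\bullet})\Bigr)=\mathop{\underrightarrow{\lim}}_{n}H^{p}\bigl(\Gamma_{V_{n}}(X,\mathcal{I}^{\bullet})\bigr)=\mathop{\underrightarrow{\lim}}_{n}H^{p}_{V_{n}}(X,\mathcal{F}).$$
This composite isomorphism is canonical: at each stage the arrow $H^{p}_{V_{n}}(X,\mathcal{F})\to H^{p}_{c}(X,\mathcal{F})$ is exactly the one induced by the enlargement of support families $\Phi_{n}\hookrightarrow c$, and these are compatible with the transition maps of the direct system.

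The only genuinely delicate point is the very first claim, that a single flabby resolution simultaneously computes $H^{p}_{c}$ and all the $H^{p}_{V_{n}}$. The families $\Phi_{n}$ of closed subsets of $V_{n}$ need not be paracompactifying, so I cannot argue through softness and must instead invoke the stronger fact that flabby sheaves are $\Gamma_{\Phi}$-acyclic for an \emph{arbitrary} family of supports $\Phi$ (see \cite{Bredon}). I would state this acyclicity explicitly; its proof rests on the observation that for a short exact sequence $0\to\mathcal{A}\to\mathcal{B}\to\mathcal{C}\to0$ with $\mathcal{A}$ flabby the sequence of $\Gamma_{\Phi}$-sections remains exact for every $\Phi$ (a global lift can be corrected by a section of $\mathcal{A}$, using flabbiness, so that it vanishes off the support), whence the vanishing of higher $H^{p}_{\Phi}$ on flabby sheaves follows by the usual dimension-shifting argument. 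Everything else is formal.
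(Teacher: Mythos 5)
Your argument is correct, and it is worth noting that the paper offers no proof of this proposition at all: it is quoted verbatim from B\u anic\u a--St\u an\u a\c sil\u a, so there is nothing to compare against except the standard argument, which is essentially what you reproduce. The three ingredients are all in order: (i) the set-level identification $\Gamma_{c}(X,\mathcal{G})=\varinjlim_{n}\Gamma_{V_{n}}(X,\mathcal{G})$, which uses exactly the two defining properties of the exhaustion ($V_{n}\subset\inter(V_{n+1})$ to trap a compact support inside some $V_{N}$, and $X=\bigcup_{n}V_{n}$ so that the interiors cover); (ii) exactness of filtered colimits of abelian groups; and (iii) the fact that a single flabby resolution computes $H^{p}_{\Phi}$ for \emph{every} family of supports simultaneously. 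You correctly single out (iii) as the only delicate point: the families $\Phi_{n}$ of closed subsets of $V_{n}$ are not paracompactifying, so soft resolutions would not do, and one really needs that flabby sheaves are $\Gamma_{\Phi}$-acyclic for arbitrary $\Phi$ (Bredon, Sheaf Theory, II.5); your sketch of the correction-by-flabbiness argument for the exactness of $\Gamma_{\Phi}$ on short exact sequences with flabby kernel is the right one. Two small points you leave implicit but should state: the compact sets form a legitimate family of supports only because $X$ is Hausdorff (so that compacts are closed), and the sections with supports in $\Phi_{n}$ are literally the sections with support contained in $V_{n}$ because supports are closed and $V_{n}$ is closed. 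Neither is a gap in the present setting.
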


For later use we need two long exact sequences.
	\begin{thm}\cite[Section 2, \S 10]{Bredon}\label{pairexact}
	Let $X$ be locally compact, $F\subset X$ be closed, and $U=X\setminus F$. Then the following sequence is exact
	\begin{equation*}
	\xymatrix@C=0.5cm{
\cdots \ar[r] &  H^{p}_{c}(U,\mathcal{A}) \ar[r] &  H^{p}_{c}(X,\mathcal{A}) \ar[r] & H^{p}_{c}(F,\mathcal{A}) \ar[r] & H^{p+1}_{c}(U,\mathcal{A}) \ar[r] & \cdots }
	\end{equation*}
	\end{thm}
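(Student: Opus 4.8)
The statement is the long exact sequence of the pair $(X,F)$ for cohomology with compact supports, and my plan is to derive it, in the standard sheaf-theoretic way, from a short exact sequence of sheaves on $X$. Write $j\colon U\hookrightarrow X$ for the inclusion of the open set $U=X\setminus F$ and $i\colon F\hookrightarrow X$ for the inclusion of the closed complement. The fundamental object is the short exact sequence
$$0\to j_{!}(\mathcal{A}|_{U})\to\mathcal{A}\to i_{*}(\mathcal{A}|_{F})\to 0,$$
where $j_{!}$ denotes extension by zero. I would verify its exactness on stalks: at a point $x\in U$ the first arrow is an isomorphism while the third sheaf has vanishing stalk, and at $x\in F$ the first sheaf has vanishing stalk while the restriction map $\mathcal{A}\to i_{*}(\mathcal{A}|_{F})$ is surjective onto $\mathcal{A}_{x}$.

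Next I would apply the derived functors of $\Gamma_{c}(X,-)$. Since $X$ is locally compact and paracompact, the groups $H^{*}_{c}(X,-)$ are computed by c-soft resolutions, and a short exact sequence of sheaves yields the associated long exact sequence
$$\cdots\to H^{p}_{c}(X,j_{!}(\mathcal{A}|_{U}))\to H^{p}_{c}(X,\mathcal{A})\to H^{p}_{c}(X,i_{*}(\mathcal{A}|_{F}))\to H^{p+1}_{c}(X,j_{!}(\mathcal{A}|_{U}))\to\cdots.$$

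It then remains to identify the two outer families of terms with $H^{*}_{c}(U,\mathcal{A})$ and $H^{*}_{c}(F,\mathcal{A})$ respectively. For the closed subset this is straightforward: $i$ is a closed, hence proper, map, so on the level of sections $\Gamma_{c}(X,i_{*}\mathcal{F})=\Gamma_{c}(F,\mathcal{F})$, and since pushforward along a closed embedding is exact and carries c-soft sheaves to c-soft sheaves, it induces $H^{p}_{c}(X,i_{*}(\mathcal{A}|_{F}))\cong H^{p}_{c}(F,\mathcal{A}|_{F})$. For the open subset the key observation is that a section of $j_{!}(\mathcal{A}|_{U})$ over $X$ with compact support is nothing but a section of $\mathcal{A}|_{U}$ whose support is compact in $X$; such a support lies in $U$ and is therefore compact in $U$ as well, which gives $\Gamma_{c}(X,j_{!}\mathcal{F})=\Gamma_{c}(U,\mathcal{F})$. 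Because $j_{!}$ is exact and preserves c-softness, this identification propagates to $H^{p}_{c}(X,j_{!}(\mathcal{A}|_{U}))\cong H^{p}_{c}(U,\mathcal{A}|_{U})$.

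I expect the delicate point to be exactly these identifications of the derived functors, namely verifying that extension by zero and closed pushforward preserve the class of c-soft sheaves used to compute $H^{*}_{c}$, so that they commute with passage to cohomology and not merely with the section functor $\Gamma_{c}$. Once that compatibility is secured, substituting the two isomorphisms into the long exact sequence above produces the asserted sequence, with connecting homomorphisms of the form $H^{p}_{c}(F,\mathcal{A})\to H^{p+1}_{c}(U,\mathcal{A})$.
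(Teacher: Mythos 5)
Your argument is correct and is precisely the standard derivation found in the cited source (Bredon, II.10): the short exact sequence $0\to j_{!}(\mathcal{A}|_{U})\to\mathcal{A}\to i_{*}(\mathcal{A}|_{F})\to 0$, the long exact sequence for $\Gamma_{c}$, and the identifications of the outer terms via the fact that $j_{!}$ and $i_{*}$ are exact, commute with $\Gamma_{c}$, and preserve $c$-softness on a locally compact space. The paper offers no proof of its own here --- it simply cites Bredon --- so there is nothing further to compare.
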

	\begin{thm}\cite[pp. 52-53]{BanStan}\label{propmain1}
Let $K$ be a closed subset of $X$, then the following sequence is exact

\begin{equation*}
	\xymatrix@C=0.5cm{
0 \ar[r] &  H^{0}_{K}(X,\mathcal{F}) \ar[r] &  H^{0}(X,\mathcal{F}) \ar[r] & H^{0}(X\setminus K,\mathcal{F}) \ar[r] & H^{1}_{K}(X,\mathcal{F}) \ar[r] & \cdots }
\end{equation*}
\end{thm}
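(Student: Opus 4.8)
The plan is to obtain this long exact sequence as the cohomology sequence of a short exact sequence of complexes built from a flabby resolution of $\mathcal{F}$. Recall that, in the Bredon framework recalled above, $H^{\bullet}_{K}(X,\mathcal{F})$ is the cohomology with supports in the family $\Phi$ of closed subsets of $K$, so that $\Gamma_{K}(X,\mathcal{F})$ denotes the group of global sections whose support is contained in $K$, and $H^{\bullet}_{K}(X,\mathcal{F})$ is the right derived functor of $\Gamma_{K}(X,-)$, computed by any flabby (equivalently injective) resolution. I would therefore fix a flabby resolution $0\to\mathcal{F}\to\mathcal{I}^{\bullet}$ once and for all.

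The key step is to produce, in each degree $q$, the short exact sequence
$$0\to\Gamma_{K}(X,\mathcal{I}^{q})\to\Gamma(X,\mathcal{I}^{q})\xrightarrow{r}\Gamma(X\setminus K,\mathcal{I}^{q})\to0,$$
where the first arrow is the inclusion and $r$ is restriction to the open set $X\setminus K$. Exactness on the left and in the middle is immediate from the definition of support: a global section of $\mathcal{I}^{q}$ restricts to $0$ on $X\setminus K$ precisely when its support lies in $K$, which is the defining condition for membership in $\Gamma_{K}(X,\mathcal{I}^{q})$. Surjectivity of $r$ is exactly the flabbiness of $\mathcal{I}^{q}$: every section over the open subset $X\setminus K$ extends to a global section. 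As $q$ varies these assemble into a short exact sequence of complexes of abelian groups.

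I would then apply the long exact cohomology sequence associated to this short exact sequence of complexes, and identify the three families of terms. By construction $H^{q}\bigl(\Gamma(X,\mathcal{I}^{\bullet})\bigr)=H^{q}(X,\mathcal{F})$ and $H^{q}\bigl(\Gamma_{K}(X,\mathcal{I}^{\bullet})\bigr)=H^{q}_{K}(X,\mathcal{F})$ by definition. For the third family one uses that the restriction $\mathcal{I}^{\bullet}|_{X\setminus K}$ of a flabby sheaf to an open set is again flabby, hence an acyclic resolution of $\mathcal{F}|_{X\setminus K}$, so that $H^{q}\bigl(\Gamma(X\setminus K,\mathcal{I}^{\bullet})\bigr)=H^{q}(X\setminus K,\mathcal{F})$. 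The connecting homomorphisms of the long exact sequence supply the maps $H^{q}(X\setminus K,\mathcal{F})\to H^{q+1}_{K}(X,\mathcal{F})$, and the sequence begins with $0\to H^{0}_{K}\to H^{0}(X)\to H^{0}(X\setminus K)$ because the relevant negative-degree term vanishes, giving exactly the asserted sequence.

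The only genuinely non-formal input is the pair of soft-sheaf facts that flabbiness is inherited by restriction to open subsets and that flabby sheaves are acyclic for both $\Gamma$ and $\Gamma_{K}$; the surjectivity of $r$ at the resolution level is where flabbiness is essential and is the step I expect to require the most care, since the analogous restriction map on $\mathcal{F}$ itself is generally \emph{not} surjective. Once these standard facts about flabby sheaves on the paracompact space $X$ are in place, everything else is the formal machinery of the cohomology sequence of a short exact sequence of complexes.
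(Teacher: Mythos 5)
Your proof is correct and is the standard argument; the paper itself offers no proof of this statement, citing it directly from B\u anic\u a--St\u an\u a\c sil\u a, and the derivation given there is exactly the one you describe: apply $\Gamma_K(X,-)$, $\Gamma(X,-)$, $\Gamma(X\setminus K,-)$ to a flabby resolution, observe that flabbiness gives the termwise short exact sequence of complexes, and take the long exact cohomology sequence. The only nitpick is the parenthetical ``flabby (equivalently injective)'': injective sheaves are flabby but not conversely; what you need (and correctly use) is that flabby sheaves are acyclic for $\Gamma$, for $\Gamma_K$, and for restriction to open sets, so they compute the same derived functors.
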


\subsection{The group $H^{1}_{c}(X_{\Sigma'},\mathcal{O})$}
	Before giving a description of this cohomology group, we prove a lemma.
	\begin{lem}\label{mainlemm1}
Let $Z\subset\mathbb{C}^{n}$ be a union of coordinate subspaces, and  $U$ be its neighborhood. Then there exists a complete Reinhardt domain $W$ centered at the origin such that $Z\subset W\subset U$.
  \end{lem}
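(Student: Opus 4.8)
The plan is to reduce the problem, via the modulus map, to a purely real combinatorial/topological statement in the positive orthant, and then to construct the required domain as the largest downward-closed open set fitting inside a Reinhardt-invariant shrinking of $U$. Throughout I write $\pi\colon\mathbb{C}^{n}\to\mathbb{R}_{\geq0}^{n}$, $\pi(z)=(|z_{1}|,\dots,|z_{n}|)$, and I let the real torus $\mathbb{T}^{n}=(S^{1})^{n}$ act on $\mathbb{C}^{n}$ by rotating arguments. Recall that $W$ is a complete Reinhardt domain precisely when $W=\pi^{-1}(\Omega)$ for some nonempty open set $\Omega\subset\mathbb{R}_{\geq0}^{n}$ that is a \emph{lower set}, i.e.\ $r\in\Omega$ and $0\leq s_{j}\leq r_{j}$ for all $j$ imply $s\in\Omega$. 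Since each coordinate subspace is invariant both under $\mathbb{T}^{n}$ and under coordinatewise shrinking, the set $Z$ is $\mathbb{T}^{n}$-invariant and saturated, so $Z=\pi^{-1}(F)$ where $F:=\pi(Z)$ is a union of coordinate faces of $\mathbb{R}_{\geq0}^{n}$ and is itself a lower set. Thus it suffices to produce an open lower set $\Omega$ with $F\subset\Omega\subset\pi(U)$-type data, which I make precise below.

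First I would replace $U$ by a $\mathbb{T}^{n}$-invariant open neighbourhood of $Z$. Set
$$U^{*}:=\{z\in U\mid \rho\cdot z\in U\ \text{for all}\ \rho\in\mathbb{T}^{n}\},$$
the largest $\mathbb{T}^{n}$-invariant subset of $U$. It contains $Z$ because $Z$ is $\mathbb{T}^{n}$-invariant, and it is open by a tube-lemma argument: for $z_{0}\in U^{*}$ the orbit $\mathbb{T}^{n}\cdot z_{0}$ is compact and lies in the open set $U$, so some $\delta$-neighbourhood of the orbit lies in $U$; as the rotations are isometries, every $z$ with $|z-z_{0}|<\delta$ satisfies $\rho\cdot z\in U$ for all $\rho$, whence $z\in U^{*}$. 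Since $\pi$ is the orbit map of the $\mathbb{T}^{n}$-action, it is an open map identifying the quotient with $\mathbb{R}_{\geq0}^{n}$; as $U^{*}$ is open and saturated, $\Omega^{*}:=\pi(U^{*})$ is open in $\mathbb{R}_{\geq0}^{n}$, it is contained in $\pi(U)$, and $U^{*}=\pi^{-1}(\Omega^{*})\subset U$ with $F\subset\Omega^{*}$.

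Next I would take the largest lower set inside $\Omega^{*}$, namely
$$\Omega:=\{\,r\in\mathbb{R}_{\geq0}^{n}\mid [0,r]\subset\Omega^{*}\,\},\qquad [0,r]:=\{s\in\mathbb{R}_{\geq0}^{n}\mid 0\leq s_{j}\leq r_{j}\ \forall j\}.$$
By construction $\Omega$ is a lower set and $\Omega\subset\Omega^{*}$. It contains $F$: if $r$ lies in a coordinate face then the whole box $[0,r]$ lies in the same face, hence in $F\subset\Omega^{*}$, so $r\in\Omega$. The one nontrivial point — and the main obstacle — is that $\Omega$ is open. This follows from a compactness estimate: if $[0,r_{0}]\subset\Omega^{*}$, then, $[0,r_{0}]$ being compact and $\Omega^{*}$ open, some $\varepsilon$-neighbourhood of $[0,r_{0}]$ lies in $\Omega^{*}$; for $r$ with $|r-r_{0}|$ small and any $s\in[0,r]$, the truncation $s'_{j}:=\min(s_{j},r_{0,j})$ lies in $[0,r_{0}]$ and satisfies $|s-s'|<\varepsilon$, so $s\in\Omega^{*}$. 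Hence $[0,r]\subset\Omega^{*}$ and $r\in\Omega$, proving openness.

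Finally I would set $W:=\pi^{-1}(\Omega)$. Then $W$ is open (as $\pi$ is $1$-Lipschitz, the preimage of an open subset of $\mathbb{R}_{\geq0}^{n}$ is open), it is complete Reinhardt because $\Omega$ is a lower set, and it is connected — indeed star-shaped about the origin, since $r\in\Omega$ forces the segment $[0,r]\subset\Omega$. The chain of inclusions
$$Z=\pi^{-1}(F)\subset\pi^{-1}(\Omega)=W\subset\pi^{-1}(\Omega^{*})=U^{*}\subset U$$
then gives exactly the complete Reinhardt domain required by the statement.
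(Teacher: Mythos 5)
Your proof is correct, but it takes a genuinely different route from the one in the paper. The paper argues pointwise ``from below'': for each $z\in Z$ lying in a coordinate subspace $\mathbb{C}^{|J|}_{z_J}\subset U$ it writes down an explicit polycylinder centered at the origin, with radii $|z_i|$ in the $J$-directions and small radii $e_i$ in the remaining ones, verifies by a Euclidean distance estimate (comparing the $e_i$ with $\rho(A,\partial U)$ for $A$ the polydisc $\{|w_i|<|z_i|\mid i\in J\}$) that this polycylinder stays inside $U$, and takes $W$ to be the union of these polycylinders over all $z\in Z$. You instead work ``from above'': you saturate $U$ under the $\mathbb{T}^{n}$-action (tube lemma), push down to the quotient $\mathbb{R}_{\geq0}^{n}$ via the modulus map, and take the maximal lower set $\Omega=\{r\mid[0,r]\subset\Omega^{*}\}$, whose openness follows from compactness of the boxes $[0,r_{0}]$ together with the truncation $s_{j}\mapsto\min(s_{j},r_{0,j})$. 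Both arguments rest on the same compactness input --- a positive distance from a compact piece of the data to the complement of $U$ --- but yours is more canonical: it produces the largest complete Reinhardt domain inside the Reinhardt-saturation of $U$, cleanly separates the group-action step from the combinatorics of lower sets, and sidesteps the bookkeeping of the paper's estimate (where, for instance, the strict inequalities $|w_i|<|z_i|$ mean each $z$ is covered only by polycylinders attached to other points of $Z$). What the paper's version buys in exchange is complete explicitness: no appeal to openness of the orbit map or to the quotient topology is needed.
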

  \begin{proof}
  For a subset $J\subset [n]=\{1,\cdots,n\}$ we denote by $\mathbb{C}^{|J|}_{z_J}$ the $|J|$-dimensional coordinate subspace with coordinates $z_J=(z_{i}\mid i\in J)$. Then $Z=\bigcup\limits_{J\in F}\mathbb{C}^{|J|}_{z_J}$ where $F$ is some set of subsets of $[n]$. 
    
  Consider a point $z=(z_1,\cdots,z_{n})\in Z$, then $z\in\mathbb{C}^{|J|}_{z_{J}} \subset U$ for some $J\in F$. There exists an open polycylinder  $$W_{z}:=\{w=(w_1,\cdots, w_n)\in\mathbb{C}^{n}\mid |w_{i}|< e_{i}\forall i\in [n] - J \text{ and } \{|w_{i}|<|z_{i}|\forall i\in  J\}$$ such that for sufficiently small $e_{i}$ this polycylinder is contained in $U$. 
  
Indeed, let $A:=\{|w_{i}|<|z_{i}|\mid i\in J\}\subset \mathbb{C}^{|J|}_{z_{J}}$ and consider  $\rho(A,\partial U)=\inf\limits_{a\in A, b\in\partial U}\|a-b\|$ (here $\|a-b\|:=\sqrt{\sum\limits_{i} |a_{i}-b_{i}|^{2}}$ is the Euclidean metric in $\mathbb{C}^{n}$). Then we let $e_{i}\leq\frac{\rho(A,\partial U)}{2\sqrt{n-|J|}},\forall i\in [n]-J$. 

Assume that $w\in W_{z}$ but $w\not\in U$, then $\rho(A,\partial U)\leq \rho(A,w)$ (indeed, $\rho(A,\partial U)\leq\rho(A,b)$ for all points $b\in \partial U$, in particular we have $\rho(A,\partial U)\leq\rho(A,\partial U\cap L)\leq \rho(A,w)$ where $L$ is the perpendicular to the $A$ from the point $w$). We have  $w=(w_{[n]-J},w_{J})\in \mathbb{C}^{n-|J|}_{z_{[n]-J}} \times\mathbb{C}^{|J|}_{z_{J}} $ then $w=w_1+w_2$ where $w_{1}=(w_{[n]-J},0), w_{2}=(0, w_{J})$. Since $\|w-a\|\leq\|w_1\|+\|w_{2}-a\|$ for $a\in A$, we obtain 
\begin{multline*}
\inf\limits_{a\in A}\|w-a\|\leq \inf\limits_{a\in A}(\|w_1\|+\|w_{2}-a\|)=\|w_1\|=\\=\sqrt{\sum\limits_{i\in [n]-J} |w_{i}|^{2}}\leq \sqrt{\sum\limits_{i\in [n]-J} e_{i}^{2}}\leq\frac{\rho(A,\partial U)}{2}.
\end{multline*}

Therefore $\rho(A,\partial U)\leq\frac{\rho(A,\partial U)}{2}$, so we have arrived at a contradiction, and  $W_{z}\subset U$. 
  
Having done the same for all $z\in Z$, we get a family of polycylinders $\{W_{z}\}_{z\in Z}$ centered at the origin. Their union $W:=\bigcup\limits_{z\in Z}W_{z}$ is an open complete Reinhardt domain centered at the origin that contains the set $Z$.
  \end{proof}
This implies that if $f\in\mathcal{O}(U)$ then a smaller neighbourhood $U'\subset W\subset U$ the function is represented by an absolutely convergent Taylor series.

	Now we turn to the description of $H^{1}_{c}(X_{\Sigma'},\mathcal{O})$. Here we use a long exact sequence of a pair, as Marciniak did in \cite{Marci2} for smooth toric surfaces.
	
	Let  $N=\mathbb{Z}^{p}$, $\Sigma'$ be a fan with one connected component of its complement, and $X_{\Sigma'}$ the corresponding toric variety. According to Corollary \ref{corequvcompact} from Theorem \ref{equvcompact} the fan $\Sigma'$ can be completed to become a complete fan $\Sigma''$. Denote by $\Sigma$ the fan that consists of those cones of $\Sigma''$ whose supports lie in the closed set $|\Sigma''|\setminus\inter(|\Sigma'|)$. 
	
The variety $X_{\Sigma'}$ is an open subspace the compact complex space $X_{\Sigma''}$. The complement $Z:=X_{\Sigma''}\setminus X_{\Sigma'}$ is a connected compact $T_N$-invariant analytic set in $X_{\Sigma''}$. 
	
	By Theorem \ref{pairexact} we have 
	\begin{multline}\label{seqshort}
\xymatrix@C=0.5cm{
  0 \ar[r] & H^{0}_{c}(X_{\Sigma'},\mathcal{O}) \ar[rr] && H_{c}^{0}(X_{\Sigma''},\mathcal{O}) \ar[rr] && H^{0}_{c}(Z,\mathcal{O}) \ar[r] &\\ \ar[r] & H^{1}_{c}(X_{\Sigma'},\mathcal{O}) \ar[rr] && H_{c}^{1}(X_{\Sigma''},\mathcal{O}) \ar[rr] && H^{1}_{c}(Z,\mathcal{O}) \ar[r] & \cdots }
	\end{multline}
	Since $X_{\Sigma'}$ is not compact, $H^{0}_{c}(X_{\Sigma'},\mathcal{O})=0$. Moreover, on a compact space $X_{\Sigma''}$ we have $H_{c}^{*}(X_{\Sigma''},\mathcal{O})=H_{}^{*}(X_{\Sigma''},\mathcal{O})$, which is $\mathbb C$ in dimension 0 and vanishes for higher dimensions \cite[Corollary 2.9]{Oda}. Therefore,  $H_{c}^{0}(X_{\Sigma''},\mathcal{O})=\mathbb{C}$ and $H_{c}^{1}(X_{\Sigma''},\mathcal{O})=0$. From (\ref{seqshort}) we get 
	\begin{equation*}
\xymatrix@C=0.5cm{
  0 \ar[r] & \mathbb{C} \ar[rr] && H^{0}(Z,\mathcal{O}) \ar[rr] && H^{1}_{c}(X_{\Sigma'},\mathcal{O}) \ar[r] & 0 }.
    \end{equation*}
Thus, $H^{1}_{c}(X_{\Sigma'},\mathcal{O})\cong H^{0}(Z,\mathcal{O})/\mathbb{C}$. On the other hand, by Theorem \ref{closedandsection} $$H^{0}(Z,\mathcal{O})=\mathop{\underrightarrow{\lim}}\limits_{U\supset Z}H^{0}(U,\mathcal{O}).$$ Since the set $Z$ together with its neighborhood lies in  $X_{\Sigma}$, we can work with $X_{\Sigma}$ instead of $X_{\Sigma''}$. 

By Proposition \ref{orbconecoresp} $$Z=\bigcup\limits_{\tau\in\Sigma,\relint(\tau)\in\inter(|\Sigma|)}O(\tau)\subset X_{\Sigma},$$ 
where $O(\tau)$ is the orbit of the action of $(\mathbb{C}^{*})^{n}$ on $X_{\Sigma}$ that corresponds to a cone $\tau$.

Assume that  $X_{\Sigma}$ is smooth and consider the equivalence class $[f,V]\in H^{0}(Z,\mathcal{O})$, that is a function $f$ holomorphic in a neighborhood  $V$ of $Z$. 
  
  Recall that $$Z\cap U_{\sigma}=\bigcup\limits_{\tau<\sigma,\relint(\tau)\in\inter(|\Sigma|)}O(\tau)\subset X_{\Sigma},$$
where $U_{\sigma}$ is an affine chart corresponding to $\sigma\in \Sigma$, i.e. $Z\cap U_{\sigma}$ is a union of coordinate subspaces in $U_{\sigma}\cong\mathbb{C}^{p}$.
  
By Lemma \ref{mainlemm1}, for $V\cap U_{\sigma}$ there exists a complete Reinhardt domain $W_{\sigma}$ such that $Z\cap U_{\sigma}\subset W_{\sigma} \subset V\cap U_{\sigma}$. Therefore in $W_{\sigma}$ the function $f$ is given by a convergent power series. Choose a neighborhood $D$ of $Z$ such that $D\subset\bigcup\limits_{\sigma}W_{\sigma}$ and $D\cap U_{\sigma}\subset W_{\sigma}$. In $D\cap U_{\sigma}$ the function $f$ is given by the same series as in $W_{\sigma}$. All these series written in the coordinates $t$ of the torus $T_{N}$ are one and the same power series; thus, $f=\sum\limits_{I\in A}a_{I}t^{I}$. By Corollary \ref{mainlemm2} we get $A\subset|\Sigma|^{\vee}\cap M$. Thus, an equivalence class $[f,V]$ can be written as $[\sum\limits_{I\in A}a_{I}t^{I},D]$, provided the series converges in $D$. Denote now $C:=\mathbb{R}^{p}\setminus |\Sigma'|$, then $\overline{C}=|\Sigma|$. In this notation we have that $$H^{0}(Z,\mathcal{O})=\{[\sum\limits_{I\in A}a_{I}t^{I},D]\mid\text{ the series converges in } D, A\subset \overline{C}^{\vee}\cap M \}.$$

Assume now that $X_{\Sigma}$ is an arbitrary (normal) toric variety, not necessarily smooth. We reduce this case to the smooth one. Let $\Sigma_{0}$ be a smooth subdivision of  $\Sigma$ and $\pi\colon X_{\Sigma_{0}}\to X_{\Sigma}$ be an equivariant resolution of singularities. Note that $$\pi^{-1}(Z)=\bigcup\limits_{\tau\in\Sigma_0,\relint(\tau)\in\inter(|\Sigma_0|)}O(\tau)\subset X_{\Sigma_0}.$$ Denote this set by $Z_{0}$.

For structure sheaves  $\mathcal{O}_{\Sigma}, \mathcal{O}_{\Sigma_{0}}$ of $X_{\Sigma}, X_{\Sigma_{0}}$, respectively, we have a natural mapping $$H^{0}(Z, \mathcal{O}_{\Sigma})\to H^{0}(Z_0, \mathcal{O}_{\Sigma_{0}}), [f, V]\to [f\circ\pi, \pi^{-1}(V)].$$
This is an isomorphism. Injectivity is obviuous, let us show its surjectivity. Consider a class $[g,U]\in H^{0}(Z_0, \mathcal{O}_{\Sigma_{0}})$, let $T_{N}$ be the torus in $X_{\Sigma}$ and $T_{N}'$ be the torus in $X_{\Sigma_0}$. There exists a neighborhood $V\subset X_{\Sigma}$ of $Z$ such that $\pi'=\pi|_{U\cap T_{N}'}\colon U\cap T_{N}'\cong V\cap T_{N}$. There exists $f\in H^{0}(T_{N}\cap V,\mathcal{O})$ such that $f\circ\pi'=g|_{T_{N}'\cap U}$. Since $f\circ\pi'$ is locally bounded in $U\cap V(\tau),\forall \tau\in\Sigma_{0}(1)$ and $\pi(\bigcup\limits_{\tau\in\Sigma_{0}(1)}V(\tau))=\bigcup\limits_{\tau\in\Sigma(1)}V(\tau)$, then $f$ is locally bounded in $V\cap V(\tau),\forall \tau\in\Sigma(1)$. By the Riemann extension theorem the function $f$ can be holomorphically extended to $V$. So, we have holomorphic functions $f\circ\pi \in H^{0}(\pi^{-1}(V),\mathcal{O}_{\Sigma_0})$ and $g\in H^{0}(U,\mathcal{O}_{\Sigma_0})$ such that $f\circ\pi\equiv g$ on $\pi^{-1}(V)\cap T_{N}'=U\cap T_{N}'$. By the uniqueness theorem we obtain $f\circ\pi\equiv g$ on $\pi^{-1}(V)\cap U$, therefore $[g,U]=[g|_{\pi^{-1}(V)\cap U},\pi^{-1}(V)\cap U]=[f\circ\pi, \pi^{-1}(V)\cap U]=[f\circ\pi, \pi^{-1}(V)]$. 

This proves the following 
  	\begin{lem}\label{mainlemm} Let $X_{\Sigma'}$ be a normal toric variety with the set $C:=\mathbb{R}^p\setminus |\Sigma'|$ being connected. Then
$$H^{1}_{c}(X_{\Sigma'},\mathcal{O})\cong\{[\sum\limits_{I\in A}a_{I}t^{I},D]\mid A\subset \overline{C}^{\vee}\cap M \}/\mathbb{C}.$$
where the series converges in some neighborhood $D\supset Z$ of the smooth variety $X_{\Sigma}$. 
	\end{lem}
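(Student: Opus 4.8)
The plan is to compute $H^1_c(X_{\Sigma'},\mathcal{O})$ by passing to an equivariant compactification and exploiting the vanishing of coherent cohomology on a complete toric variety. First I would invoke Corollary \ref{corequvcompact} to embed $X_{\Sigma'}$ as an open subset of a compact toric variety $X_{\Sigma''}$, so that the complement $Z:=X_{\Sigma''}\setminus X_{\Sigma'}$ is a compact $T_N$-invariant analytic set, connected by the connectedness of $C$. The long exact sequence of the pair from Theorem \ref{pairexact}, applied with $U=X_{\Sigma'}$ and $F=Z$, then relates the compactly supported cohomology of $X_{\Sigma'}$, of $X_{\Sigma''}$, and of $Z$.

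The key simplification is that on the compact space $X_{\Sigma''}$ compactly supported cohomology agrees with ordinary cohomology, and by Oda's vanishing theorem \cite[Corollary 2.9]{Oda} the group $H^q(X_{\Sigma''},\mathcal{O})$ equals $\mathbb{C}$ for $q=0$ and vanishes for $q\geq1$. Since $X_{\Sigma'}$ is non-compact we also have $H^0_c(X_{\Sigma'},\mathcal{O})=0$. Substituting these facts collapses the long exact sequence to the short exact sequence
\[
0\to\mathbb{C}\to H^0(Z,\mathcal{O})\to H^1_c(X_{\Sigma'},\mathcal{O})\to 0,
\]
which already yields $H^1_c(X_{\Sigma'},\mathcal{O})\cong H^0(Z,\mathcal{O})/\mathbb{C}$. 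It then remains to describe $H^0(Z,\mathcal{O})$ explicitly.

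For the explicit description I would first treat the smooth case. By Theorem \ref{closedandsection} a class in $H^0(Z,\mathcal{O})$ is represented by a function $f$ holomorphic on some neighborhood $V$ of $Z$, and since $Z$ together with a neighborhood lies in the chart-union $X_\Sigma$, one works there. In each affine chart $U_\sigma$ the trace $Z\cap U_\sigma$ is a union of coordinate subspaces, so Lemma \ref{mainlemm1} supplies a complete Reinhardt domain $W_\sigma$ with $Z\cap U_\sigma\subset W_\sigma\subset V\cap U_\sigma$; hence $f$ expands as a convergent power series on $W_\sigma$. Patching these series over the charts, which agree in the torus coordinates $t$, produces a single Laurent series $f=\sum_{I\in A}a_I t^I$ on a neighborhood $D\supset Z$, and Corollary \ref{mainlemm2} forces $A\subset|\Sigma|^\vee\cap M=\overline{C}^\vee\cap M$. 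This is precisely the claimed support condition.

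Finally I would remove the smoothness hypothesis by resolution of singularities. Taking a smooth subdivision $\Sigma_0$ of $\Sigma$ and the equivariant resolution $\pi\colon X_{\Sigma_0}\to X_\Sigma$ of Theorem \ref{resol}, the pullback $[f,V]\mapsto[f\circ\pi,\pi^{-1}(V)]$ defines a map $H^0(Z,\mathcal{O}_\Sigma)\to H^0(Z_0,\mathcal{O}_{\Sigma_0})$ with $Z_0=\pi^{-1}(Z)$. Injectivity is clear; surjectivity uses that the exceptional locus of $\pi$ is contained in $Z_0$ together with the normality identity $\pi_*\mathcal{O}_{\Sigma_0}\cong\mathcal{O}_\Sigma$, so every germ on $Z_0$ descends to $X_\Sigma$. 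Thus the smooth computation transfers verbatim, and combining it with the short exact sequence above proves the lemma. I expect the main obstacle to be the patching step in the smooth case: verifying that the per-chart Reinhardt expansions glue to one Laurent series and that local holomorphy across all charts is correctly encoded by the valuation conditions of Corollary \ref{mainlemm2}, yielding exactly the support $\overline{C}^\vee\cap M$.
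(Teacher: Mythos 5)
Your proposal follows the paper's own proof essentially step for step: the same equivariant compactification and long exact sequence of the pair $(X_{\Sigma''},Z)$ combined with Oda's vanishing theorem to obtain $H^1_c(X_{\Sigma'},\mathcal{O})\cong H^0(Z,\mathcal{O})/\mathbb{C}$, the same chart-by-chart Reinhardt-domain expansion via Lemma \ref{mainlemm1} and Corollary \ref{mainlemm2} to pin down the support $A\subset\overline{C}^{\vee}\cap M$ in the smooth case, and the same reduction of the normal case to the smooth one via equivariant resolution and $\pi_*\mathcal{O}_{\Sigma_0}\cong\mathcal{O}_{\Sigma}$. The argument is correct and matches the paper's approach.
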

	
	Recall that a set $A\subset \mathbb{R}^{n}$ with the property that for any point $x\in A$ and $\lambda\geq0$ one has $\lambda x\in A$ is called non-negatively homogeneous or simply a cone (not necessarily convex), see \cite[p. 25]{Boyd}). 
    \begin{lem}\cite[p. 53]{Boyd}\label{convgeo1}
Let $A$ be a non-negatively homogeneous set, then $$\overline{\conv(A)}=A^{\vee\vee}$$
In particular, if $A$ is closed then $\conv(A)=A^{\vee\vee}$.
    \end{lem}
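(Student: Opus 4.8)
This is the bipolar theorem for cones, and I would prove the identity $\overline{\conv(A)}=A^{\vee\vee}$ by establishing the two inclusions separately, the nontrivial one resting on the separating hyperplane theorem. I would first record the elementary facts valid for any subset $A\subset\mathbb{R}^{n}$: the dual $A^{\vee}=\{x:\langle x,y\rangle\ge0\ \forall y\in A\}$ is an intersection of closed half-spaces through the origin, hence a closed convex cone; the assignment $A\mapsto A^{\vee}$ is inclusion-reversing; and $A\subset A^{\vee\vee}$ holds automatically. Since $A$ is non-negatively homogeneous and (being nonempty) contains the origin, $\conv(A)$ is a convex cone and $K:=\overline{\conv(A)}$ is a closed convex cone containing $O$. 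The easy inclusion is then immediate: $A^{\vee\vee}$ is a closed convex set containing $A$, so it contains the closure of the convex hull, giving $\overline{\conv(A)}\subset A^{\vee\vee}$.

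For the reverse inclusion $A^{\vee\vee}\subset K$ I would argue by contraposition. Suppose $x\notin K$. As $K$ is closed and convex, the separating hyperplane theorem supplies a nonzero $y\in\mathbb{R}^{n}$ and a constant $c\in\mathbb{R}$ with $\langle y,x\rangle<c\le\langle y,z\rangle$ for all $z\in K$. Evaluating at $z=O\in K$ forces $c\le0$. The cone structure of $K$ then converts this affine inequality into a homogeneous one: for each $z\in K$ and each $\lambda>0$ we have $\lambda z\in K$, so $c\le\lambda\langle y,z\rangle$; dividing by $\lambda$ and letting $\lambda\to+\infty$ yields $\langle y,z\rangle\ge0$. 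Thus $\langle y,z\rangle\ge0$ for every $z\in K$, in particular for every $z\in A$, so $y\in A^{\vee}$. Since $\langle y,x\rangle<c\le0$, we get $\langle y,x\rangle<0$, whence $x\notin A^{\vee\vee}$. This proves $A^{\vee\vee}\subset K$, and combined with the first inclusion it gives the desired equality.

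I expect the \emph{main obstacle} to be exactly the passage from the strict affine separation of $x$ from $K$ to a bona fide element of the dual cone $A^{\vee}$: this is where the non-negative homogeneity of $A$ (equivalently, the cone property of $K$) is essential, both to discard the separating constant $c$ and to replace the affine inequality by the conic inequality $\langle y,\cdot\rangle\ge0$; without homogeneity one would only recover the closed convex \emph{hull} rather than the closed convex \emph{cone}. Finally, the concluding assertion is the specialization of the identity to the case in which $\conv(A)$ is already closed, so that $\conv(A)=\overline{\conv(A)}=A^{\vee\vee}$ and the closure becomes redundant; this is automatic in all our applications, where $A=\overline{C}=|\Sigma|$ is a finite union of polyhedral cones and the convex hull of finitely many finitely generated cones is again finitely generated, hence closed.
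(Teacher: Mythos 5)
Your argument is correct, and it is the standard bipolar-theorem proof; the paper itself gives no proof of this lemma (it is quoted from Boyd--Vandenberghe, p.~53), so there is nothing in the text to compare against beyond the citation. Both inclusions are handled properly: the easy one because $A^{\vee\vee}$ is a closed convex cone containing $A$, and the hard one by strict separation of a point $x\notin K=\overline{\conv(A)}$ from the closed convex cone $K$, with the homogeneity of $K$ used exactly where you say it is needed --- to force $c\le 0$ via $O\in K$ and to homogenize the separating functional into an element of $A^{\vee}$. One remark on the final clause: as you implicitly observe, the identity specializes under closedness of $\conv(A)$, not of $A$; for a non-convex closed cone the convex hull can fail to be closed (e.g.\ the union in $\mathbb{R}^{3}$ of the cone $z\ge\sqrt{x^{2}+y^{2}}$ with the ray through $(1,0,-1)$ has non-closed convex hull), so the lemma's ``in particular,'' read literally, is too strong. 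Your point that in every application here $A=\overline{C}=|\Sigma|$ is a finite union of polyhedral cones, whose convex hull is finitely generated and hence closed, is exactly the right way to dispose of this issue.
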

    
This gives us the following result
	
	\begin{thmA*}
	Let $X_{\Sigma'}$ be a $p$-dimensional toric variety with the fan $\Sigma'$. Assume that the complement of the fan's support  $C:=\mathbb{R}^{p}\setminus |\Sigma'|$ is connected, then  $H^{1}_{c}(X_{\Sigma'},\mathcal{O})=0$ if and only if $\conv(\overline{C})=\mathbb{R}^{p}$.
	\end{thmA*}	
	\begin{proof}
Since $\overline{C}$ is a non-negatively homogeneous set, by Lemma \ref{convgeo1} $\conv(\overline{C})=\overline{C}^{\vee\vee}$. Therefore $\conv(\overline{C})=\mathbb{R}^{p}$ if and only if $\overline{C}^{\vee}=O$. By Lemma \ref{mainlemm}, $\overline{C}^{\vee}=O$ if and only if $H^{1}_{c}(X_{\Sigma'},\mathcal{O})=0$.
	\end{proof}
	
\section{The Hartogs phenomenon}	
	
	\subsection{Toric Serre's theorem}
	
J.-P. Serre has formulated the cohomological condition for the Hartogs phenomenon in complex manifolds \cite{Serre}. We give a proof of this statement for toric varieties. First, recall the excision lemma \cite[pp. 52-53]{BanStan}
	
\begin{lem}\label{propmain2}
Let $X$ be a topological space. If $Y$ is an open subset in $X$ and $A$ is a closed subset of $Y$, then we have canonical isomorphisms $H^{n}_{A}(X,\mathcal{F})\cong H^{n}_{A}(Y,\mathcal{F})$ for any $n$.
\end{lem}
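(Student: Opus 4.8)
The plan is to reduce the statement to an isomorphism at the level of the section functors with supports and then transport it through a resolution. Write $\Gamma_A(X,\mathcal{G})$ for the group of global sections of a sheaf $\mathcal{G}$ on $X$ whose support is contained in $A$, and $\Gamma_A(Y,\mathcal{G}|_Y)$ for the analogous group on $Y$; by definition $H^n_A(X,\mathcal{F})$ and $H^n_A(Y,\mathcal{F})$ are the right derived functors of these two left-exact functors evaluated at $\mathcal{F}$ and $\mathcal{F}|_Y$ respectively. The heart of the matter is the natural isomorphism
$$\Gamma_A(X,\mathcal{G})\ \xrightarrow{\ \cong\ }\ \Gamma_A(Y,\mathcal{G}|_Y),$$
valid for every sheaf $\mathcal{G}$ on $X$.

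First I would establish this section-level isomorphism, given by restriction $s\mapsto s|_Y$. For injectivity, note that a section $s$ supported in $A\subset Y$ vanishes on the open set $X\setminus A$; since $X=Y\cup(X\setminus A)$, the restriction $s|_Y$ determines $s$. For surjectivity, given $t$ on $Y$ with support in $A$, I would define $s$ to equal $t$ on $Y$ and $0$ on $X\setminus A$; the two pieces agree on the overlap $Y\setminus A$, where $t$ vanishes, and glue to a global section whose support is that of $t$ and hence lies in $A$. Naturality in $\mathcal{G}$ is immediate from functoriality of restriction. The delicate point — the one I expect to be the main obstacle — is verifying that the extended section is genuinely a section of the \'etal\'e space over all of $X$, i.e. that the support of $t$ is closed in $X$ and not merely in $Y$, so that $Y$ and $X\setminus A$ really form an open cover on which the local pieces are defined. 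This is exactly where the hypothesis enters: the admissible supports are closed subsets of $A$, and in the situations of interest $A$ is closed in $X$ (for instance $A$ compact in a Hausdorff $X$), whence the $X$-closure of such a support stays inside $A\subset Y$ and therefore coincides with its $Y$-closure.

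Next I would promote this to cohomology. Choosing an injective (equivalently, for Bredon's framework, a flabby) resolution $0\to\mathcal{F}\to\mathcal{I}^\bullet$ on $X$, I would use that restriction to the open set $Y$ is exact and admits the exact left adjoint "extension by zero" $j_!$, so that $(-)|_Y=j^*$ preserves injectives; hence $0\to\mathcal{F}|_Y\to\mathcal{I}^\bullet|_Y$ is an admissible resolution on $Y$ computing $H^n_A(Y,\mathcal{F})$. Applying the section-level isomorphism degreewise then gives an isomorphism of complexes
$$\Gamma_A(X,\mathcal{I}^\bullet)\ \cong\ \Gamma_A(Y,\mathcal{I}^\bullet|_Y),$$
and passing to cohomology yields the canonical isomorphisms $H^n_A(X,\mathcal{F})\cong H^n_A(Y,\mathcal{F})$ for all $n$. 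Canonicity, and compatibility with the connecting homomorphisms of the long exact sequences of Theorems \ref{pairexact} and \ref{propmain1}, follow because each step is functorial in $\mathcal{F}$ and independent, up to canonical isomorphism, of the chosen resolution.
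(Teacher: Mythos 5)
Your argument is the standard derived-functor proof of excision and is correct; the paper itself gives no proof of Lemma~\ref{propmain2}, citing it directly from B\u anic\u a--St\u an\u a\c sil\u a, so there is nothing to contrast it with. The one genuine subtlety is the one you already isolate: with $A$ only assumed closed in $Y$, the family of supports $\{Z\subset A: Z \text{ closed in } X\}$ need not coincide with $\{Z\subset A: Z \text{ closed in } Y\}$, and both the gluing step in the surjectivity of $\Gamma_A(X,\mathcal{G})\to\Gamma_A(Y,\mathcal{G}|_Y)$ and the lemma itself require $A$ (or at least each admissible support) to be closed in $X$; this is satisfied in the paper's only application, where $A=K$ is a compact subset of a locally compact Hausdorff space. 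The remaining steps --- exactness of $j^{*}$, preservation of injectives via the exact left adjoint $j_{!}$, and passage to cohomology of the termwise-isomorphic complexes --- are all sound.
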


Now, we can prove toric Serre's theorem.
\begin{thmB*}
 	Let $X_{\Sigma}$ be a non-compact normal toric variety with the complement $\mathbb{R}^{p}\setminus |\Sigma|$ being connected. The cohomology group $H^{1}_{c}(X_{\Sigma},\mathcal{O})$ is trivial if and only if  $X_{\Sigma}$ admits the Hartogs phenomenon.
\end{thmB*}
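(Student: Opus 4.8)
The plan is to establish Theorem B by reducing the global Hartogs statement to a statement about the injectivity and surjectivity of a restriction map on sections, and then to encode that data in cohomology with compact supports via the exact sequence of Theorem~\ref{propmain1} together with the excision Lemma~\ref{propmain2} and the direct-limit identification of Proposition~\ref{exhlemm}. Concretely, the Hartogs phenomenon for $X_\Sigma$ asserts that for every domain $D$ and every compact $K\subset D$ with $D\setminus K$ connected, the restriction $H^0(D,\mathcal{O})\to H^0(D\setminus K,\mathcal{O})$ is an isomorphism. I would first record the standard observation that this restriction is always injective when $D\setminus K$ is connected (two holomorphic functions agreeing on the nonempty open set $D\setminus K$ agree on $D$ by the identity principle on the irreducible normal space), so the entire content of the phenomenon is the surjectivity, i.e.\ every $f\in H^0(D\setminus K,\mathcal{O})$ extends across $K$.

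Next I would translate surjectivity into a cohomological vanishing statement using the exact sequence of Theorem~\ref{propmain1} applied to the pair $(D,K)$:
\begin{equation*}
\xymatrix@C=0.5cm{
0 \ar[r] & H^{0}_{K}(D,\mathcal{O}) \ar[r] & H^{0}(D,\mathcal{O}) \ar[r] & H^{0}(D\setminus K,\mathcal{O}) \ar[r] & H^{1}_{K}(D,\mathcal{O}) \ar[r] & \cdots }
\end{equation*}
Since $K$ is compact and $D\setminus K$ is connected one has $H^0_K(D,\mathcal{O})=0$ (a holomorphic function on $D$ with compact support vanishes on the unbounded connected piece, hence everywhere), which re-proves injectivity; and the restriction map is surjective precisely when its cokernel injects trivially into $H^{1}_{K}(D,\mathcal{O})$, i.e.\ when the connecting map $H^0(D\setminus K,\mathcal{O})\to H^1_K(D,\mathcal{O})$ is zero. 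By Lemma~\ref{propmain2} (excision), $H^1_K(D,\mathcal{O})\cong H^1_K(X_\Sigma,\mathcal{O})$ since $D$ is open in $X_\Sigma$ and $K$ is closed in $D$, so the obstruction lives in a group attached to the whole variety rather than to the particular domain. The crucial step is then to pass from the individual groups $H^1_K(X_\Sigma,\mathcal{O})$ to their direct limit over an exhausting family of compacta and to invoke Proposition~\ref{exhlemm}, which gives
$$\mathop{\underrightarrow{\lim}}_{K}H^{1}_{K}(X_\Sigma,\mathcal{O})\cong H^{1}_{c}(X_\Sigma,\mathcal{O}).$$

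For the direction ``$H^1_c=0 \Rightarrow$ Hartogs'' I would argue that the vanishing of the limit forces every class in each $H^1_K(X_\Sigma,\mathcal{O})$ to die in some larger $H^1_{K'}(X_\Sigma,\mathcal{O})$; using the exhaustion from the Lemma in Section~\ref{exhsubsect}, whose members $V_n$ have connected complements, one arranges that the obstruction to extending $f$ across $K$ becomes trivial after enlarging the domain, and an approximation/uniqueness argument then patches these local extensions into a genuine element of $H^0(D,\mathcal{O})$ restricting to $f$. For the converse ``Hartogs $\Rightarrow H^1_c=0$'' I would take an arbitrary class in $H^1_c(X_\Sigma,\mathcal{O})$, represent it (by the limit identification) as an obstruction in some $H^1_K(X_\Sigma,\mathcal{O})$, realize it via the connecting map applied to a suitable $f\in H^0(X_\Sigma\setminus K,\mathcal{O})$, and use the Hartogs extendability of $f$ to conclude the connecting map sends it to zero, so the class vanishes. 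The main obstacle I anticipate is precisely the commutativity of taking direct limits with the exactness of Theorem~\ref{propmain1}: one must check that the connecting maps are compatible across the exhaustion and that ``being zero in the limit'' is equivalent to ``extendability for every sufficiently large domain,'' which is where the connectedness of the complement of $\Sigma$—guaranteeing, via the exhaustion lemma, compacta $V_n$ with connected complement $X_\Sigma\setminus V_n$—is indispensable for keeping the restriction maps injective and the direct system well-behaved.
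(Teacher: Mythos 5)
Your skeleton (the exact sequence of Theorem~\ref{propmain1}, excision via Lemma~\ref{propmain2}, and the identification $H^1_c\cong\varinjlim H^1_{V_n}$ from Proposition~\ref{exhlemm}, with the exhaustion by compacta having connected complements) is the same as the paper's, but the two steps you leave vague are exactly where the real work lies, and as sketched they do not go through. The missing ingredient in both directions is the equivariant toric compactification $X_{\Sigma'}\supset X_{\Sigma}$ together with the vanishing theorem $H^{q}(X_{\Sigma'},\mathcal{O})=0$ for $q\geq1$ and $H^{0}(X_{\Sigma'},\mathcal{O})=\mathbb{C}$, which you never invoke.

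In the forward direction, vanishing of the direct limit only tells you that a class in $H^1_K(X_\Sigma,\mathcal{O})$ dies in some larger $H^1_{V_n}(X_\Sigma,\mathcal{O})$; for a domain $D\subsetneq X_\Sigma$ the compact $V_n$ need not lie in $D$, so ``the obstruction becomes trivial after enlarging'' cannot be excised back to the pair $(D,K)$, and your ``approximation/uniqueness argument'' has no obvious content. The paper instead proves the much stronger statement that \emph{each individual} group $H^1_{V_n}(X_\Sigma,\mathcal{O})$ vanishes: the compactification yields short exact sequences $0\to\mathbb{C}\to H^{0}(X_{\Sigma'}\setminus V_n,\mathcal{O})\to H^{1}_{V_n}(X_\Sigma,\mathcal{O})\to0$, the vanishing of the limit forces $\varinjlim H^{0}(X_{\Sigma'}\setminus V_n,\mathcal{O})=\mathbb{C}$, and the identity theorem on the \emph{connected} open sets $X_{\Sigma'}\setminus V_n$ upgrades ``eventually constant'' to ``constant,'' whence $H^{0}(X_{\Sigma'}\setminus V_n,\mathcal{O})=\mathbb{C}$ and $H^1_{V_n}=0$ for every $n$; the case of an arbitrary compact $K\subset V_n$ then follows by injectivity of restriction. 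In the converse direction your plan to ``realize an arbitrary class via the connecting map applied to some $f\in H^{0}(X_\Sigma\setminus K,\mathcal{O})$'' is unjustified on $X_\Sigma$ itself: the sequence continues $\cdots\to H^{1}_{K}(X_\Sigma,\mathcal{O})\to H^{1}(X_\Sigma,\mathcal{O})\to\cdots$, and since $H^{1}(X_\Sigma,\mathcal{O})$ need not vanish for noncompact $X_\Sigma$, the connecting map need not be surjective, so Hartogs extendability only kills the image of the connecting map, not all of $H^1_K$. The paper works on $X_{\Sigma'}$, where $H^{1}(X_{\Sigma'},\mathcal{O})=0$ makes the connecting map surjective, and shows $H^{0}(X_{\Sigma'}\setminus K,\mathcal{O})=\mathbb{C}$ by gluing a Hartogs extension on $D$ with $f$ on $X_{\Sigma'}\setminus K$ to obtain a global, hence constant, function on the compact variety. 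You should rebuild both halves of your argument around the compactification.
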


\begin{proof}
By \eqref{exhsubsect} there exists an exhaustion of $X_{\Sigma}$ by compact sets $\{V_{n}\}$ such that $X_{\Sigma}\setminus V_{n}$ is connected. By Lemma \ref{exhlemm} it follows that $H^{1}_{c}(X_{\Sigma},\mathcal{O})\cong \mathop{\underrightarrow{\lim}}\limits_{V_n}H^{1}_{V_n}(X_{\Sigma},\mathcal{O})$.

Suppose that $H^{1}_{c}(X_{\Sigma},\mathcal{O})=0$. Let $X_{\Sigma'}$ be a toric compactification of $X_{\Sigma}$. By vanishing theorem for compact toric varieties \cite[Corollary 2.9]{Oda} and Theorem \ref{propmain1}, we obtain a short exact sequence for any $n$: 

\begin{multline*}
\xymatrix@C=0.5cm{
  0 \ar[r] & \mathbb{C} \ar[rr] && H^{0}(X_{\Sigma'}\setminus V_{n},\mathcal{O}) \ar[rr] && H^{1}_{V_n}(X_{\Sigma},\mathcal{O}) \ar[r] & 0 }.
\end{multline*}

Using homological algebra machinery (see, e.g. \cite[Section 4, \S1]{Demailly}), we get the following commutative diagram  

	\begin{equation*}
\begin{CD}\phantom{gggg} @VVV @VVV @VVV\\
0 \to\mathbb{C} @>>> H^{0}(X_{\Sigma'}\setminus V_{n-1},\mathcal{O}) @>>>  H^{1}_{V_{n-1}}(X_{\Sigma},\mathcal{O})\to 0\\
\phantom{gggg} @V{id_{n-1}}VV @V{p_{n-1}}VV @V{q_{n-1}}VV\\
0 \to\mathbb{C} @>>> H^{0}(X_{\Sigma'}\setminus V_{n},\mathcal{O}) @>>>  H^{1}_{V_n}(X_{\Sigma},\mathcal{O})\to 0\\
\phantom{gggg} @V{id_{n}}VV @V{p_{n}}VV @V{q_n}VV \\
0 \to \mathbb{C} @>>> H^{0}(X_{\Sigma'}\setminus V_{n+1},\mathcal{O}) @>>>  H^{1}_{V_{n+1}}(X_{\Sigma},\mathcal{O}) \to 0 \\
\phantom{gggg} @VVV @VVV @VVV
\end{CD}.
	\end{equation*}
	
Here $id_n$ is the identity homomorphism, $p_{n}$ is the restriction homomorphism and $q_{n}$ is the homomorphism induced by the embedding $V_{n}\subset V_{n+1}$. 

Since $\mathop{\underrightarrow{\lim}}\limits_{V_n}H^{1}_{V_n}(X_{\Sigma},\mathcal{O})=0$, taking direct limits of the diagram, we obtain $\mathop{\underrightarrow{\lim}}\limits_{V_n}H^{0}(X_{\Sigma'}\setminus V_{n},\mathcal{O})\cong\mathbb{C}$. Also we have $H^{0}(X_{\Sigma'}\setminus V_{n},\mathcal{O})= H^{0}(U_{n},\mathcal{O})$, where $Z:=X_{\Sigma'}\setminus X_{\Sigma}$ and  $U_{n}:=X_{\Sigma'}\setminus V_{n}$ is a connected neighborhood of $Z$. 

Now, let $f\in H^{0}(U_{n},\mathcal{O})$. Since $\mathop{\underrightarrow{\lim}}\limits_{U_n}H^{0}(U_{n},\mathcal{O})\cong\mathbb{C}$, we have $f\equiv const$ in $U_n$ by the uniqueness theorem. It follows that $H^{0}(U_{n},\mathcal{O})=\mathbb{C}$ and for any compact set $V_{n}$ we obtain $H^{1}_{V_n}(X_{\Sigma},\mathcal{O})=0$. 

Let $K$ be a compact set in the domain $D$ such that $D\setminus K$ is connected. There exists a compact set $V_n$ such that $K\subset V_n$. It follows that $$H^{0}(X_{\Sigma'}\setminus K,\mathcal{O})\subset H^{0}(X_{\Sigma'}\setminus V_{n},\mathcal{O}).$$ Therefore $H^{1}_{K}(X_{\Sigma},\mathcal{O})=0$, and by Lemma \ref{propmain2} we obtain $H^{1}_{K}(D,\mathcal{O})=0$. By Theorem \ref{propmain1} the restriction map $H^{0}(D,\mathcal{O})\to H^{0}(D\setminus K,\mathcal{O})$ is an isomorphism, i.e. the Hartogs phenomenon holds in  $X_{\Sigma}$.

Now, suppose that the Hartogs phenomenon holds in  $X_{\Sigma}$, i.e. for any domain $D\subset X_{\Sigma}$ and for any compact set $K\subset D$ such that $D\setminus K$ is connected, the restriction map $H^{0}(D,\mathcal{O})\to H^{0}(D\setminus K,\mathcal{O})$  is an isomorphism.  

We have a commutative diagram where all morphisms are restriction maps: 
	
	\begin{equation*}
\begin{CD}
H^{0}(X_{\Sigma'}, \mathcal{O}) @>>> H^{0}(X_{\Sigma'}\setminus K,\mathcal{O}) \\
@VVV @VVV\\
H^{0}(D, \mathcal{O}) @>>> H^{0}(D\setminus K,\mathcal{O})
\end{CD}
	\end{equation*}
	
By assumption, the lower arrow of the diagram is an isomorphism. Let $f\in H^{0}(X_{\Sigma'}\setminus K,\mathcal{O})$ be a holomorphic function, and let $h:=f|_{D\setminus K}$. There exists $g\in H^{0}(D, \mathcal{O})$ such that $g\equiv h$ on $D\setminus K$. Since $X_{\Sigma'}=(X_{\Sigma'}\setminus K)\cup D$ and $D\setminus K=(X_{\Sigma'}\setminus K)\cap D$, there exists $F\in H^{0}(X_{\Sigma'}, \mathcal{O})$ such that $F\equiv f$ on $X_{\Sigma'}\setminus K$ and $F\equiv g$ on $D$. But $H^{0}(X_{\Sigma'}, \mathcal{O})=\mathbb{C}$, therefore $H^{0}(X_{\Sigma'}\setminus K,\mathcal{O})=\mathbb{C}$ and $H_{K}^{1}(X_{\Sigma},\mathcal{O})=0$. 
	
Now we take an exhaustion of $X_{\Sigma}$ by compact sets $\{V_{n}\}$ such that each $X_{\Sigma}\setminus V_{n}$ is connected. Replacing $K$ by $V_n$ in above, we obtain $H_{V_{n}}^{1}(X_{\Sigma},\mathcal{O})=0$. By Lemma \ref{exhlemm} it follows that $H^{1}_{c}(X_{\Sigma},\mathcal{O})\cong \mathop{\underrightarrow{\lim}}\limits_{V_n}H^{1}_{V_n}(X_{\Sigma},\mathcal{O})=0$.
\end{proof}

	\subsection{The Hartogs phenomenon for normal toric varieties}

We can now prove our main result from which Marciniak's conjecture follows.

	\begin{thmC*}
Let $X_{\Sigma}$ be a normal non-compact toric variety with the fan $\Sigma$ whose complement is $\mathbb{R}^{p}\setminus |\Sigma|=\bigsqcup\limits_{j=1}^{n}C_{j}$. Then
	\begin{itemize}
	\item if at least one of $C_j$'s is concave then $X_{\Sigma}$ admits the Hartogs phenomenon.
	\item if $n=1$ then the converse is also true, i.e. if $X_{\Sigma}$ admits the Hartogs phenomenon then $\mathbb{R}^p \setminus |\Sigma|$ is concave.
	\end{itemize}  
	\end{thmC*}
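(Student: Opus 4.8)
The plan is to reduce the multi-component case to the connected case already settled by Theorems A and B, exploiting a simple but decisive observation: the Hartogs phenomenon is inherited by open subvarieties. Concretely, I would first record that if $X_{\Sigma}$ is an open (connected) subvariety of a toric variety $X_{\Sigma^{*}}$ that admits the Hartogs phenomenon, then $X_{\Sigma}$ admits it as well. Indeed, any domain $D\subseteq X_{\Sigma}$ together with a compact $K\subset D$ such that $D\setminus K$ is connected is automatically a domain-with-compact in the larger variety $X_{\Sigma^{*}}$, and the restriction homomorphism $H^{0}(D,\mathcal{O})\to H^{0}(D\setminus K,\mathcal{O})$ depends only on $D$ and $K$, not on the ambient space. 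Thus the isomorphism guaranteed on $X_{\Sigma^{*}}$ is exactly the one required for $X_{\Sigma}$, and since this holds for every admissible pair $(D,K)$, the phenomenon passes to $X_{\Sigma}$ for free.

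For the first bullet, suppose $C_{1}$ is concave, i.e.\ $\conv(\overline{C_{1}})=\mathbb{R}^{p}$. I would complete $\Sigma$ to a complete fan $\widehat{\Sigma}\supseteq\Sigma$ using Corollary \ref{corequvcompact} and then trim it exactly as in the construction preceding Lemma \ref{mainlemm}, setting $\Sigma^{*}:=\{\tau\in\widehat{\Sigma}\mid\tau\subseteq\mathbb{R}^{p}\setminus C_{1}\}$. Because $|\Sigma|$ is a union of cones of the subfan $\Sigma\subseteq\widehat{\Sigma}$, no cone of $\widehat{\Sigma}$ can straddle the boundary of $C_{1}$, so every cone of $\widehat{\Sigma}$ has relative interior lying either in $|\Sigma|$ or in a single complement component; this yields $\mathbb{R}^{p}\setminus|\Sigma^{*}|=C_{1}$ and $\Sigma\subseteq\Sigma^{*}$. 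Hence $X_{\Sigma^{*}}$ is a normal non-compact toric variety whose fan complement is the connected concave set $C_{1}$, and $X_{\Sigma}$ is an open subvariety of $X_{\Sigma^{*}}$. Applying Theorem A gives $H^{1}_{c}(X_{\Sigma^{*}},\mathcal{O})=0$; Theorem B then shows $X_{\Sigma^{*}}$ admits the Hartogs phenomenon; and the inheritance observation finally transfers it to $X_{\Sigma}$.

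For the second bullet, when $n=1$ the complement $\mathbb{R}^{p}\setminus|\Sigma|=C_{1}$ is already connected, so Theorems A and B apply directly to $\Sigma$ itself. If $X_{\Sigma}$ admits the Hartogs phenomenon, Theorem B gives $H^{1}_{c}(X_{\Sigma},\mathcal{O})=0$, and Theorem A then forces $\conv(\overline{C_{1}})=\mathbb{R}^{p}$, that is, $C_{1}$ is concave. Together with the first bullet (specialised to $n=1$) this establishes the equivalence in the connected case.

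The step I expect to be the main obstacle is the fan-theoretic construction of $\Sigma^{*}$: one must verify with care that a completion can be chosen containing $\Sigma$ as a subfan and that discarding the cones meeting $C_{1}$ leaves a genuine fan whose support has complement precisely $C_{1}$ (in particular, connected). This mirrors the trimming already performed in the computation of $H^{1}_{c}(X_{\Sigma'},\mathcal{O})$, so I anticipate it to be routine; once it is in place, the proof is just a chain of invocations of Theorems A and B combined with the open-inclusion inheritance of the Hartogs phenomenon.
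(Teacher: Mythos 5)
Your proof is correct and follows essentially the same route as the paper: embed $X_{\Sigma}$ into a toric variety whose fan complement is exactly the concave component $C_{1}$, apply Theorems A and B there, and transfer the Hartogs phenomenon to the open subvariety $X_{\Sigma}$; the $n=1$ converse is likewise the same direct application of Theorems B and A. The only difference is that you spell out the fan-theoretic construction of $\Sigma^{*}$ and the open-inclusion inheritance, both of which the paper leaves implicit, and your verifications of these points are sound.
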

	
	\begin{proof}

Let $C_{1}$ be a concave component. The toric variety $X_{\Sigma}$ can be embedded into a toric variety $X_{\Sigma'}$ such that $\mathbb{R}^{p}\setminus |\Sigma'|=C_{1}$. By Theorem A we have $H^{1}_{c}(X_{\Sigma'},\mathcal{O})=0$. Therefore, by toric Serre's theorem (Theorem B), the Hartogs phenomenon holds in $X_{\Sigma'}$. Since $X_{\Sigma}\subset X_{\Sigma'}$ is an open subvariety, this is true in $X_{\Sigma}$ also. 

If $n=1$ then by toric Serre's theorem, $H^{1}_{c}(X_{\Sigma},\mathcal{O})=0$. It remains to apply Theorem A to finish the proof.
	\end{proof}


\end{document}